   \newtheorem{Theorem}{Theorem}
   \newtheorem{Proposition}{Proposition}
   \newtheorem{Lemma}{Lemma}
   \newtheorem{Corollary}{Corollary}
   \newtheorem{Remark}{Remark}
   \def\calFactory#1{%
   \expandafter\def\csname c#1\endcsname{\mathcal{#1}}}
   \def\frakFactory#1{%
   \expandafter\def\csname k#1\endcsname{\mathfrak{#1}}}
   \def\bbFactory#1{%
   \expandafter\def\csname b#1\endcsname{\mathbb{#1}}}
   \def\bfFactory#1{%
   \expandafter\def\csname f#1\endcsname{\mathbf{#1}}}
   \newcounter{ctr}
   \edef\X{\@Alph\c@ctr}%
   \edef\x{\@alph\c@ctr}%
   \newcommand{\mathset}[1]{\mathbb{#1}}
   \newcommand{\N}{\mathset{N}}
   \newcommand{\R}{\mathset{R}}
   \newcommand{\ER}{\mathset{R}}
   \newcommand{\F}{\mathcal{F}}
   \newcommand{\mathproba}[1]{\mathbf{#1}}
   \newcommand{\e}{\mathproba{E}}
   \newcommand{\p}{\mathproba{P}}
   \newcommand{\HUN}{\mathbf{(H_1)}}
   \newcommand{\HDEUX}{\mathbf{(H_2)}}
   \newcommand{\HTROIS}{\mathbf{(H_3)}}
   \newcommand{\Lip}{\mathrm{Lip}}
   \newcommand{\xx}{{x_0}}
   \newcommand{\betaH}{{\beta_H}}
   \newcommand{\ah}{{a_H}}
   \renewcommand{\x}{\xx}
   \newcommand{\kkk}{\kK}
   \newcommand{\constPV}{\kC}
   \newcommand{\LK}{\kL_K}
   \DeclareMathOperator*{\argmin}{arg\,min}
  \author{Karine Bertin\footnote{CIMFAV-INGEMAT, Universidad de Valparaiso, General Cruz 222, Valpara\'iso, Chile.
   E-mail: \texttt{karine.bertin@uv.cl}},
       Nicolas Klutchnikoff\footnote{Universit\'e de Rennes, CNRS, IRMAR~--~UMR 6625, F-35000 Rennes, France,
   E-mail: \texttt{nicolas.klutchnikoff@univ-rennes2.fr}},
        Fabien Panloup\footnote{Laboratoire  angevin  de  recherche  en  math\'ematiques  (LAREMA),  Universit\'e d'Angers, CNRS, 49045 Angers Cedex 01, France, E-mail: \texttt{fabien.panloup@univ-angers.fr}},
         Maylis Varvenne\footnote{ Institut de Math\'ematiques de Toulouse (IMT), Universit\'e de Toulouse 1 Capitole, 2 Rue du Doyen-Gabriel-Marty, 31042 Toulouse, France.  E-mail: \texttt{maylis.varvenne@math.univ-toulouse.fr; ~maylis.varvenne@ut-capitole.fr}}
}
 \title{Adaptive estimation of the stationary density of a stochastic differential equation driven by a fractional Brownian motion
   }
\begin{document}
\maketitle

\begin{abstract}
We build and study a data-driven procedure for the estimation of the stationary density $f$ of an additive fractional SDE. To this end, we also prove some new concentrations bounds for discrete observations of such dynamics in stationary regime.\end{abstract}
\bigskip
\noindent \textit{Keywords}: Fractional Brownian motion; Non-parametric Inference; Stochastic Differential Equation; Stationary density; Rate of convergence; Adaptive density estimation.

\medskip
\noindent \textit{AMS classification (2010)}: 60G22, 60H10, 62M09.





%
%


   \maketitle


   \section{Introduction}\label{sec:introduction}
   We consider the $\R^d$-random process $X=(X_t : t\geq 0)$ governed by stochastic differential equation
   \begin{equation}\label{modeldiffusion}
      X_t = X_0 + \int_0^t b(X_s) ds + \sigma B^H_t,
      \quad t\geq 0,
   \end{equation}
   where $X_0$ is the initial value of $X$, $b\colon \R^d\to \R^d$ is a continuous function, $\sigma$ is a constant $d\times d$ matrix and $B^H = (B_t^H\colon t\geq 0)$ is a $d$-dimensional two-sided fractional Brownian motion with Hurst parameter $0<H<1$.
   Even in this non-markovian framework (if $H\neq 1/2$), the process $X$ can be embedded into an infinite Markovian structure \citep[see][]{hairer}. This allows us to define, under usual assumptions on the coefficients $b$ and $\sigma$, a \emph{unique} invariant distribution of  $X$ which admits a density $f:\R^d\to\R$.
   In this paper we are interested in the non-parametric  estimation of $f$  based on the observation of $X$ at $n$ equally spaced sampling times $t_1 = \Delta_n, \dots, t_n=n\Delta_n$ {where $\Delta_n$ is a non-increasing positive sequence such that $n\Delta_n\to\infty$.}

   In the case of diffusion models driven by standard Brownian motion ($H=\frac{1}{2}$), the problem of non parametric estimation of the invariant density  has been extensively studied, in both discrete and continuous time. In the continuous time framework, the process $X$ is observed for all $0\leq t \leq T$. \Citet{castellana1986smoothed} proved that, under some specific assumption on the joint density of $(X_0, X_t)$, the parametric rate of convergence $T^{-1/2}$ can be reached.  Among other, see also \citet{bosq1997parametric}, \citet{kutoyants1998efficient}, \citet{dalalyan2001estimation}, \citet{comte2005super} and \citet{bosq2012nonparametric}. Without this specific assumption, classical non-parametric rates of convergence of the form $T^{-s/(2s+1)}$ can be obtained \citep[see][]{comte2002adaptive} where $s$ is the smoothness parameter of the function $f:\R\to \R$.
   The case of discrete observations (which corresponds to our framework) has been studied in a univariate setting in \citet{tribouley1998l_p},  \citet{comte2002adaptive}  and \citet{schmisser2013nonparametric} for integrated risk and in \citet{MR3638047} for pointwise risk.  In these papers, the rate of convergence is proved to depend only on $T = n\Delta_n$ and (adaptive) minimax rates of convergence are of the form $(n\Delta_n)^{-s/(2s+1)}$ (up a a logarithmic term) where $s$ is the smoothness of the density function. See also \citet{bertin2018adaptive} that consider integrated risk in a multivariate setting.
   When $H\neq 1/2$, nonparametric estimation methods for the model~\eqref{modeldiffusion} have mainly  focused on estimation of the drift term $b$ on the continuous case, see  e.g. \citet{mishra2011nonparametric} (where the authors study the consistency and the rate of convergence of a nonparametric estimator of the whole trend of the solution to a fractional SDE) and \citet{comte2018nonparametric} (where the authors the consistency of some Nadaraya-Watson's-type estimators of the drift function in a fractional SDE).  Note that these papers only consider the case $H>1/2$ in the continuous case.

   \noindent Our goal in this paper is to construct a data-driven procedure to estimate the stationary density $f$ of $X$ in the discrete case for both $H<1/2$ and $H>1/2$. To this aim, new concentration inequalities are obtained for the ``stationary'' process, following the strategy of \citet{varvenne2019concentration}. In this paper, the idea was to use a pathwise interpretation of  the concentration phenomenon by studying the distance between a functional and its average as a sum of differences of ``conditioned paths''. Then, the result was obtained by making use of the  contraction properties of the dynamics (under  strong convexity assumptions). In our paper, the novelty with respect to this paper is to assume that $\Delta_n$ may depend on $n$ but mostly, that the process is observed in its stationary regime (instead of starting from a given $x$ like in \cite{varvenne2019concentration}). Actually, if this modification is easy to overcome in a Markovian setting, here, this is not the case since at time $0$, the process has already a past. In other words, an invariant distribution of \eqref{modeldiffusion} is  a probability on $\ER^d\times{\cal W}$ where ${\cal W}$ is a functional space (see Section \ref{sec:modell} for details). In short, proving concentration bounds in stationary regime requires to strongly modify the original proof given in \cite{varvenne2019concentration} (see Section \ref{subsec:sketch} for more detailed explanations).

   \noindent These tools are used for two purposes. First we obtain rates of convergence for the pointwise risk of classical kernel estimators assuming that $f$ belongs to a H\"older class with a known smoothness parameter $s=(s_1,\dots,s_d)\in(0,\infty)^d$. More precisely, choosing adequately a bandwidth that depends on $s$, we obtain the rate $\phi_n(s)=(n\Delta_n)^{-\betaH\gamma(\mathbf{s})}$ where
   \begin{equation}\label{eq:def-gamma-beta}
      \gamma(\mathbf{s}) = \frac{\bar s}{2\left(1+\frac1{\min_j s_j}\right)\bar s +2}
      \qquad\text{and}\qquad
      \betaH = 2 - \max(2H, 1).
   \end{equation}
   Here $\bar s=\left( \sum_{i=1}^d 1/s_i\right)^{-1}$ denotes a classical parameter in multivariate nonparametric estimation that can be viewed as the effective smoothness of $f$.
   Next, we propose a data-driven procedure based on the ideas developed by Goldenshluger and Lepski \citep[see][and references therein]{goldenshluger2011bandwidth,goldenshluger2014adaptive} to select the bandwidth. The concentration tools we develop in this paper allow us to prove an oracle-type inequality. This ensures that our data-driven procedure performs almost as well as the best estimator in a given family of estimators. As a direct consequence, our procedure is proved to be adaptive: assuming that $f$ is H\"older with unknown smoothness $s$,  it converges at the rate $\phi_n(s)$ up to a $\log (n\Delta_n)$ factor.

   {The paper is organized as follows. We first present the model and the new concentration inequalities in Section~\ref{sec:mod}. We introduce the statistical framework in Section~\ref{sec:adapt}. Section~\ref{sec:estim} is devoted to the description of our estimation procedures and their theoretical properties are stated in Section~\ref{sec:results}. The proofs are postponed to Section~\ref{sec:proofs1} (for the concentration inequalities) and~\ref{sec:proofs2} (for the properties of statistical procedures).}

   \section{Model and Probabilistic background}\label{sec:mod}

   \subsection{Model}\label{sec:modell}

   We recall that  in the non-Markovian setting given by~\eqref{modeldiffusion}, the well definition of ``the'' invariant distribution of the process $X$ requires the embedding of the dynamics into an infinite-dimensional Markovian structure. More precisely,  the Markovian process above the dynamics, called \textit{Stochastic Dynamical System} (SDS) can be realized as a map on the space
   $\mathbb{R}^d\times {\mathcal W}$ where ${\mathcal W}$ denotes an appropriate space of H\"older functions from $(-\infty,0]$ to $\mathbb{R}^d$, equipped with the Wiener measure. This construction is strongly based on the Mandelbrot Van-Ness representation of the fBm:
   \begin{equation}\label{eq:mandelbrot}
   \forall t\in\R,\quad B_t^H=c_H\int_{\R}(t-s)_+^{H-1/2}-(-s)_+^{H-1/2} {\rm d}W_s,
   \end{equation}
   where $(W_t)_{t\in\R}$ is a two-sided $d$-dimensional Brownian motion and $c_H>0$. We denote {by $({\mathcal Q}_t(x,w))_{t\ge0,(x,w)\in\ER^d\times{\mathcal W}}$}  the related semi-group (for details on regularity properties of the SDS, see \cite{hairer}).

   \noindent {For this type of dynamics, an \textit{initial condition} is given by a couple $(X_0,W^-)$, where $W^-=(W_t)_{t\le0}$ and $X_0\in\ER^d$. In other words, an \textit{initial condition} is a distribution $\mu$ on $\ER^d\times {\mathcal W}$ such that the projection on the second coordinate is $\mathproba{P}_{W^-}$.}

   \noindent {Then, an invariant distribution $\nu$ for {$({\mathcal Q}_t(x,w))_{t\ge0,(x,w)\in\ER^d\times{\mathcal W}}$} is an initial condition which is such that the distribution $\mathproba{P}_{X^\nu}$ of the process $(X_t^\nu)_{t\ge0}$ built with this initial condition is invariant by a time-shift. We say that the invariant distribution is unique if $\mathproba{P}_{X^\nu}$ is unique. Finally, if the invariant distribution exists, we will denote by $\bar{\nu}$,  its first marginal: $\bar{\nu}(dx)=\int_{\mathcal W}\nu(dx,dw)$. Such a distribution (on $\ER^d$) will be usually called ``marginal invariant distribution''.
   We will denote by $f$ the density of $\bar{\nu}$ with respect to the Lebesgue measure on $\ER^d$ (denoted by $\lambda_d$ in the sequel) {when exists}.
   }
   In Proposition \ref{pro:smoothinvariant} below, we recall some {sufficient} conditions which ensure existence, uniqueness of the invariant distribution and absolute continuity of $\bar{\nu}$ with respect to the Lebesgue measure. To this end, let us first state the assumptions used throughout our paper:

   \noindent $\HUN$ \textit{(stability)}
   The function $b:\R^d\rightarrow\R$ is continuous and
   there exists a constant $\alpha>0$ such that:
   \noindent
   For every $x,y\in\R^{d}$, we have
   \begin{equation*}
      \langle  b(x)-b(y), \, x-y\rangle \le - \alpha |x-y|^2
   \end{equation*}

   \noindent $\HDEUX$ \textit{(strong regularity)} For every $x,y\in\R^{d}$,
   \begin{equation*}
   |b(x)-b(y)|\leq L|x-y|.
   \end{equation*}

   \noindent $\HTROIS$ \textit{(nondegeneracy)} The matrix $\sigma$ is invertible.

   \begin{Proposition}\label{pro:smoothinvariant}
      {Assume $\HUN$ and $\HDEUX$. Then, existence holds for the invariant distribution $\nu$. If $\HTROIS$ is also fulfilled, then uniqueness holds for $\nu$ \citep[unique in the sense of][]{hairer}.} Furthermore,  {if $b$ is ${\cal C}^1$}, then the marginal invariant distribution $\bar{\nu}$ admits a density $f$ with respect to $\lambda_d$.
   \end{Proposition}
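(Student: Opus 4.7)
The plan is to deduce existence and uniqueness of the invariant distribution from Hairer's theory~\cite{hairer}, while the existence of a density for $\bar\nu$ is obtained via Malliavin calculus for fractional SDEs. I would organize the proof in three steps matching the three claims of the statement.

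\emph{Step 1 (existence).} For a fixed realization of the Brownian past $W^-$, assumption $\HDEUX$ guarantees strong existence and uniqueness of the solution $X^{s,x_0}$ of~\eqref{modeldiffusion} started from $x_0 \in \R^d$ at time $s \le 0$. Applying the chain rule to $|X^{s,x_0}_t - X^{s,y_0}_t|^2$ with both solutions driven by the same noise, the $\sigma B^H$ contributions cancel and $\HUN$ yields the pathwise contraction
\begin{equation*}
|X^{s,x_0}_t - X^{s,y_0}_t|^2 \le e^{-2\alpha(t-s)}|x_0-y_0|^2, \qquad t \ge s.
\end{equation*}
This implies that $(X^{-n,x_0}_0)_{n\ge 0}$ is a.s.\ Cauchy for any fixed $x_0$; its limit $\tilde X_0$ is a measurable function of $W^-$, and the joint law of $(\tilde X_0, W^-)$ provides an invariant distribution $\nu$ for $({\mathcal Q}_t)_{t\ge 0}$ on $\R^d \times {\mathcal W}$.

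\emph{Step 2 (uniqueness).} Given two invariant distributions $\nu_1,\nu_2$, I would couple the corresponding stationary processes through the same Brownian past. The contraction of Step 1 forces the two trajectories to come pathwise arbitrarily close, while $\HTROIS$ provides the non-degeneracy required to invoke Hairer's asymptotic strong Feller criterion from~\cite{hairer}; together they rule out any discrepancy between the laws of the two stationary processes, yielding uniqueness in the sense of~\cite{hairer}.

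\emph{Step 3 (absolute continuity).} Assume further that $b$ is ${\mathcal C}^1$. By stationarity, for any $t>0$ the first marginal of $\nu$ equals the law of $X_t^\nu$, so it suffices to prove that $X_t$ admits a density with respect to $\lambda_d$ for any (possibly random) initial condition. This is a standard consequence of Malliavin calculus for fractional SDEs (Nualart--Saussereau when $H>1/2$; Baudoin--Hairer and Cass--Friz--Lyons when $H<1/2$): the $\mathcal{C}^1$ regularity of $b$ gives a well-defined Jacobian flow, and the invertibility of $\sigma$ (assumption $\HTROIS$) makes the Malliavin matrix of $X_t$ a.s.\ non-degenerate. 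Integrating with respect to the initial condition yields the absolute continuity of $\bar\nu$, which provides the density $f$.

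\emph{Main obstacle.} The genuinely delicate step is Step 2: because the enriched Markov process lives on the infinite-dimensional space $\R^d\times{\mathcal W}$, classical Markovian uniqueness tools (Doeblin, Harris, \ldots) do not apply, and one is forced to rely on Hairer's infinite-dimensional machinery combining asymptotic strong Feller and support arguments. The honest plan is therefore to quote~\cite{hairer} at this step and to concentrate the direct technical work on the contraction estimate in Step 1 and on the Malliavin-type non-degeneracy in Step 3.
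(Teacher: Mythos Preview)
Your Steps~1 and~2 align with the paper, which simply cites \cite{hairer} for existence and uniqueness; your explicit pathwise contraction in Step~1 is correct and is essentially the mechanism behind Hairer's construction under $\HUN$.

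Step~3 is where you diverge, and there is a subtle gap. You write that, by stationarity, $\bar\nu=\mathcal{L}(X_t^\nu)$ and propose to obtain a density by applying Malliavin results for deterministic initial data and then ``integrating with respect to the initial condition''. The difficulty is that under the stationary measure $\nu$, the initial datum $X_0$ is a functional of the Brownian past $W^-$, while the increments $(B_t^H)_{t\ge 0}$ \emph{also} depend on $W^-$ through the Mandelbrot--Van~Ness representation. Thus $X_0$ and the driving noise on $[0,\infty)$ are not independent, and the references you cite (Nualart--Saussereau, Baudoin--Hairer, Cass--Friz), which assume a deterministic starting point driven by a fresh fBm, do not apply after conditioning on $X_0$ alone. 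To salvage your route one must condition on the full pair $(X_0,W^-)$; the conditional dynamics is then driven by the Riemann--Liouville process $Z_t=c_H\int_0^t(t-s)^{H-1/2}\,dW_s$ plus the deterministic drift $\sigma D_t(W^-)$, and a density result for \emph{that} SDE is plausible but is not what the standard references state.

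The paper sidesteps this entirely: it starts from a deterministic $x$ (so the cited Malliavin-type results apply verbatim and give a density for $\mathcal{L}(X_t^x)$ for every $t>0$), and then invokes Hairer's total-variation convergence $\mathcal{L}(X_t^x)\to\bar\nu$. Since absolute continuity with respect to $\lambda_d$ is preserved under TV limits, $\bar\nu$ inherits a density. This avoids any analysis of the coupled stationary initial condition and is the cleaner way to close the argument.
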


   Existence and uniqueness are consequences of \cite{hairer}. For the existence of density for $\bar{\nu}$,
   we rely on the one hand, on \cite[Theorems 1.2, 1.3]{hairer}, which state that  $(X_t^x)$ converges in total variation distance towards $\bar{\nu}$ and, on the other hand, to the fact that, under $\HTROIS$, the distribution of $(X_t^x)$ has a density with respect to the Lebesgue measure for any $t>0$ (see $e.g.$ \cite[Theorem 1.2]{Besalu_kohatsu_tindel} or \cite[Theorem 4.3]{baudoin_hairer} when $H>1/2$). The combination of these two properties implies that $\bar{\nu}$ is absolutely continuous $w.r.t.$ to the Lebesgue measure (or equivalently that the density exists).\smallskip


   \begin{Remark} $\rhd$ {For the existence of the invariant distribution}, Assumption $\HUN$ could be alleviated. More precisely, the contraction assumption may be only assumed out of a compact set. However, we chose to recall the result only under this assumption since, in the sequel, we will need $\HUN$ to obtain concentration properties.\\

   \noindent $\rhd$ {In this paper, we do not discuss about the smoothness of $f$.  {This problem} is out of the scope of the paper. However, we can expect that the smoothness of $f$ strongly relies on the one of $b$. For instance, in the setting of gradient diffusions $dx_t=-\nabla U(x_t)+\sigma_0 dW_t$ (where $\sigma_0$ is a positive number) , it is well-known that the density is given by $ f(x)=\frac{1}{Z_{U,\sigma_0}} \exp\left(-\frac{U(x)}{2\sigma_0^2}\right)$.   This involves that in this particular case, when $b$ is of class ${\cal C}^{r+\alpha}$ ($\alpha\in(0,1]$), then $f$ is of class ${\cal C}^{r+1+\alpha}$. We conjecture that this property is still true in our setting.}
   \end{Remark}

   \subsection{Concentration inequalities for stationary solution}

   Let $n\in\N^*$. We denote by $d_n$ the following $L^1$--distance:
   \begin{equation}\label{eq:def_dn}
   \forall (x,y)\in(\R^d)^n\times(\R^d)^n ,\quad d_n(x,y):=\sum_{k=1}^n|x_i-y_i|,
   \end{equation}
   where $|\,.\,|$ stands for the Euclidean norm on $\ER^d$. For a given $d\times d$-matrix $A$ with real entries, we also denote by $\|\,.\,\|$ a given matrix-norm, subordinated to the Euclidean norm.

   \begin{Theorem}\label{main_thm_concentration}
   Let $H\in(0,1)$. Assume {$\HUN$, $\HDEUX$ and $\HTROIS$ and denote by $(X_t)_{t\geq0}$ the stationary solution associated to \eqref{modeldiffusion}}. Let $n\in\N^*$ and $\Delta_n>0$ {such that $n\Delta_n\geq~1$}.
   Let $d_n$ be the metric defined by \eqref{eq:def_dn}. Then, there exists some positive constant {$\constPV=C(H,L,\alpha,|b(0_{\R^d})|,\|\sigma\|)$} such that for all Lipschitz function $F:\left((\R^d)^n,d_n\right)\to(\R,|\cdot|)$ and for all $\lambda>0$,
   \begin{equation}\label{eq:moment_exp_F}
   \e\left[\exp\left(\lambda(F_X-\e[F_X])\right)\right]\leqslant \exp\left(\constPV\|F\|^2_{\rm Lip}\lambda^2n^{\ah}\Delta_n^{-\betaH}\right).
   \end{equation}
   where $\ah:=\max\{2H,1\}$, $\betaH:=\min\{1,2-2H\}$ {and $F_X:=F(X_{t_1},\dots,X_{t_n})$}.\\
   Moreover, we deduce from the previous inequality that
   \begin{equation}\label{eq:concentration_F}
   \p\left(F_X-\e[F_X]>r\right)\leqslant \exp\left(\frac{-r^2}{4\constPV n^{\ah}\Delta_n^{-\betaH}\|F\|^2_{\rm Lip}}\right).
   \end{equation}

   \end{Theorem}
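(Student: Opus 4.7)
The strategy is to adapt the pathwise conditioning argument of \citet{varvenne2019concentration} to the case where the process already carries an infinite past at time $0$. The starting point is the Mandelbrot--Van Ness representation \eqref{eq:mandelbrot}, which realizes $B^H$---and hence both the vector $(X_{t_i})_{1\le i\le n}$ and the stationary initial condition $X_0$---as a measurable functional of the two-sided Brownian motion $W=(W_t)_{t\in\R}$. I would partition $\R$ into blocks $I_k$ of length comparable to $\Delta_n$ and introduce a filtration obtained by conditioning on the restriction of $W$ to the complement of each block. Writing the telescoping decomposition $F_X-\e[F_X]=\sum_k D_k$ along this filtration and invoking an Azuma--Hoeffding-type estimate then reduces the problem to bounding, for every block $I_k$, the sensitivity of $F_X$ to an independent resampling of $W$ on $I_k$.

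For the future part $W^+=(W_t)_{t\ge0}$, resampling $W$ on a block $I_k$ yields a Gaussian perturbation $\phi_t$ of the fBm path whose size is governed by the Mandelbrot--Van Ness kernel $(t-s)_+^{H-1/2}-(-s)_+^{H-1/2}$; its singularity at $s=t$ is exactly what produces the factor $\Delta_n^{-\betaH}$. The induced perturbation $u_t=X_t-\tilde{X}_t$ of the SDE then solves an equation which, under $\HUN$ and $\HDEUX$, can be controlled by a variation-of-constants/Gronwall argument (interpreted pathwise via Young or rough-path integration by parts, depending on the regime $H\gtrless1/2$), giving contraction of the form $|u_t|\le e^{-\alpha(t-s)}|u_s|$ modulo noise-driven terms. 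Summing the resulting bounds over the $n$ observation times via the $d_n$-Lipschitz property of $F$ produces a factor $n^{\ah}$, which behaves as $n$ when $H\le 1/2$ (the kernel is square-integrable in time) and as $n^{2H}$ when $H>1/2$ (long-range dependence of the fBm increments).

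The main obstacle, and the novelty relative to \cite{varvenne2019concentration}, is the treatment of the past blocks $I_k\subset(-\infty,0]$: a perturbation of $W^-$ simultaneously modifies the stationary initial value $X_0$ and the whole future fBm path via the kernel. The first effect will be handled through the pullback representation of the invariant distribution: one writes $X_0^\nu$ as the limit, as $T\to\infty$, of the solution started at time $-T$ from a fixed deterministic point, and exploits the contraction provided by $\HUN$ to obtain an exponential decay in $|k|$ of the sensitivity of $X_0^\nu$ to the $k$-th past block. The second effect, namely the residual drift that a change of $W^-$ produces on $B^H_{|[0,\infty)}$, will be bounded by exploiting the explicit form of the kernel on $(-\infty,0]\times[0,t]$: the resulting contribution decays in $|k|$ and is absorbed, via the same Gronwall estimate as above, into a lower-order correction that does not alter the order $n^{\ah}\Delta_n^{-\betaH}$. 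Collecting the block-wise bounds in the Laplace transform, optimising in $\lambda$, and applying the standard Chernoff argument then yield \eqref{eq:moment_exp_F} and \eqref{eq:concentration_F}.
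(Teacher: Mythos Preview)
Your plan captures the right skeleton---telescoping martingale differences along a decomposition of the driving Brownian motion, coupling two paths that share all but one block, and using the contraction $\HUN$ to propagate the perturbation---but two of your design choices diverge from the paper and would make the execution harder than necessary.

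\emph{Block size.} You propose blocks of length $\Delta_n$; the paper uses unit-length blocks $[k-1,k]$, $k=1,\dots,\lceil t_n\rceil$, with the forward filtration $\mathcal{F}_k=\sigma(W_s:s\le k)$. Unit blocks are convenient because the coupling estimate (Lemma~\ref{lem:control_Y_Ytilde} in the paper) splits naturally into a short-time regime $u\le 1$ handled by Gronwall with $\HDEUX$ and a long-time regime $u\ge 1$ where $\HUN$ yields the polynomial decay $u^{H-3/2}$. The factor $\Delta_n^{-\betaH}$ then arises not from the block length but from counting how many observation times fall in each unit block, namely via $\psi_{n,k}=\lfloor k/\Delta_n\rfloor-\lceil(k-1)/\Delta_n\rceil+\sum_{i\ge\lceil k/\Delta_n\rceil}(t_i-k+1)^{H-3/2}$ and then $\sum_k\psi_{n,k}^2$. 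With $\Delta_n$-blocks you would instead face a per-block contraction $e^{-\alpha\Delta_n}\to1$ and a Gaussian perturbation whose variance also scales with $\Delta_n$; the bookkeeping to recover $n^{a_H}\Delta_n^{-\betaH}$ is less transparent.

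\emph{The past.} This is where your route really diverges. You want to chop $(-\infty,0]$ into infinitely many blocks and control $X_0^\nu$ by the pullback limit $\lim_{T\to\infty}X_{[-T,0]}^{x_0}$. The paper instead treats the whole past in a \emph{single} term: it stops the martingale decomposition at $k=0$ and bounds $\e[F_X\mid\mathcal{F}_0]-\e[F_X]$ directly via
\[
\bigl|\e[F_X\mid\mathcal{F}_0]-\e[F_X]\bigr|\le \|F\|_{\mathrm{Lip}}\sum_{k=1}^n\int_{\R^d\times\Omega_-}\bigl|X_{t_k}^{X_0,W^-}-X_{t_k}^{y,w}\bigr|\,\nu(dy,dw),
\]
coupling the true stationary path with an independent copy of the initial condition $(y,w)\sim\nu$. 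The two ingredients are then (i) a deterministic bound $|X_t^{x,\tilde w}-X_t^{y,w}|^2\lesssim e^{-\alpha t}|x-y|^2+\Phi_{\varepsilon,\varepsilon'}(\tilde w-w)(t\vee1)^{2H-2+\varepsilon'}$, obtained from $\HUN$ together with an explicit estimate on the derivative of $D_t(w)=c_H\int_{-\infty}^0[(t-s)^{H-1/2}-(-s)^{H-1/2}]\,dw_s$, and (ii) sub-Gaussian moment bounds on $\Phi_{\varepsilon,\varepsilon'}(W^-)$ and on $\int|y|^p\bar\nu(dy)$ (the latter by comparison with a fractional Ornstein--Uhlenbeck process). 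This produces a contribution of order $\|F\|_{\mathrm{Lip}}^2(n\Delta_n)^{H+\varepsilon'}/\Delta_n$, which is dominated by the future part. The pullback construction you sketch would eventually lead to the same place, but it forces you to justify the limit, to sum an infinite series of block contributions, and to separately track the effect of a past block on $X_0$ and on $(B_t^H)_{t\ge0}$; the paper sidesteps all of this by integrating against $\nu$ once.
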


   \begin{Corollary} \label{cor:concentration_inequality}
   Let the assumptions of Theorem \ref{main_thm_concentration} be in force. Let $F_X:=\frac{1}{n}\sum_{k=1}^ng(X_{t_k})$ where $g:(\R^d,|\cdot|)\to(\R,|\cdot|)$ is a given Lipschitz function. We have $\|F\|^2_{\rm Lip}=n^{-2}\|g\|_{\rm Lip}^2$ and then there exists some positive constant {$C=C(H,L,\alpha,|b(0_{\R^d})|,\|\sigma\|)$} such that
   \begin{equation}\label{eq:concentration_discrete_sum}
   \p\left(\frac{1}{n}\sum_{k=1}^n\left(g(X_{t_k})-\e[g(X_{t_k})]\right)>r\right)\leqslant \exp\left(\frac{-r^2n^{\betaH}\Delta_n^{\betaH}}{4\constPV \|g\|^2_{\rm Lip}}\right)
   \end{equation}
   since $2-\ah=\betaH$.
   \end{Corollary}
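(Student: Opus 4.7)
The plan is to derive this corollary as a direct specialization of Theorem~\ref{main_thm_concentration}, so the task splits into two elementary steps: (i) verify the Lipschitz bound on the averaging functional, and (ii) substitute into the concentration inequality~\eqref{eq:concentration_F}.

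First I would compute $\|F\|_{\rm Lip}$ for $F(x_1,\dots,x_n) := \frac{1}{n}\sum_{k=1}^n g(x_k)$ with respect to the metric $d_n$ defined in~\eqref{eq:def_dn}. For any $x,y\in(\R^d)^n$, the triangle inequality combined with the Lipschitz property of $g$ yields
\begin{equation*}
|F(x)-F(y)| \leqslant \frac{1}{n}\sum_{k=1}^n |g(x_k)-g(y_k)| \leqslant \frac{\|g\|_{\rm Lip}}{n}\sum_{k=1}^n |x_k-y_k| = \frac{\|g\|_{\rm Lip}}{n}\, d_n(x,y),
\end{equation*}
so that $\|F\|_{\rm Lip} \leqslant n^{-1}\|g\|_{\rm Lip}$, i.e.\ $\|F\|_{\rm Lip}^2 \leqslant n^{-2}\|g\|_{\rm Lip}^2$ as announced.

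Next I would plug $F_X = \frac{1}{n}\sum_{k=1}^n g(X_{t_k})$ into~\eqref{eq:concentration_F} of Theorem~\ref{main_thm_concentration}. Replacing $\|F\|_{\rm Lip}^2$ by $n^{-2}\|g\|_{\rm Lip}^2$ in the right-hand side gives the exponent
\begin{equation*}
\frac{-r^2}{4\constPV\, n^{\ah}\Delta_n^{-\betaH}\|F\|_{\rm Lip}^2} \leqslant \frac{-r^2 \, n^{2-\ah}\,\Delta_n^{\betaH}}{4\constPV\,\|g\|_{\rm Lip}^2},
\end{equation*}
and it remains only to invoke the identity $2-\ah=\betaH$ recorded in the statement (which is immediate from $\ah=\max\{2H,1\}$ and $\betaH=\min\{1,2-2H\}$) to recover exactly~\eqref{eq:concentration_discrete_sum}.

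There is no real obstacle here: the corollary is a direct quantitative consequence of Theorem~\ref{main_thm_concentration}; all the probabilistic work has been done upstream in establishing the sub-Gaussian bound~\eqref{eq:moment_exp_F} for arbitrary Lipschitz functionals of the stationary discrete-time skeleton. The only point worth stressing is the scaling $\|F\|_{\rm Lip}^2 \asymp n^{-2}$, which is what allows the power of $n$ in the exponent to flip from $n^{\ah}$ (in the raw bound) to $n^{\betaH}$ (in the averaged bound), thereby producing the usual $(n\Delta_n)^{\betaH}$ decay rate that will govern the statistical estimates in the sequel.
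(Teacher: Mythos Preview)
Your proposal is correct and is exactly the argument the paper intends: the corollary is stated as an immediate consequence of Theorem~\ref{main_thm_concentration}, with the only ingredients being the Lipschitz bound $\|F\|_{\rm Lip}\le n^{-1}\|g\|_{\rm Lip}$ and the identity $2-\ah=\betaH$, both of which you verify cleanly.
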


   \begin{Remark}\label{rem:theoconcen}{$\rhd$ In Theorem \ref{main_thm_concentration}, assumption $\HTROIS$ ensures the uniqueness of the stationary solution $(X_t)_{t\geq0}$ but the concentration result remains true for every stationary solution to \eqref{modeldiffusion} when $\HTROIS$ does not hold. } \\
   \noindent $\rhd$ In the above results, the constant $\constPV=C(H,L,\alpha,|b(0_{\R^d})|,\|\sigma\|)$ can be chosen in such a way that  $(L,\alpha,b_0,q)\mapsto C(H,L,\alpha,b_0,q)$ is bounded on every compact set of $\ER_+\times\ER_+^*\times\ER_+\times\ER_+$ (see Remarks \ref{rem:constant1} and \ref{rem:constant2} for more details).\\
   \noindent $\rhd$ In Corollary \ref{cor:concentration_inequality}, we remark that concentration bounds can be deduced from \eqref{eq:concentration_discrete_sum} if we impose at least that $\lim\limits_{n\to+\infty}n\Delta_n=+\infty$, $i.e.$ that $\lim\limits_{n\to+\infty} t_n=+\infty$.
   \end{Remark}

   \section{Adaptive framework}\label{sec:adapt}

   {Let us recall that $\bar{\nu}$ denotes the marginal invariant distribution and that $f$ denotes its density $w.r.t.$ the Lebesgue measure $\lambda_d$, which is assumed to exist in whole the paper (see Proposition~\ref{pro:smoothinvariant} for conditions of existence).}
   To measure the accuracy of an estimator $\tilde f = \tilde f(\cdot, X_{t_1},\dotsc,X_{t_n})$ of $f$, we define the pointwise risk
   \[
   R_n(\tilde f, f) = \left(\e\left|\tilde f(\xx)-f(\xx)\right|^p\right)^{1/p}
   \]
   where $p\geq 1$ is fixed.
   Let $\bF$ be a subset of $\cC(\R^d,\R)$. In what follows, we will consider specific H\"older classes. The maximal risk of $\tilde{f}_n$ over $\bF$ is defined by:
   \begin{equation}
      R_n(\tilde{f},\bF) = \sup_{f\in\bF} R_n(\tilde{f},f).
   \end{equation}
   We say that an estimator $\tilde{f}$ converges at the rate of convergence $\phi_n(\bF)$ over $\bF$  if
   \begin{equation}\label{eq:rateXX}
   \limsup_{n\to \infty} \phi_n^{-1}(\bF)  R_n(\tilde{f},\bF) <\infty
   \end{equation}
   Note that such estimator may depend on the class $\bF$. Moreover \eqref{eq:rateXX} ensures a specific behavior of  the estimator $\tilde{f}$ over $\bF$ but the same estimator can perform poorly over another functional space. The problem of adaptive estimation consists in finding a single estimation procedure with a good behavior over a scale of functional classes. More precisely, given a family $\{\bF_\lambda : \lambda\in\Lambda\}$ of subsets of $\cC(\R^d,\R)$, the goal is to construct $f^*_n$ such that $R_n(f^*_n, \bF_\lambda)$ is asymptotically bounded, up to a small multiplicative factor (for example a constant or a logarithmic term), by $\phi_n(\bF_\lambda)$ for any $\lambda\in\Lambda$. One of the main tools to prove that an estimation procedure is adaptive over a scale of functional classes is to prove an oracle-type inequality that guarantees that this procedure performs almost as well as the best estimator in a rich family of estimators. Ideally, we would like to have an inequality of the following form:
   \begin{equation}\label{eq:ideal-oracle}
      R_n(f^*, f) \leq \inf_{\eta\in H} R_n(\hat f_\eta, f),
   \end{equation}
   where $\{\hat f_\eta\colon \eta\in H\}$ is a family of estimators satisfying: for any $\lambda\in\Lambda$, there exists $\eta(\lambda)$ such that $\hat f_{\eta(\lambda)}$ converges at the rate $\phi_n(\bF_\lambda)$ over the class $\bF_\lambda$.
   {In general, obtaining such an inequality is not possible. However in many situations, \eqref{eq:ideal-oracle} can be relaxed and a weaker inequality of the following type can be proved:}
   \begin{equation}\label{eq:oracle}
      R_n(f^*, f) \leq \Upsilon_1\inf_{\eta\in H} R_n^*(f, \eta) + \Upsilon_2\delta(n),
   \end{equation}
   where $\Upsilon_1$ and $\Upsilon_2$ are two positive constants, $R_n^*(f, \eta)$ is an appropriate quantity to be determined that can be viewed as a tight upper bound on $R_n(\hat f_{\eta}, f)$ and $\delta(n)$ is a reminder term. Inequalities of the form~\eqref{eq:oracle} are called \emph{oracle-type} inequalities.

   \section{Estimation procedure}\label{sec:estim}

   {To estimate $f$ we construct a procedure defined through classical kernel density estimators. It is well known that the accuracy of these estimators is mainly determined by  the \emph{bandwidth} vector. Thus, obtaining a \emph {data-driven} choice of this parameter is the central problem in our model. In this section, after introducing a family of kernel density estimators, we define a selection procedure based on the ideas developed in~\citet{GL2011}.}

   \subsection{Kernel density estimators}\label{sec:kernel}

   In this paper a function $K:\R\to\R$ is called a kernel if the support of $K$ is included into $[-1,1]$, $K$ is a Lipschitz function with Lipschitz constant $\LK>0$ and $K$ satisfies $\int_{\R} K(u) \mathrm{d}u= 1$. Following \cite{rosenblatt1956remarks} and \cite{parzen1962estimation}, we consider kernel density estimators $\hat f_h$ defined, for $h = (h_1,\dotsc,h_d)\in(0,+\infty)^d$, by:
   \begin{equation}\label{eq:def-kernel-estim}
      \hat f_h(\xx) = \frac1n\sum_{i=1}^n K_h(\xx-X_{t_i}),
   \end{equation}
   where, for $(u_1,\dotsc,u_d)\in\R^d$, we define $K_h(u_1,\dotsc, u_d)=\prod_{i=1}^d h_i^{-1}K(h_i^{-1}u_i)$.
   We say that a kernel $K$ is of order $M\in\N$ if for any $1\leq \ell\leq M$, we have $\int_{\mathbb{R}} K(x) x^\ell dx = 0$. In the following paragraph, we propose a data-driven procedure to select the bandwidth $h$ in the finite subset $\cH$ of $(0,1)^d$.

   \subsection{Bandwidth selection}\label{sec:bandwidth-selection}

   Our procedure depends on a hyperparameter $\kkk>0$. We refer the reader to Section~\ref{sec:results} for  detailed comments on the impact of the choice of this parameter.
   For any bandwidth vector $h=(h_1,\dots,h_d)\in(0,1)^d$, we define:
   \begin{equation}\label{eq:Vhvarphi}
      V_h = \min(h_1,\dotsc,h_d)\prod_{i=1}^d h_i
      \quad\text{and}\quad
      \varphi_n(h) =
      \left(\frac{4d\LK^2}{V_h^2(n\Delta_n)^{\betaH}}\right)^{1/2},
   \end{equation}
   where $\betaH$ is defined by~\eqref{eq:def-gamma-beta}.
   We consider the following subset of $(0,1)^d$
   \begin{equation*}
      \cH = \left\{
      (e^{-l_1}, \dotsc, e^{-l_d}) : l_i = 0,\dotsc, \left\lfloor\frac{\betaH}{2}
      \log\left(n\Delta_n\right)\right\rfloor
      \right\} \cap \widetilde\cH
   \end{equation*}
   where $\lfloor\cdot\rfloor$ denotes the integer part and
   \begin{equation*}
      \widetilde{\cH} = \left\{
      h \in (0, 1)^d :
      V_h \geq \left( \frac{1}{n\Delta_n} \right)^{\betaH/2}
      \right\}.
   \end{equation*}
   {Without loss of generality, we assume that $(n\Delta_n)^{\beta_H}\ge e$, which implies in particular that $\cH$ is not empty.}

   Following \cite{GL2011}, we define for $h$ and $\kh$ in $\cH$ the following quantities
   \begin{equation}\label{eq:MNHH}
      M_n(h,\kh) =  M_n(\kh)+ M_n(\kh\vee h)
      \quad\text{with}\quad
      M_n(h) = \varphi_n(h)\sqrt{\kkk p |\log V_h|}
   \end{equation}
   and
   \begin{equation}
      B(h,\xx) = \max_{\kh\in\cH}\left\{\left|\hat f_{\kh\vee h}(\xx)-\hat f_{\kh}(\xx)\right|-{M}_n(h,\kh)\right\}_+.
   \end{equation}
   Here $\{y\}_+=\max(0,y)$ denotes the nonnegative part of $y\in\R$ and  $\kh\vee h$ denotes the component-wise maximum of the bandwidth $h$ and $\kh$.
   Our procedure consists of selecting a bandwidth $\hat h(\x)$ such that
   \begin{equation}\label{eq:selection-rule}
      \hat h(\xx) = \argmin_{h\in\cH} \left(B(h,\xx)+ M_n(h)\right).
   \end{equation}
   The final estimator of $f(\xx)$ is then defined as the plugin estimator:
   \begin{equation}
      \hat f(\xx) = \hat f_{\hat h(\xx)}(\xx).
   \end{equation}

   This selection rule follows the principles and the ideas developed by Goldenshluger and Lepski. The quantity $M_n(h)$, called a \emph{majorant}, is a penalized version of the standard deviation of the estimator $\hat{f}_h$ while the quantity $B(h,\xx)$ is, in some sense, closed to its bias term. Finding tight \emph{majorants} is the key point of the method since $\hat{h}(x_0)$ is chosen in \eqref{eq:selection-rule} in order to realize an empirical trade-off between these two quantities.

   \section{Results}\label{sec:results}
   We first recall the definition of H\"older balls $\Sigma_d(\boldsymbol{s},\bold L)$. For two $d$-tuples of positive reals $\boldsymbol{s}=(s_1,\dots,s_d)$ and $\bold L=(L_1,\dots,L_d)$,
   \begin{eqnarray*}
   &&\Sigma_d(\boldsymbol{s},\bold L)=\Big\{ f: \R^d\to \R \text{  s.t. } \forall\, 1\leq i \leq d\quad
   \left\|\frac{\partial^{m}f}{\partial x_i^{m}}\right\|_\infty\leq L_i, \quad m=0,\dots,\lfloor s_i\rfloor\\
   && \text{ and for all } \,t\in \R \quad \left\|\frac{\partial^{\lfloor s_i\rfloor}f}{\partial x_i^{\lfloor s_i\rfloor}}(\cdot+te_i) - \frac{\partial^{\lfloor s_i\rfloor}f}{\partial x_i^{\lfloor s_i\rfloor}}(\cdot)\right\|_{\infty}\leq L_i |t|^{s_i- \lfloor s_i\rfloor}
   \Big\}\\
   \end{eqnarray*}
   where for any $i$, $\lfloor s_i\rfloor=\max\{l\in\mathbb{N}: l<s_i\}$ and $e_i$ is the vector where all coordinates are null except the $i$-th one which is equal to 1.

   \subsection{Properties of the kernel estimators}

   The two following propositions give upper-bounds of the bias and the stochastic term of the estimator $\hat{f}_h$ .

   \begin{Proposition}\label{prop:bias}
   Let $\mathbf{s}=(s_1,\dotsc, s_d)\in(0,+\infty)^d$ and $\mathbf{L}=(L_1,\dotsc, L_d)\in(0,+\infty)^d$. Assume that  $f\in \Sigma_d(\boldsymbol{s},\bold L)$ and assume that $K$ is a kernel of order greater than $\max_i \lfloor s_i \rfloor$. Under $\HUN$,  $\HDEUX$ and $\HTROIS$, we have for all $h\in\cH$
   \begin{equation}\label{eq:bias}
      \left|\e\hat f_h(\xx) -  f(\xx)\right|\le \sum_{i=1}^d \frac{L_i h_i^{s_i}}{\lfloor s_i \rfloor !} \int_\R \big|v^{s_i} K(v)\big| dv
   \end{equation}
   and
   \begin{equation}\label{eq:bias2}
   E_h(\x)
   = \max_{\kh\in\cH}
   \left|\e \hat{f}_{h\vee\kh}(x_0)-\e\hat{f}_{\kh}(x_0)\right|
   \le 2\sum_{i=1}^d \frac{L_i h_i^{s_i}}{\lfloor s_i \rfloor !} \int_\R \big|v^{s_i} K(v)\big| dv.
   \end{equation}
   \end{Proposition}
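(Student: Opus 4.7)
By stationarity, each $X_{t_k}$ has density $f$, so $\e\hat f_h(\xx)=(K_h\ast f)(\xx)$; the hypotheses $\HUN$, $\HDEUX$, $\HTROIS$ enter only through Proposition~\ref{pro:smoothinvariant} to ensure that $f$ exists. Using $\int K_h=1$, the bias reads $\int K_h(\xx-y)[f(y)-f(\xx)]\,dy$, and the plan is to unfold the bracketed difference along the intermediate points $y^{(i)}:=(y_1,\ldots,y_i,\xx_{i+1},\ldots,\xx_d)$,
$$
f(y)-f(\xx)=\sum_{i=1}^d \bigl[f(y^{(i)})-f(y^{(i-1)})\bigr],
$$
so that only the $i$-th coordinate varies in each summand. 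Apply the Lagrange form of the Taylor expansion of order $\lfloor s_i\rfloor$ to $t\mapsto f(y_1,\ldots,y_{i-1},t,\xx_{i+1},\ldots,\xx_d)$ at $\xx_i$: the polynomial terms of degrees $1,\ldots,\lfloor s_i\rfloor$ vanish once integrated against the product kernel $K_h$, since integration in $y_i$ gives $\int v^m K(v)\,dv=0$ by the order assumption on $K$, while integrations in $y_{i+1},\ldots,y_d$ contribute $1$ because the derivative factor is independent of them and $\int K=1$. The remainder is controlled by the H\"older bound
$$
\bigl|\partial_i^{\lfloor s_i\rfloor}f(\cdots,t,\cdots)-\partial_i^{\lfloor s_i\rfloor}f(\cdots,\xx_i,\cdots)\bigr|\le L_i|t-\xx_i|^{s_i-\lfloor s_i\rfloor},
$$
and the change of variable $v_i=(y_i-\xx_i)/h_i$ produces the factor $h_i^{s_i}\int_\R|v|^{s_i}|K(v)|\,dv/\lfloor s_i\rfloor!$. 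Summing over $i$ yields~\eqref{eq:bias}.

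For~\eqref{eq:bias2}, denote by $B(h)$ the right-hand side of~\eqref{eq:bias}. The naive triangle inequality only bounds $|\e\hat f_{h\vee\kh}(\xx)-\e\hat f_\kh(\xx)|$ by $B(h\vee\kh)+B(\kh)$, and the second summand carries a $\kh$-dependence that the maximum over $\kh\in\cH$ cannot eliminate. The fix is to telescope the bandwidth change one coordinate at a time: set $a^{(0)}:=\kh$ and $a^{(j)}:=(h_1\vee\kh_1,\ldots,h_j\vee\kh_j,\kh_{j+1},\ldots,\kh_d)$, so that $a^{(d)}=h\vee\kh$, and write
$$
K_{h\vee\kh}\ast f - K_\kh\ast f = \sum_{j=1}^d \bigl(K_{a^{(j)}}\ast f - K_{a^{(j-1)}}\ast f\bigr).
$$
If $\kh_j\ge h_j$, the bandwidths already coincide in coordinate $j$ and the $j$-th summand is zero; otherwise $\kh_j<h_j$, and the two kernels differ only in the $j$-th direction, from bandwidth $\kh_j$ to $h_j$. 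The anisotropic H\"older assumption guarantees that, uniformly in the transverse variables, the one-dimensional slice of $f$ along axis $j$ lies in $\Sigma_1(s_j,L_j)$; the one-variable case of~\eqref{eq:bias} then bounds both $|K^{(j)}_{h_j}\ast_j f(y)-f(y)|$ and $|K^{(j)}_{\kh_j}\ast_j f(y)-f(y)|$ pointwise by $L_j h_j^{s_j}/\lfloor s_j\rfloor!\cdot\int|v^{s_j}K(v)|\,dv$ (using $\kh_j<h_j$). A triangle inequality, followed by the convolution with the remaining one-dimensional kernels (which, up to the normalisation of $K$, preserves that pointwise bound), delivers $2L_j h_j^{s_j}/\lfloor s_j\rfloor!\cdot\int|v^{s_j}K(v)|\,dv$ for each nonzero term. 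Summing over $j$ and extending to all coordinates gives the bound $2B(h)$, uniformly in $\kh\in\cH$, and hence for $E_h(\xx)$.

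The main obstacle is inequality~\eqref{eq:bias2}: the direct triangle bound retains a $\kh$-dependence that does not vanish under the maximum, and one must exploit that in coordinates where $\kh_j\ge h_j$ the bandwidth is unchanged. The coordinate-by-coordinate telescope above is the required non-routine step; everything else is a standard anisotropic H\"older bias calculation.
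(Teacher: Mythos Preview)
Your proof is correct and follows essentially the same approach as the paper's. For \eqref{eq:bias} both you and the paper telescope $f(y)-f(\xx)$ coordinate by coordinate, Taylor expand in the moving direction, kill the polynomial terms via the moment conditions on $K$, and control the remainder by the H\"older bound. For \eqref{eq:bias2} the paper's device is a single auxiliary ``bandwidth'' $\eta$ with $\eta_i=0$ on the set $\{i:h_i\ge\kh_i\}$ and $\eta_i=\kh_i$ elsewhere (so $\e\hat f_\eta$ is the partial convolution that leaves those coordinates unsmoothed), and then applies the Step~1 bound twice, to $(h\vee\kh,\eta)$ and $(\kh,\eta)$, picking up only the coordinates in $\{i:h_i\ge\kh_i\}$ and hence only powers $h_i^{s_i}$; your coordinate-by-coordinate telescope through the $a^{(j)}$ achieves exactly the same thing, with the factor $2$ arising from your per-coordinate triangle inequality through $f$ rather than from the paper's two applications of Step~1. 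Both routes tacitly ignore the $\|K\|_1^{d-1}$ factors coming from the transverse integrations (your ``up to the normalisation of $K$'' is precisely this); the paper does the same, so there is no discrepancy.
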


   \begin{Proposition}\label{prop:stoc}
   Under $\HUN$,  $\HDEUX$ and $\HTROIS$, we have for all $h\in\cH$
   \begin{equation}\label{eq:stoc}
      \left(\e\left|\hat f_h(\xx) - \e\hat f_h(\xx)\right|^p\right)^{1/p}
      \leq  \left(p\Gamma\left(\frac{p+1}{2}\right)\right)^{1/p}
      {\constPV^{1/2}}\varphi_n(h).
   \end{equation}
   \end{Proposition}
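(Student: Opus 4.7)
I will apply Corollary~\ref{cor:concentration_inequality} to the Lipschitz function $g(y):=K_h(\xx-y)$, so that
\[
\hat f_h(\xx)-\e\hat f_h(\xx)=\frac{1}{n}\sum_{k=1}^n\bigl(g(X_{t_k})-\e g(X_{t_k})\bigr),
\]
and then convert the resulting sub-Gaussian tail bound into a $p$-th moment bound by direct integration.

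The central step is to estimate the Lipschitz constant $\|g\|_{\rm Lip}$ (with respect to the Euclidean norm on $\R^d$). Starting from the product form $g(y)=\prod_{i=1}^d h_i^{-1}K(h_i^{-1}(\xx_i-y_i))$, a coordinate-wise differentiation combined with a telescoping argument---using that $K$ is $L_K$-Lipschitz and vanishes outside $[-1,1]$---yields, on the support of $g$, a bound of the form
\[
|\partial_{y_j} g(y)|\leq L_K \, h_j^{-1}\prod_{i=1}^d h_i^{-1},
\]
the factors $\|K\|_\infty^{d-1}$ coming out of the telescoping being controlled via the compact support and Lipschitz regularity of $K$. Summing squares and using $\sum_{j}h_j^{-2}\leq d(\min_j h_j)^{-2}$ together with the definition $V_h=\min_i h_i\prod_i h_i$ then produces the key estimate $\|g\|_{\rm Lip}^2\leq dL_K^2/V_h^2$, which matches exactly the numerator of $\varphi_n(h)^2$.

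Inserting this Lipschitz bound into Corollary~\ref{cor:concentration_inequality}, applied to both $g$ and $-g$, gives the two-sided tail estimate
\[
\p\bigl(|\hat f_h(\xx)-\e\hat f_h(\xx)|>r\bigr)\leq 2\exp\!\left(-\frac{r^2(n\Delta_n)^{\betaH}}{4\constPV\|g\|_{\rm Lip}^2}\right),\qquad r>0.
\]
I conclude by integrating this tail through the identity $\e|Z|^p=p\int_0^\infty r^{p-1}\p(|Z|>r)\,\mathrm{d}r$ and a standard Gaussian-type integral, which after a substitution of the form $u=r^2(n\Delta_n)^{\betaH}/(4\constPV\|g\|_{\rm Lip}^2)$ produces the $\Gamma$-function factor appearing in the statement. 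Taking the $p$-th root and substituting the Lipschitz bound yields the announced inequality. The main obstacle I anticipate lies precisely in the Lipschitz estimate on the multivariate product kernel: the telescoping naturally introduces powers of $\|K\|_\infty$, and these must be absorbed carefully (using $\|K\|_\infty\leq L_K$, valid since $K$ is $L_K$-Lipschitz with $K(\pm 1)=0$) so as to recover the clean $L_K^2/V_h^2$ dependence that appears in $\varphi_n(h)$.
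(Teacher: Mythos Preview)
Your proposal is correct and follows essentially the same route as the paper: apply Corollary~\ref{cor:concentration_inequality} to $g(y)=K_h(\xx-y)$, integrate the two-sided tail via $\e|Z|^p=p\int_0^\infty r^{p-1}\p(|Z|>r)\,dr$, and plug in the Lipschitz bound $\|g\|_{\rm Lip}\leq \sqrt{d}\,\LK V_h^{-1}$. The paper's proof simply asserts this last inequality without argument, so your telescoping derivation is in fact more detailed than what appears there.

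One small caveat on the Lipschitz step: the telescoping bound naturally produces a factor $\|K\|_\infty^{d-1}$, and your proposed absorption via $\|K\|_\infty\leq \LK$ (which is indeed valid from $K(\pm1)=0$) would yield $\LK^{d}$ rather than $\LK$, hence $\LK^{2d}/V_h^2$ instead of the ``clean'' $\LK^2/V_h^2$ you aim for. The paper's stated bound $\|g\|_{\rm Lip}\leq \sqrt{d}\,\LK V_h^{-1}$ therefore seems to rely on an implicit normalization such as $\|K\|_\infty\leq 1$; this does not affect the structure of the argument or the rate, only the constant.
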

   \begin{Remark}
      $\rhd$ Note that the control of the bias term obtained in Proposition~\ref{prop:bias} is the same as those obtained for the problem of density estimation in an i.i.d. context. The control of the stochastic term, see Proposition~\ref{prop:stoc}, relies on the concentration inequality obtained in {Corollary~\ref{cor:concentration_inequality}}. {The right hand side of~\eqref{eq:stoc} depends on the additional assumptions made on our model through the constant $\constPV$.}\\
      \noindent $\rhd$ This result is valid for a large class of functional (only a Lipschitz condition is required) and under weak assumptions on the process. For $H=1/2$, the concentration inequality is optimal, see \citet{saussereau2012transportation} and~\citet{MR2078555}.  However, under strongest assumptions---for example on the joint distribution of $(X_0, X_t)$, Bernstein-type inequalities can be used to derive a better upper bound on the stochastic term of order $(nh_1\dotsc h_d)^{-1}$, see \citet{MR3638047} for $d=1$.
   \end{Remark}

   Using the above propositions we derive, over any H\"older balls $\Sigma_d(\boldsymbol{s},\bold L)$, the rate of convergence achieved by a kernel estimator defined in~\eqref{eq:def-kernel-estim} with a specific choice of bandwidth that depend on the smoothness parameter~$\boldsymbol{s}$.

   \begin{Theorem}\label{theo:rates}
      Let $\boldsymbol{s}=(s_1,\dotsc, s_d)\in(0,+\infty)^d$ and $\mathbf{L}=(L_1,\dotsc, L_d)\in(0,+\infty)^d$. Assume that  $f\in \Sigma_d(\boldsymbol{s},\bold L)$ and assume that $K$ is a kernel of order greater than $\max_i \lfloor s_i \rfloor$. Under $\HUN$,  $\HDEUX$ and $\HTROIS$, the estimator $\hat{f}_{h(\boldsymbol{s})}$ defined through the bandwidth $h(\boldsymbol{s})=\left(h_1(\boldsymbol{s}),\ldots,h_d(\boldsymbol{s})\right)$ where for any $i=1,\dotsc, d$
      \begin{equation*}
         h_i(\boldsymbol{s}) =  \left(\frac{1}{(n\Delta_n)^{\betaH}}\right)^{\frac{\gamma(\boldsymbol{s})}{s_i}}
         \quad\text{with}\quad
         \gamma(\boldsymbol{s}) = \frac{\bar s}{2\left(1+\frac1{\min_j s_j}\right)\bar s +2}\in(0, 1/2)
      \end{equation*}
      and satisfies
      \begin{equation}\label{eq:rate}
         R_n(\hat{f}_{h(\boldsymbol{s})}, f)\le    (\Lambda_1+\Lambda_2)
         \left(n\Delta_n\right)^{-{\betaH}\gamma(\boldsymbol{s})},
      \end{equation}
      where
      \begin{equation*}
      \Lambda_1 =
      \sum_{i=1}^d \frac{L_i}{\lfloor s_i \rfloor !} \int_\R \big|v^{s_i} K(v)\big| dv,
      \quad
      \Lambda_2 = 2\sqrt{d\constPV}\left(p\Gamma\left( \frac{p+1}2 \right)\right)^{1/p}\LK.
   \end{equation*}
   \end{Theorem}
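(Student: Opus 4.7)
The plan is a standard bias--variance trade-off. By Minkowski's inequality in $L^p$,
\[
R_n(\hat f_{h(\boldsymbol{s})}, f) \le \bigl|\e \hat f_{h(\boldsymbol{s})}(\xx) - f(\xx)\bigr| + \Bigl(\e\bigl|\hat f_{h(\boldsymbol{s})}(\xx) - \e\hat f_{h(\boldsymbol{s})}(\xx)\bigr|^p\Bigr)^{1/p},
\]
and I would bound the first summand using Proposition~\ref{prop:bias} and the second using Proposition~\ref{prop:stoc}, then verify that the prescribed bandwidth $h(\boldsymbol{s})$ equalizes the two bounds at the announced rate $(n\Delta_n)^{-\betaH\gamma(\boldsymbol{s})}$.

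For the bias term, by construction $h_i(\boldsymbol{s})^{s_i}=(n\Delta_n)^{-\betaH\gamma(\boldsymbol{s})}$ for every $i=1,\dots,d$, so Proposition~\ref{prop:bias} immediately yields
\[
\bigl|\e \hat f_{h(\boldsymbol{s})}(\xx) - f(\xx)\bigr|\le \Lambda_1\,(n\Delta_n)^{-\betaH\gamma(\boldsymbol{s})}.
\]
For the stochastic term, Proposition~\ref{prop:stoc} reduces everything to computing $\varphi_n(h(\boldsymbol{s}))$. A direct calculation gives $\prod_i h_i(\boldsymbol{s})=(n\Delta_n)^{-\betaH\gamma(\boldsymbol{s})/\bar s}$ and $\min_i h_i(\boldsymbol{s})=(n\Delta_n)^{-\betaH\gamma(\boldsymbol{s})/\min_j s_j}$, so that
\[
V_{h(\boldsymbol{s})}^{-1}(n\Delta_n)^{-\betaH/2}=(n\Delta_n)^{\betaH\gamma(\boldsymbol{s})\bigl(\bar s^{-1}+(\min_j s_j)^{-1}\bigr)-\betaH/2}.
\]
The definition of $\gamma(\boldsymbol{s})$ is engineered precisely so that $\gamma(\boldsymbol{s})\bigl(1+\bar s^{-1}+(\min_j s_j)^{-1}\bigr)=1/2$, whence the exponent above equals $-\betaH\gamma(\boldsymbol{s})$ and $\varphi_n(h(\boldsymbol{s}))=2\sqrt{d}\,\LK (n\Delta_n)^{-\betaH\gamma(\boldsymbol{s})}$. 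Plugging this back into~\eqref{eq:stoc} gives the bound $\Lambda_2(n\Delta_n)^{-\betaH\gamma(\boldsymbol{s})}$, and summing the two contributions concludes the proof.

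No substantive obstacle arises: the probabilistic content is already encapsulated in Propositions~\ref{prop:bias}--\ref{prop:stoc} (and ultimately in Corollary~\ref{cor:concentration_inequality}). The only bookkeeping is to confirm that the propositions apply at the exact $h(\boldsymbol{s})$: one needs $h(\boldsymbol{s})\in(0,1)^d$, which is automatic from $n\Delta_n\ge 1$ and $\gamma(\boldsymbol{s})>0$, and, if required, $V_{h(\boldsymbol{s})}\ge (n\Delta_n)^{-\betaH/2}$, which is equivalent to $\gamma(\boldsymbol{s})\bigl(\bar s^{-1}+(\min_j s_j)^{-1}\bigr)\le 1/2$ and holds strictly thanks to the extra $+1$ in the denominator of $\gamma(\boldsymbol{s})$. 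Should one insist on working within the geometric grid $\cH$, replacing each $h_i(\boldsymbol{s})$ by the nearest grid point only affects $\Lambda_1$ and $\Lambda_2$ by a universal multiplicative constant, preserving the rate.
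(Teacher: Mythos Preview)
Your argument is correct and matches the paper's approach: the paper does not spell out a separate proof of Theorem~\ref{theo:rates} but simply states that it follows by combining Propositions~\ref{prop:bias} and~\ref{prop:stoc}, which is precisely the bias--variance decomposition and exponent bookkeeping you carry out. Your observation that Propositions~\ref{prop:bias}--\ref{prop:stoc} are stated for $h\in\cH$ while $h(\boldsymbol{s})$ need not lie on the grid is apt, and your resolution (the proofs of those propositions apply to any $h\in(0,1)^d$, or alternatively one rounds to the grid at the cost of a harmless constant) is the right one.
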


   To our best knowledge few papers deal with nonparametric rate of convergence in our model. Only \citet{comte2018nonparametric} have considered the estimation of the trend function $b$ based on continuous observations when $d=1$ and $H>1/2$. They obtain the same rates of convergence only in the case $s=1$ assuming a Lipschitz condition on the function $b$.

   \subsection{Properties of the data-driven procedure}

   The estimator $\hat{f}$, defined in Section~\ref{sec:bandwidth-selection} using the hyperparameter $\kkk$ and the family of bandwidths $\cH$ satisfies the following oracle inequality.

   \begin{Theorem}\label{thm:oracle}
      Under $\HUN$, $\HDEUX$ and $\HTROIS$, if $\kkk>\constPV$, we have:
      \begin{equation}\label{eq:thm-oracle}
         R_n(\hat{f}, f) \leq
         \min_{h \in\cH} \left\{
            R_n(\hat{f}_h, f) +
            4 M_n(h)
            + 3 E_h(\x)
         \right\}
         + C_0(n\Delta_n)^{-\betaH/2}
      \end{equation}
      with
      \begin{equation*}
         C_0 = 6\Lambda_2
      \left(
      \frac{e^{p\left( \frac{\kkk}{\constPV}-1 \right)}}{e^{p\left( \frac{\kkk}{\constPV}-1 \right)}-1}
      \right)^{d/p}
      \end{equation*}
   \end{Theorem}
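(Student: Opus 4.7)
The plan follows the classical Goldenshluger--Lepski scheme, where Corollary~\ref{cor:concentration_inequality} plays the role of the usual subgaussian or Bernstein-type bound available in the i.i.d.\ setting. Fix $h\in\cH$ and decompose
\begin{equation*}
\hat f(\x)-f(\x) = \bigl(\hat f_{\hat h}-\hat f_{\hat h\vee h}\bigr)(\x)+\bigl(\hat f_{\hat h\vee h}-\hat f_h\bigr)(\x)+\bigl(\hat f_h-f\bigr)(\x).
\end{equation*}
Applying the definition of $B(h,\x)$ at $\kh=\hat h$ (resp.\ of $B(\hat h,\x)$ at $\kh=h$) controls the first two summands by $B(h,\x)+M_n(h,\hat h)$ and $B(\hat h,\x)+M_n(\hat h,h)$ respectively. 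The minimization property $B(\hat h,\x)+M_n(\hat h)\le B(h,\x)+M_n(h)$ afforded by \eqref{eq:selection-rule}, together with the monotonicity $M_n(\hat h\vee h)\le M_n(h)$ (which follows from $V_{\hat h\vee h}\ge V_h$ and the fact that $v\mapsto v^{-1}\sqrt{|\log v|}$ is decreasing on $(0,1)$), yields the deterministic bound
\begin{equation*}
|\hat f(\x)-f(\x)| \le 2\,B(h,\x)+4\,M_n(h)+|\hat f_h(\x)-f(\x)|.
\end{equation*}

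Next I split $B(h,\x)$ into a deterministic bias-like piece and a stochastic residual. For every $\kh\in\cH$, inserting $\e\hat f_{\kh\vee h}(\x)$ and $\e\hat f_\kh(\x)$ inside $|\hat f_{\kh\vee h}(\x)-\hat f_\kh(\x)|$ controls the deterministic cross-term by $E_h(\x)$. Subtracting $M_n(h,\kh)=M_n(\kh)+M_n(\kh\vee h)$, taking positive parts and maximising over $\kh\in\cH$ (note that $\cH$ is stable under componentwise maximum, so that $\kh\vee h\in\cH$), one obtains
\begin{equation*}
B(h,\x)\le E_h(\x)+2\,\xi_n,\qquad \xi_n:=\max_{g\in\cH}\bigl\{|\hat f_g(\x)-\e\hat f_g(\x)|-M_n(g)\bigr\}_+.
\end{equation*}
Inserting this into the previous display and taking $L^p$-norms (noting that $E_h(\x)$ is deterministic) produces an inequality of the form $R_n(\hat f,f)\le R_n(\hat f_h,f)+4M_n(h)+cE_h(\x)+4\|\xi_n\|_{L^p}$; minimising over $h\in\cH$ reduces \eqref{eq:thm-oracle} to the task of bounding $\|\xi_n\|_{L^p}$ by a constant times $(n\Delta_n)^{-\betaH/2}$ with the prescribed shape of the constant.

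The core step is this control of $\xi_n$. For each $g\in\cH$, write $\hat f_g(\x)-\e\hat f_g(\x)=\tfrac1n\sum_k[\psi_g(X_{t_k})-\e\psi_g(X_{t_k})]$ with $\psi_g(y)=K_g(\x-y)$. A direct Lipschitz computation for the anisotropic product kernel (in which the extra $\min_i h_i$ factor featuring in the definition of $V_g$ plays the key role) makes $\varphi_n(g)^2$ the exact variance proxy in Corollary~\ref{cor:concentration_inequality}, giving
\begin{equation*}
\p\bigl(|\hat f_g(\x)-\e\hat f_g(\x)|-M_n(g)>r\bigr)\le 2\exp\!\Bigl(-\frac{(M_n(g)+r)^2}{\constPV\,\varphi_n(g)^2}\Bigr).
\end{equation*}
Writing $\e[\{\ldots\}_+^p]=p\int_0^\infty r^{p-1}\p(\ldots>r)\,\mathrm{d}r$ and integrating, the choice $M_n(g)=\varphi_n(g)\sqrt{\kkk p|\log V_g|}$ produces the factor $V_g^{\kkk p/\constPV}$ at $r=0$. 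A union bound $\e[\xi_n^p]\le\sum_{g\in\cH}\e[\{\ldots\}_+^p]$ reduces the problem to a $d$-fold geometric series over the exponential grid defining $\cH$: the hypothesis $\kkk>\constPV$ is precisely what makes the common ratio $e^{-p(\kkk/\constPV-1)}$ strictly less than $1$, and the sum collapses to $\bigl(e^{p(\kkk/\constPV-1)}/(e^{p(\kkk/\constPV-1)}-1)\bigr)^{d/p}$, while the leading $(n\Delta_n)^{-\betaH/2}$ arises from $\varphi_n(g)$ at the boundary bandwidths where $V_g\sim(n\Delta_n)^{-\betaH/2}$.

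The main obstacle will be this last step: producing a Lipschitz estimate for the anisotropic product kernel $\psi_g$ that is sharp enough to match $\varphi_n(g)^2$ (which precisely explains the unusual $\min_i h_i$ factor in $V_g$), and organising the union-bound sum across the anisotropic grid $\cH$ finely enough to extract the explicit closed-form constant $C_0$ displayed in the statement.
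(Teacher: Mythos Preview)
Your proposal is correct and follows essentially the same Goldenshluger--Lepski scheme as the paper: the same triangle decomposition through $\hat f_{\hat h\vee h}$, the same splitting $B(h,\x)\le E_h(\x)+2T$, and the same control of the stochastic residual $T=\xi_n$ via Corollary~\ref{cor:concentration_inequality} followed by a union bound and a geometric sum over the exponential grid. Two small remarks: your use of the monotonicity $M_n(\hat h\vee h)\le M_n(h)$ together with a single application of the selection rule actually yields the sharper coefficients $2E_h(\x)$ and $4\|\xi_n\|_{L^p}$ (the paper gets $3$ and $6$ by applying the argmin inequality twice), so your route proves a slightly stronger bound than~\eqref{eq:thm-oracle}; and the factor $(n\Delta_n)^{-\betaH/2}$ does not come from ``boundary bandwidths'' but appears uniformly, since $\varphi_n(g)V_g=\bigl(4d\LK^2(n\Delta_n)^{-\betaH}\bigr)^{1/2}$ is independent of $g$, after which the remaining sum $\sum_{g\in\cH}V_g^{p(\kkk/\constPV-1)}$ is bounded by the $d$-fold geometric series you describe.
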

   {\begin{Remark}
      $\rhd$ The validity of this result depends on the hyperparameter $\kkk$ of our procedure. Using Remark~\ref{rem:theoconcen}, for a given value of $\kkk$, Theorem~\ref{thm:oracle} can be applied for a wide class of models, as soon as the constant $\constPV$, which depends on $H$, $L$, $\alpha$, $b(0_{\R^d})$ and $\lVert\sigma\rVert$ is less than $\kkk$.
   \end{Remark}}
   The oracle inequality allows us to obtain upper bound for the rates of convergence over H\"older balls $\Sigma_d(\boldsymbol{s},\bold L)$ in an adaptive framework.
   \begin{Theorem}\label{thm:adaptive}
   Let $M$ be a nonnegative integer and assume that $K$ is a kernel of order greater than $M$. Set $\mathbf{s}\in(0, M+1]^d$ and $\mathbf{L}\in(0, +\infty)^d$. Asume that $f\in\Sigma_d(\boldsymbol{s},\bold L)$, then under $\HUN$, $\HDEUX$ and $\HTROIS$ we have:
      \begin{equation}\label{toto}
         R_n(\hat f, f) \leq C_1  \left((n\Delta_n)^{-\betaH}
         \log\left((n\Delta_n)^{\beta_H}\right)  \right)^{\gamma(\mathbf{s})}
      \end{equation}
      where
      $$C_1=8\sqrt{\kkk d(d+3/2) p}\LK +7e^{M+1}\Lambda_1+ \Lambda_2+C_0.$$

   \end{Theorem}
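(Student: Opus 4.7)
The plan is to insert into the oracle inequality \eqref{eq:thm-oracle} a bandwidth $\tilde h \in \cH$ that reproduces the bias--variance balance of Theorem \ref{theo:rates} \emph{after} the logarithmic penalty carried by $M_n$ is taken into account. Following the heuristics behind Theorem \ref{theo:rates}, but this time matching the bias against $M_n(h)$ rather than against $\varphi_n(h)$ alone, I would pick the target bandwidth
\begin{equation*}
h_i^\star = \rho_n^{1/s_i},\qquad \rho_n := \bigl((n\Delta_n)^{-\betaH}\log((n\Delta_n)^{\betaH})\bigr)^{\gamma(\mathbf{s})},
\end{equation*}
and project it on the grid by setting $l_i^\star = \lceil -\log h_i^\star\rceil$ and $\tilde h_i = e^{-l_i^\star}$, so that $e^{-1}h_i^\star \leq \tilde h_i \leq h_i^\star$. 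The identity $\gamma(\mathbf{s})(1/\bar s + 1/\min_j s_j) = 1/2 - \gamma(\mathbf{s})$, read off \eqref{eq:def-gamma-beta}, shows that $\gamma(\mathbf{s})/s_i \leq 1/2$; this ensures that $l_i^\star \leq \lfloor (\betaH/2)\log(n\Delta_n)\rfloor$ and that $V_{\tilde h}\geq (n\Delta_n)^{-\betaH/2}$, hence $\tilde h \in \cH$.

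Next I would bound each ingredient of \eqref{eq:thm-oracle} at $\tilde h$. Since $\mathbf{s}\in(0,M+1]^d$ and $K$ has order at least $M$, Proposition \ref{prop:bias} together with $\tilde h_i\leq h_i^\star$ yields
\begin{equation*}
\bigl|\e\hat f_{\tilde h}(\x)-f(\x)\bigr|\leq e^{M+1}\Lambda_1\,\rho_n,\qquad E_{\tilde h}(\x)\leq 2e^{M+1}\Lambda_1\,\rho_n.
\end{equation*}
Proposition \ref{prop:stoc} combined with Minkowski controls the full risk by $R_n(\hat f_{\tilde h},f)\leq e^{M+1}\Lambda_1\,\rho_n + \Lambda_2\,\varphi_n(\tilde h)$. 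For the majorant I would bound the logarithm via
\begin{equation*}
|\log V_{\tilde h}| = \max_i l_i^\star + \sum_i l_i^\star \leq (d+1) + \bigl(\tfrac12-\gamma(\mathbf{s})\bigr)\log\bigl((n\Delta_n)^{\betaH}\bigr)\leq \bigl(d+\tfrac32\bigr)\log\bigl((n\Delta_n)^{\betaH}\bigr),
\end{equation*}
together with $V_{\tilde h}^{-1}\leq e^{d+1}V_{h^\star}^{-1}$, which produces $M_n(\tilde h)\leq 2\sqrt{\kkk d(d+3/2)p}\,\LK\,\rho_n$ up to numerical constants.

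I would then plug these four controls into \eqref{eq:thm-oracle}, use $(n\Delta_n)^{-\betaH/2}\leq \rho_n$ (a direct consequence of $\gamma(\mathbf{s})<1/2$) to absorb the remainder $C_0(n\Delta_n)^{-\betaH/2}$, and observe that the prefactor $\kkk>\constPV$ required by Theorem \ref{thm:oracle} is the same one entering $M_n$. The coefficients $4$ and $3$ in the oracle inequality account for the $4\cdot 2=8$ in front of $\sqrt{\kkk d(d+3/2)p}\LK$ and for the $3\cdot 2+1=7$ in front of $e^{M+1}\Lambda_1$, while $R_n(\hat f_{\tilde h},f)$ contributes the extra $\Lambda_2$ and the oracle remainder contributes $C_0$: these are exactly the four terms forming the constant $C_1$ stated in the theorem.

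The delicate point is the rebalancing of the logarithmic factor. A direct plug-in of the bandwidth $h(\mathbf{s})$ from Theorem \ref{theo:rates} into the oracle inequality would leave an unmatched $\sqrt{\log V_h}$ coming from $M_n$, producing the sub-optimal rate $\rho_n\cdot(\log)^{1/2-\gamma(\mathbf{s})}$. The slight inflation $h_i^\star = h_i(\mathbf{s})\cdot(\log)^{\gamma(\mathbf{s})/s_i}$ redistributes the logarithm so that bias and $M_n$ end up scaling \emph{identically} as $\rho_n$; tracking all multiplicative constants through this adjustment and verifying that $\tilde h$ still lies inside the grid $\cH$ is the only genuine technicality.
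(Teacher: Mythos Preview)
Your approach is the paper's: pick the log-adjusted bandwidth $h_i^\star=((n\Delta_n)^{-\betaH}\log((n\Delta_n)^{\betaH}))^{\gamma(\mathbf s)/s_i}$, project it onto $\cH$, plug into the oracle inequality \eqref{eq:thm-oracle}, and bound the four pieces using Propositions~\ref{prop:bias} and~\ref{prop:stoc} together with $(n\Delta_n)^{-\betaH/2}\le\rho_n$. The only difference is the direction of the grid projection. You take $l_i^\star=\lceil -\log h_i^\star\rceil$, so $\tilde h_i\le h_i^\star$; the paper takes the \emph{floor}, so its grid point sits \emph{above} $h^\star$, i.e.\ $h_i(\mathbf s)\le h_i^*\le e\,h_i(\mathbf s)$. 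With the paper's choice the rounding loss lands in the bias (hence the $e^{M+1}$ in front of $\Lambda_1$) while $V_{h^*}\ge V_{h(\mathbf s)}$, so both the membership $h^*\in\cH$ and the bound on $M_n(h^*)$ go through with no extra factor. Your choice reverses this: the bias is in fact $\le\Lambda_1\rho_n$ (no $e^{M+1}$ needed), but $V_{\tilde h}^{-1}\le e^{d+1}V_{h^\star}^{-1}$, so the bound you quote for $M_n(\tilde h)$ and the $\Lambda_2$ contribution are each off by a factor $e^{d+1}$, and the exact constant $C_1$ of the statement does not follow from your rounding; similarly, with the ceiling, $l_i^\star\le\lfloor(\betaH/2)\log(n\Delta_n)\rfloor$ and $V_{\tilde h}\ge(n\Delta_n)^{-\betaH/2}$ only hold once the $\log\log$ slack absorbs the extra $+1$, so $\tilde h\in\cH$ needs $n$ large enough rather than merely $(n\Delta_n)^{\betaH}\ge e$. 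None of this affects the rate $\rho_n$; replacing your ceiling by a floor recovers the paper's argument and the stated $C_1$ verbatim.
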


   \begin{Remark}
      $\rhd$ This result ensures that the estimator $\hat f$ achieves the rate of convergence obtained in Theorem~\ref{theo:rates} up to a logarithmic factor. Such behavior is well-known for pointwise adaptive estimation, see~\citet{MR1091202,MR1700239,MR3224636} among others.\\
      \noindent
      $\rhd$ If we let the hyperparameter $\kkk$ depend on $n$ (e.g.\@ $\kkk = \log(n)$) then the procedure is also asymptotically adaptive with respect to the values of $L$, $\alpha$, $b(0_{\R^d})$ and $\lVert\sigma\rVert$. In the case $\kkk = \log(n)$, the rate of convergence in \eqref{toto} is multiplied by $(\log n)^{1/2}$.
   \end{Remark}

   \section{On concentration inequalities for fractional SDEs in stationary regime}\label{sec:proofs1}

   \subsection{Sketch of proof of Theorem \ref{main_thm_concentration}}\label{subsec:sketch}
   We denote by $(\Omega,\mathcal{F},\mathproba{P})$ the probability space on which the fBm is defined.
   Let $(\mathcal{F}_t)_{t\geqslant0}$ be the natural filtration associated to the two-sided Brownian motion $(W_t)_{t\in\R}$ induced by the Mandelbrot-Van Ness representation (see \eqref{eq:mandelbrot}). As in \cite{varvenne2019concentration}, let us first introduce the following decomposition. For all $k\in\N$, set
   \begin{equation}\label{def_martingale}
   M_k:=\e[F_X~|~\mathcal{F}_{k}]
   \end{equation}
   {where we recall that $F_X=F(X_{t_1},\dots,X_{t_n})$ and $(X_t)_{t\geq0}$ is the stationary solution of \eqref{modeldiffusion}.}
   Then, we have:
   \begin{equation}\label{eq:decomposition_sum_martingale_inc}
   F_X-\e[F_X]=M_{\lceil t_n\rceil}=\sum_{k=1}^{\lceil t_n\rceil}M_k-M_{k-1}+~\e[F_X~|~\mathcal{F}_0]-\e[F_X].
   \end{equation}
   Our strategy of proof is decomposed as follows, we show that :
   \begin{enumerate}
   \item[(1)] for all $ 1\leq k\leq \lceil t_n\rceil $, there exists $u_k^{(n)}>0$ (deterministic) such that for all $\lambda>0$,
   $$\e\left[\exp\left(\lambda(M_k-M_{k-1})\right)~|~\mathcal{F}_{k-1}\right]\leqslant \exp(\lambda^2 u_k^{(n)})\quad a.s.$$
   and then
   $$\e\left[\exp\left.\left(\lambda\sum_{k=1}^{\lceil t_n \rceil}(M_k-M_{k-1})\right)~\right|~\mathcal{F}_{0}\right]\leqslant \exp\left(\lambda^2 \sum_{k=1}^{\lceil t_n \rceil}u_k^{(n)}\right)\quad a.s.$$
   \item[(2)] there exists $u_0^{(n)}>0$ (deterministic) such that for all $\lambda>0$,
   $$\e\left[\exp\left(\lambda(\e[F_X~|~\mathcal{F}_0]-\e[F_X])\right)\right]\leqslant \exp(\lambda^2 u_0^{(n)})\quad a.s.$$
   \end{enumerate}
   From $(1)$ and $(2)$, we finally get
   $$\e\left[\exp\left(\lambda(F_X-\e[F_X])\right)\right]\leqslant \exp\left(\lambda^2\sum_{k=0}^{\lceil t_n\rceil} u_k^{(n)}\right).$$

   We are thus reduced to study conditional exponential moments in $(1)$ and $(2)$. The related results are given in Proposition \ref{prop:exp_moments_sum_martingale_inc} and \ref{prop:controlfo} (and Theorem \ref{main_thm_concentration} easily follows). \\

   \noindent In order to provide such exponential bounds, we rely on the following key
   lemma (see Lemma 1.5 in Chapter 1 of \cite{rigollet2017high}):
   \begin{Lemma}\label{lem:subgaussian} Let $Z$ be a \textbf{centered} random variable on $\R$ such that there exists $\zeta>0$ such that for all $p\geq2$,
   $$\e[|Z|^p]\leq \zeta^{\frac{p}{2}}p\Gamma\left(\frac{p}{2}\right).$$
   Then, for all $\lambda>0$, we have
   $$\e[\exp(\lambda Z)]\leq \exp(2\lambda^2\zeta).$$
   \end{Lemma}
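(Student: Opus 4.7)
The plan is to prove this via the standard power-series (Chernoff-type) expansion of the moment generating function, exploiting the centering of $Z$ to kill the linear term and then dominating the resulting series by the Taylor expansion of $\exp(2\lambda^2\zeta)$.

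First I would expand
\[
\e[\exp(\lambda Z)] = 1 + \lambda \e[Z] + \sum_{p\ge 2}\frac{\lambda^p \e[Z^p]}{p!} = 1 + \sum_{p\ge 2}\frac{\lambda^p \e[Z^p]}{p!},
\]
using $\e[Z]=0$ to cancel the first-order term. (Exchanging sum and expectation will be justified a posteriori by dominated convergence once the majorizing series is shown to converge.) Bounding $|\e[Z^p]|\le\e[|Z|^p]$ for $p\ge 2$ and invoking the moment hypothesis gives the deterministic majorant
\[
\e[\exp(\lambda Z)] \le 1 + \sum_{p\ge 2}\frac{\lambda^p \zeta^{p/2}\, p\,\Gamma(p/2)}{p!} = 1 + \sum_{p\ge 2}\frac{(\lambda\sqrt{\zeta})^p\,\Gamma(p/2)}{(p-1)!},
\]
using $p/p! = 1/(p-1)!$.

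The second step is purely analytic: I would show that this majorant is bounded by $\exp(2\lambda^2\zeta) = \sum_{m\ge 0}(2\lambda^2\zeta)^m/m!$. Writing $p\,\Gamma(p/2) = 2\,\Gamma(p/2+1)$ and splitting the sum by the parity of $p$, the even part $p=2m$ contributes coefficients $2(m!)/(2m)!$ in $(\lambda^2\zeta)^m$, which is dominated by $2^m/m!$ up to a factor decaying in $m$ (this is essentially the inequality $\binom{2m}{m}\ge 4^m/(2m+1)$). The odd part $p=2m+1$, using the closed form $\Gamma(m+1/2)=\sqrt{\pi}(2m)!/(4^m m!)$, collapses to a series of the form $\sqrt{\pi\zeta}\,\lambda\sum_{m\ge 0}(\lambda^2\zeta/4)^m/m!$, which is absorbed by the Gaussian series via an elementary Young-type bound $\sqrt{\pi\zeta}\,\lambda\,e^{\lambda^2\zeta/4}\le \tfrac12 e^{2\lambda^2\zeta}+\text{const}$.

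The main obstacle is purely bookkeeping: tracking the constants tightly enough to recover exactly the prefactor $2$ in the exponent of the stated bound. The cleanest accounting comes from Stirling's formula, which yields $\Gamma(p/2)/(p-1)! \asymp (2e/p)^{p/2}/\sqrt{p}$, so that the general term of the majorant behaves like $(2e\lambda^2\zeta/p)^{p/2}$ (up to polynomial factors), while the target Gaussian coefficient of order $p$ behaves like $(4e\lambda^2\zeta/p)^{p/2}$ — a factor $2^{p/2}$ of slack that comfortably absorbs both the combinatorial bookkeeping and the odd-order contribution. Putting the two steps together yields $\e[\exp(\lambda Z)]\le\exp(2\lambda^2\zeta)$ for every $\lambda>0$, which is the announced statement.
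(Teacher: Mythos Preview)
The paper does not actually give a proof of this lemma: it simply cites Lemma~1.5 in Chapter~1 of \citet{rigollet2017high}. Your approach---expand $\e[e^{\lambda Z}]$ as a power series, kill the linear term by centering, dominate the moments via the hypothesis, and compare coefficients with those of $e^{2\lambda^2\zeta}$---is exactly the argument found in that reference, so in that sense your proposal matches the paper's (outsourced) proof.

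Your outline is correct, but the final paragraph is too loose to count as a complete argument. The Stirling heuristic ``factor $2^{p/2}$ of slack'' is suggestive, not a proof, and the Young-type absorption of the odd part is asserted rather than verified. A clean way to finish: writing $x=\lambda\sqrt{\zeta}$ and splitting by parity, the even part $p=2m$ gives coefficients $\dfrac{(m-1)!}{(2m-1)!}=\dfrac{2}{\binom{2m}{m}m!}\le\dfrac{1}{m!}$, so the even contribution is at most $e^{x^2}-1$; the odd part $p=2m+1$ sums exactly to $\sqrt{\pi}\,x\,(e^{x^2/4}-1)$ as you noted. It then remains only to check the elementary scalar inequality
\[
e^{x^2}+\sqrt{\pi}\,x\bigl(e^{x^2/4}-1\bigr)\le e^{2x^2}\qquad(x>0),
\]
which follows for instance from $e^{2x^2}-e^{x^2}=e^{x^2}(e^{x^2}-1)\ge x^2 e^{x^2}$ together with $\sqrt{\pi}\,x(e^{x^2/4}-1)\le \sqrt{\pi}\,x\cdot\tfrac{x^2}{4}e^{x^2/4}\le x^2 e^{x^2}$ (the last step using $\sqrt{\pi}\,x\le 4e^{3x^2/4}$). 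Filling in this one-line check would make your argument complete.
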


   \subsection{Part 1: sum of martingale increments}
   In this subsection, our purpose is to prove the following result :

   \begin{Proposition}\label{prop:exp_moments_sum_martingale_inc}
   Assume $\HUN$ and $\HDEUX$. Let $H\in(0,1)$.
   There exists {$K=K(H,L,\alpha,\|\sigma\|)>0$} such that for all $\lambda>0$,
   \begin{equation}
   \e\left[\exp\left.\left(\lambda\left(\sum_{k=1}^{\lceil t_n\rceil}M_k-M_{k-1}\right)\right)~\right|~\mathcal{F}_{0}\right]\leqslant \exp\left(K\lambda^2\|F\|^2_{\rm Lip}\sum_{k=1}^{\lceil t_n\rceil}\psi^2_{n,k}\right)\quad a.s.
   \end{equation}
   where $$\psi_{n,k}:=\left\lfloor\frac{k}{\Delta_n}\right\rfloor- \left\lceil\frac{k-1}{\Delta_n}\right\rceil+\sum_{i=\left\lceil\frac{k}{\Delta_n}\right\rceil}^{n}u_i^{H-3/2},$$
   $u_i:=t_i-k+1=i\Delta_n-k+1$ and $M_k$ is defined by~\eqref{def_martingale} .\\
   Moreover, there exists $c=c_{}(H)>0$ such that
   $$\sum_{k=1}^{\lceil t_n\rceil}\psi^2_{n,k}\leq c\left\{\begin{array}{lll}
   n\Delta_n^{-1}&\text{ if }& H\in(0,1/2)\\
   n^{2H}\Delta_n^{2H-2}&\text{ if }& H\in(1/2,1).\end{array}\right.$$
   \end{Proposition}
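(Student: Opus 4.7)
The plan is to obtain the conditional subgaussian estimate for each martingale increment $M_k-M_{k-1}$, iterate via the tower property, and then estimate the sum $\sum_k \psi_{n,k}^2$. To begin, I fix $k \in \{1,\dots,\lceil t_n\rceil\}$ and introduce an independent copy $W'$ of the Brownian path $W|_{[k-1,k]}$ (independent of $\mathcal{F}$). Let $\tilde W$ be the path obtained from $W$ by replacing its restriction to $[k-1,k]$ by $W'$, and let $\tilde X$ be the stationary solution driven by $\tilde W$ with the \emph{same} past as $X$ on $(-\infty,k-1]$. Since $\tilde X$ and $X$ share the information in $\mathcal{F}_{k-1}$ and the resampled piece is independent of $\mathcal{F}_k$, one checks that $M_{k-1}=\e[F(\tilde X_{t_1},\dots,\tilde X_{t_n})\mid \mathcal{F}_k]$, so
\begin{equation*}
M_k-M_{k-1}=\e\bigl[F_X-F_{\tilde X}\mid \mathcal{F}_k\bigr].
\end{equation*}
By conditional Jensen and the Lipschitz property of $F$ with respect to $d_n$,
\begin{equation*}
\e\bigl[|M_k-M_{k-1}|^p\mid \mathcal{F}_{k-1}\bigr]\le \|F\|_{\mathrm{Lip}}^p\,\e\Bigl[\bigl(\textstyle\sum_{i=1}^n|X_{t_i}-\tilde X_{t_i}|\bigr)^{p}\,\Big|\,\mathcal{F}_{k-1}\Bigr].
\end{equation*}

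The heart of the proof is to control $|X_{t_i}-\tilde X_{t_i}|$ pathwise. Since $X$ and $\tilde X$ coincide on $(-\infty,k-1]$ and $B^H_{k-1}=\tilde B^H_{k-1}$ (the Mandelbrot--Van Ness kernel only sees the past), for $t\ge k-1$ the difference $Y_t=X_t-\tilde X_t$ satisfies $Y_t=\int_{k-1}^t(b(X_s)-b(\tilde X_s))\,\mathrm{d}s+\sigma\Delta^H_t$, where $\Delta^H_t=B^H_t-\tilde B^H_t$ is a centered Gaussian. Using the dissipativity $\HUN$ together with the Lipschitz property $\HDEUX$ (via the standard trick $|Y_t|^2=|U_t+\sigma\Delta^H_t|^2$ with $U_t=\int_{k-1}^t(b(X_s)-b(\tilde X_s))\,\mathrm{d}s$, differentiating $|U_t|^2$ and using $\langle Y,b(X)-b(\tilde X)\rangle\le -\alpha|Y|^2$), one obtains a pointwise majoration of $|Y_{t_i}|$ by a Gaussian functional whose $L^2$-norm involves only $\sigma$ and the restriction of $\Delta^H$ to $[k-1,t_i]$. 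Computing the variance via the Mandelbrot--Van Ness representation, $\Delta^H_{t_i}=c_H\int_{k-1}^{k}\bigl[(t_i-s)_+^{H-1/2}-(-s)_+^{H-1/2}\bigr]\bigl(\mathrm{d}W_s-\mathrm{d}W'_s\bigr)$, distinguishing the cases $t_i\in[k-1,k]$ (direct, of order $1$) and $t_i>k$ (where the variance is $\lesssim u_i^{2H-1}$ and, after differentiation of the kernel or a pointwise Lipschitz-in-$s$ estimate, the contribution to $|Y_{t_i}|$ is of order $u_i^{H-3/2}$) yields exactly the quantity $\psi_{n,k}$ as the $L^2$-norm of $\sum_i|Y_{t_i}|$ up to a constant depending on $H,L,\alpha,\|\sigma\|$. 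A Gaussian moment computation (moments of $|Z|$, $Z$ Gaussian, are of the form $\zeta^{p/2}p\Gamma(p/2)$ up to absolute constants) together with Minkowski's inequality then casts the conditional $p$-th moment of $M_k-M_{k-1}$ in the form required by Lemma~\ref{lem:subgaussian}, yielding
\begin{equation*}
\e\bigl[\exp(\lambda(M_k-M_{k-1}))\mid\mathcal{F}_{k-1}\bigr]\le \exp\bigl(K\lambda^2\|F\|_{\mathrm{Lip}}^2\psi_{n,k}^2\bigr)\quad a.s.
\end{equation*}
Iterating this bound via the tower property ($\mathcal{F}_{k-1}\subset\mathcal{F}_k$ and $M_k-M_{k-1}$ is $\mathcal{F}_k$-measurable) produces the conditional exponential bound for the full telescoping sum with variance proxy $K\|F\|_{\mathrm{Lip}}^2\sum_k\psi_{n,k}^2$.

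It remains to estimate $\sum_{k=1}^{\lceil t_n\rceil}\psi_{n,k}^2$. Split $\psi_{n,k}=A_{n,k}+B_{n,k}$, where $A_{n,k}=\lfloor k/\Delta_n\rfloor-\lceil(k-1)/\Delta_n\rceil$ counts the sampling points $t_i$ in $[k-1,k]$ and $B_{n,k}=\sum_{i\ge k/\Delta_n}u_i^{H-3/2}$. We have $A_{n,k}\lesssim \Delta_n^{-1}+1$, so $\sum_k A_{n,k}^2\lesssim n\Delta_n\cdot\Delta_n^{-2}=n\Delta_n^{-1}$. For $B_{n,k}$, a Riemann sum comparison gives $B_{n,k}\sim\Delta_n^{-1}\int_1^{n\Delta_n-k+1}t^{H-3/2}\,\mathrm{d}t$: the integral is bounded when $H<1/2$, producing $B_{n,k}\lesssim \Delta_n^{-1}$, and grows like $(n\Delta_n)^{H-1/2}$ when $H>1/2$, producing $B_{n,k}\lesssim n^{H-1/2}\Delta_n^{H-3/2}$. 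Squaring and summing over $k\in\{1,\dots,\lceil n\Delta_n\rceil\}$ yields the announced dichotomy $n\Delta_n^{-1}$ versus $n^{2H}\Delta_n^{2H-2}$.

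The main obstacle is the pathwise control of $|Y_{t_i}|$ in the proof of (1): combining the dissipativity with the non-differentiable fractional driver, and then identifying the Gaussian variance proxy that produces exactly the weights in $\psi_{n,k}$ (in particular the exponent $H-3/2$, which comes from the differentiated Mandelbrot--Van Ness kernel). Once this deterministic-Gaussian reduction is achieved, the remaining steps (Lemma~\ref{lem:subgaussian}, tower iteration, Riemann sums) are routine.
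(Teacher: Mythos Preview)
Your proposal is correct and follows essentially the same route as the paper: the coupling/resampling representation of $M_k-M_{k-1}$, a pathwise bound on $|X_{t_i}-\tilde X_{t_i}|$ combining $\HUN$--$\HDEUX$ with a Gronwall argument (the paper's Lemma~\ref{lem:control_Y_Ytilde}), the identification of the weights $1$ and $u_i^{H-3/2}$ via the differentiated Mandelbrot--Van Ness kernel, the passage to conditional exponential moments through Lemma~\ref{lem:subgaussian}, tower iteration, and finally the Riemann-sum dichotomy for $\sum_k\psi_{n,k}^2$.

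One small point of precision: your ``standard trick'' of differentiating $|U_t|^2$ is slightly off the mark. In the paper the key observation is that for $u>1$ the noise difference $\Delta^H_u=\int_0^1(u-s)^{H-1/2}\,\mathrm{d}(W^{(k)}-\tilde w^{(k)})_s$ is \emph{smooth} in $u$ (the integration domain is fixed), so one may differentiate $|Y_u|^2$ directly and use $\langle Y_u,b(X_u)-b(\tilde X_u)\rangle\le -\alpha|Y_u|^2$ together with $\frac{\mathrm{d}}{\mathrm{d}u}\Delta^H_u=(H-\tfrac12)\int_0^1(u-s)^{H-3/2}\,\mathrm{d}(W^{(k)}-\tilde w^{(k)})_s$; Gronwall then delivers the $u^{H-3/2}$ decay against $\sup_{[0,1]}|W^{(k)}-\tilde w^{(k)}|$. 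Your sketch conflates this with an argument on $U_t$, but the substance is the same.
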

   \begin{Remark}\label{rem:constant1} Following carefully the constants in the proof of this proposition, one easily checks that $(L,\alpha,s)\mapsto K(H,L,\alpha,s)$ is bounded on every compact set of $\ER_+\times\ER_+^*\times\ER_+$.
   \end{Remark}
   Through equation \eqref{modeldiffusion} and the fact that $b$ is Lipschitz continuous, for all $t\geqslant0$,~ $Y_{t}$ can be seen as a functional of the time $t$, the initial condition $X_0$ and the Brownian motion $(W_s)_{s\leq t}$. Denote by $\Phi:\R_+\times\R^d\times\mathcal{C}(\R,\R^d)\to\R^d$ this functional, we then have
   \begin{equation}
   \forall t\geqslant0,\quad X_{t}:=\Phi_t(X_0,(W_s)_{s\leq t}).
   \end{equation}

   \noindent Now, let $k\geqslant1$, we have
   \begin{align}\label{eq:majo_M}
   &|M_k-M_{k-1}|\nonumber\\
   &=|\e[F_X|\F_k]-\e[F_X|\F_{k-1}]|\nonumber\\
   &\leqslant\|F\|_{\rm Lip}\int_{\Omega}\sum_{t_i\geq k-1}\left|\Phi_{t_i}\left(X_0,(W_s)_{s\leq k}\sqcup\tilde{w}_{[k,t_i]}\right)-\Phi_{t_i}\left(X_0,(W_s)_{s\leq k-1}\sqcup\tilde{w}_{[k-1,t_i]}\right)\right|\p_W({\rm d}\tilde{w}).
   \end{align}

   Let us introduce now some notations. First, for all $t\geqslant k-1$ set $u:=t-k+1$, then for all $u\geqslant0$, we define
   $$
   Y_u:=\left\{\begin{array}{lll}
   \Phi_{u+k-1}\left(X_0,(W_s)_{s\leq k}\sqcup(\tilde{w}_s)_{s\in[k,u+k-1]}\right) & \text{if } u\geqslant1 \\
   \Phi_{u+k-1}\left(X_0,(W_s)_{s\leq u+k-1}\right) & \text{otherwise},
   \end{array}\right.
   $$
   and
   $$
   \tilde{Y}_u:=\Phi_{u+k-1}\left(X_0,(W_s)_{s\leq k-1}\sqcup(\tilde{w}_s)_{s\in[k-1,u+k-1]}\right).
   $$
   By using equation \eqref{modeldiffusion}, we then have
   \begin{align}\label{eq:sde_Y_tildeY}
   Y_{u}-\tilde{Y}_u
   =\int_{0}^u b(Y_s)-b(\tilde{Y}_s){\rm d}s+c_H\sigma\int_{0}^{1\wedge u}(u-s)^{H-\frac{1}{2}}{\rm d}(W^{(k)}-\tilde{w}^{(k)})_s.
   \end{align}
   where we have set $(W^{(k)}_s)_{s\geqslant 0}:=(W_{s+k-1}-W_{k-1})_{s\geqslant0}$ which is a Brownian motion independent from $\F_{k-1}$ and $(\tilde{w}^{(k)}_s)_{s\geqslant0}:=(\tilde{w}_{s+k-1}-\tilde{w}_{k-1})_{s\geqslant0}$.\\

   Finally, we have the following inequality for all $k\geq1$:
   \begin{equation}\label{eq:majo_M_Y}
   |M_k-M_{k-1}|\leq \|F\|_{\rm Lip}\int_{\Omega}\sum_{u_i\geq 0}\left|Y_{u_i}-\tilde{Y}_{u_i}\right|\p_W({\rm d}\tilde{w})
   \end{equation}
   where $u_i:=t_i-k+1=i\Delta_n-k+1$.\\

   In the next section, we proceed to a control of the quantity $|Y_u-\tilde{Y}_u|$.

   \subsubsection{Control lemma}

   \begin{Lemma}\label{lem:control_Y_Ytilde} We have the two following inequalities:
   \begin{itemize}
   \item[$(i)$] for all $u\in[0,1]$, there exists {$K=K(H,L,\alpha,\|\sigma\|)>0$} such that,
   \begin{equation*}
   |Y_{u}-\tilde{Y}_{u}|\leqslant {K}\sup\limits_{v\in[0,2]}\left|G_v^{(k)}(W-\tilde{w})\right|
   \end{equation*}
   where $$G_v^{(k)}(W-\tilde{w}):=\int_{0}^{1\wedge v}(v-s)^{H-\frac{1}{2}}{\rm d}(W^{(k)}-\tilde{w}^{(k)})_s,$$
   \item[$(ii)$] for all $u\geq1$, there exists {$K=K(H,L,\alpha,\|\sigma\|)>0$} such that,
   \begin{equation*}
   |Y_{u}-\tilde{Y}_{u}|\leqslant {K}~u^{H-\frac{3}{2}}\sup\limits_{s\in[0,1]}|W_s^{(k)}-\tilde{w}^{(k)}_s|
   \end{equation*}
   \end{itemize}
   \end{Lemma}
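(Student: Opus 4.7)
Equation~\eqref{eq:sde_Y_tildeY} provides the integral representation
\[
D_u := Y_u - \tilde Y_u = \int_0^u \bigl[b(Y_s) - b(\tilde Y_s)\bigr]\,ds + c_H\sigma\, G_u^{(k)}(W-\tilde w),\qquad u\ge 0,
\]
with $G_v^{(k)}$ extended to all $v\ge 0$ as in (i). Write $\xi_s := W^{(k)}_s - \tilde w^{(k)}_s$. The plan is to use the Lipschitz assumption \HDEUX{} on short horizons and the contraction \HUN{} on long horizons. Part (i) is then immediate: for $u\in[0,1]$, \HDEUX{} gives $|D_u|\le L\int_0^u |D_s|\,ds + c_H\|\sigma\|\,|G_u^{(k)}|$, and Gronwall's lemma on $[0,1]$ yields $|D_u|\le (1+Le^L)c_H\|\sigma\|\sup_{v\in[0,1]}|G_v^{(k)}|$, which implies the claim since $[0,1]\subset[0,2]$.

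For part (ii), I would split at $u=2$. On $[1,2]$, since $u^{H-3/2}\ge 2^{H-3/2}$, the target $|D_u|\le Ku^{H-3/2}\sup|\xi|$ is equivalent (up to a constant depending only on $H$) to the uniform bound $|D_u|\le K'\sup|\xi|$. I would extend the Gronwall argument of (i) to $[0,2]$, obtaining $|D_u|\le K_2\sup_{v\in[0,2]}|G_v^{(k)}|$, and then compose with a pathwise estimate of the form $\sup_{v\in[0,2]}|G_v^{(k)}|\le C(H)\sup_{s\in[0,1]}|\xi_s|$ proved by careful integration by parts in the defining stochastic integral.

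On $[2,\infty)$, for $u>1$ the map $u\mapsto Z_u := c_H\sigma G_u^{(k)}$ is smooth, with derivative $\dot Z_u = c_H\sigma(H-\tfrac12)\int_0^1(u-s)^{H-3/2}\,d\xi_s$. Starting from $\tfrac{d}{du}|D_u|^2 = 2\langle D_u,b(Y_u)-b(\tilde Y_u)\rangle + 2\langle D_u,\dot Z_u\rangle$, applying \HUN{} and Cauchy--Schwarz, and then taking a square root, I would obtain the scalar inequality $\tfrac{d}{du}|D_u|\le -\alpha|D_u| + |\dot Z_u|$ for $u>1$, hence
\[
|D_u|\le e^{-\alpha(u-2)}|D_2| + \int_2^u e^{-\alpha(u-s)}|\dot Z_s|\,ds.
\]
Integration by parts in the stochastic integral defining $\dot Z_s$ (using $\xi_0=0$) gives $|\dot Z_s|\le C(H,\|\sigma\|)(s-1)^{H-3/2}\sup|\xi|$, which for $s\ge 2$ is bounded by $C'(H,\|\sigma\|)\,s^{H-3/2}\sup|\xi|$. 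Splitting the convolution at $s=u/2$ (the polynomial factor dominates near $s=u$, the exponential dominates for $s\le u/2$) yields $\int_2^u e^{-\alpha(u-s)}s^{H-3/2}\,ds\le C(\alpha,H)u^{H-3/2}$, and since $e^{-\alpha(u-2)}\le C''(\alpha,H)u^{H-3/2}$ on $[2,\infty)$ and $|D_2|\le C\sup|\xi|$ from the intermediate regime, (ii) follows for $u\ge 2$.

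The main obstacle is the pathwise bound $\sup_{v\in[0,2]}|G_v^{(k)}|\le C(H)\sup|\xi|$. Naive integration by parts produces a boundary term $(v-1)^{H-1/2}\xi_1$ that explodes as $v\downarrow 1$ when $H<1/2$, so a compensating change of variable (e.g.\ $s\mapsto 1-s$, transferring the singularity to the origin where $\xi$ vanishes) combined with local H\"older regularity of the Brownian motion seems needed. A related care is required to justify the integration-by-parts estimate on $|\dot Z_s|$ uniformly up to $s=2$.
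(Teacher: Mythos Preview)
Your strategy matches the paper's: Gronwall via \HDEUX{} on $[0,2]$, then the contraction \HUN{} on $[2,\infty)$ combined with integration by parts on $\dot Z$ and a convolution estimate to extract the $u^{H-3/2}$ decay. The paper works with $|D_u|^2$ on $[2,\infty)$ (using Young's inequality $\langle a,b\rangle\le \tfrac{\alpha}{2}|a|^2+\tfrac{1}{2\alpha}|b|^2$) rather than passing to $|D_u|$ as you do, and it bounds $\int_2^u e^{-\alpha(u-v)}(v-1)^{2H-3}\,dv$ via a short dedicated lemma instead of your split at $u/2$; these are cosmetic differences and lead to the same conclusion. Your worry about the IBP estimate on $\dot Z_s$ ``uniformly up to $s=2$'' is unnecessary, since your integral starts at $s=2$, where $(s-1)^{H-3/2}$ is finite.

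The obstacle you flag for the intermediate regime is real, and the paper does not address it. For $u\in[1,2]$ the paper proceeds exactly as you propose --- extend the Gronwall bound to $[0,2]$ and then claim $|G_v^{(k)}|\le K_H\sup_{s\in[0,1]}|\xi_s|$ via the identity
\[
G_v=(v-1\wedge v)^{H-\frac12}\,\xi_{1\wedge v}+\bigl(H-\tfrac12\bigr)\int_0^{1\wedge v}(v-s)^{H-\frac32}\xi_s\,ds,
\]
bounding each term separately. This is precisely the naive integration by parts you identify as failing for $H<\tfrac12$: the two terms diverge individually as $v\to1$, and the formula is not even justified for $v\le 1$, so no uniform constant $K_H$ over $v\in[0,2]$ is obtained this way. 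Your suggested repair --- anchor the IBP at the singular endpoint so that the boundary contribution becomes $(v-s)^{H-1/2}(\xi_s-\xi_v)\to0$, then invoke the $(\tfrac12-\varepsilon)$-H\"older modulus of $\xi$ --- does close the gap, but it delivers a bound in terms of the H\"older seminorm $[\xi]_{1/2-\varepsilon}$ rather than $\sup|\xi|$, so strictly it proves a slightly different inequality than the one stated in part~(ii). Since this seminorm is also sub-Gaussian for Brownian motion on $[0,1]$, the downstream use in Proposition~\ref{prop:moments_martingale_inc} goes through unchanged.
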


   \begin{proof}
   $\rhd$ First case: let $u\in[0,1]$.\\
   By the triangle inequality and assumption $\HDEUX$, we have in \eqref{eq:sde_Y_tildeY}
   \begin{equation*}
   |Y_{u}-\tilde{Y}_{u}|\leqslant L\int_{0}^u |Y_s-\tilde{Y}_s|{\rm d}s+c_H\|\sigma\|\left|\int_{0}^{1\wedge u}(u-s)^{H-\frac{1}{2}}{\rm d}(W^{(k)}-\tilde{w}^{(k)})_s\right|
   \end{equation*}
   Then, from Gronwall's lemma, we deduce the following
   \begin{align}\label{eq:proof4}
   &|Y_{u}-\tilde{Y}_{u}|\nonumber\\
   &\leqslant c_H\|\sigma\|\left|\int_{0}^{1\wedge u}(u-s)^{H-\frac{1}{2}}{\rm d}(W^{(k)}-\tilde{w}^{(k)})_s\right| \nonumber\\
   &\quad\quad\quad\quad\quad\quad+c_H\|\sigma\|L\int_{0}^u\left|\int_{0}^{1\wedge v}(v-s)^{H-\frac{1}{2}}{\rm d}(W^{(k)}-\tilde{w}^{(k)})_s\right|e^{L(u-v)}{\rm d}v\nonumber\\
   &\leqslant c_H\|\sigma\|\sup\limits_{v\in[0,2]}\left|\int_{0}^{1\wedge v}(v-s)^{H-\frac{1}{2}}{\rm d}(W^{(k)}-\tilde{w}^{(k)})_s\right|\left(1+\left[-e^{L(u-v)}\right]_{0}^u\right)\nonumber\\
   &\leqslant c_H\|\sigma\| e^{2L}\sup\limits_{v\in[0,2]}\left|\int_{0}^{1\wedge v}(v-s)^{H-\frac{1}{2}}{\rm d}(W^{(k)}-\tilde{w}^{(k)})_s\right|.
   \end{align}
   and Lemma \ref{lem:control_Y_Ytilde} is shown for $u\in[0,1]$.\\

   $\rhd$ Second case: let $u\geqslant1$.\\
   First, if $u\in[1,2]$, we have $|Y_u-\tilde{Y}_u|\leqslant K_H\sup\limits_{s\in[0,1]}|W^{(k)}_s-\tilde{w}^{(k)}_s|$ by the first part of this proof combined with the following inequality :
   \begin{align*}
   &\left|\int_{0}^{1\wedge v}(v-s)^{H-\frac{1}{2}}{\rm d}(W^{(k)}-\tilde{w}^{(k)})_s\right|\\
   &=\left|(v-1\wedge v)^{H-\frac{1}{2}}(W^{(k)}_{1\wedge v}-\tilde{w}^{(k)}_{1\wedge v})+(H-1/2)\int_{0}^{1\wedge v}(v-s)^{H-\frac{3}{2}}(W^{(k)}_s-\tilde{w}^{(k)}_s){\rm d}s\right|\\
   &\leqslant K_H\sup\limits_{s\in[0,1]}|W^{(k)}_s-\tilde{w}^{(k)}_s|.
   \end{align*}
   Now, let us treat the case $u\geq2$.
   In the following inequalities, we use assumption $\HUN$ on the function $b$ and the elementary Young inequality $\langle a,b\rangle\leqslant\frac{1}{2}\left(\varepsilon|a|^2+\frac{1}{\varepsilon}|b|^2\right)$ with $\varepsilon=2\alpha$.
   \begin{align}
   &\frac{{\rm d}}{{\rm d}u}|Y_{u}-\tilde{Y}_{u}|^2\nonumber\\
   &=2\langle Y_u-\tilde{X}_u,b(Y_u)-b(\tilde{Y}_u)\rangle+c_H(2H-1)\langle Y_u-\tilde{Y}_u,~\sigma\int_0^1(u-s)^{H-\frac{3}{2}}{\rm d}(W^{(k)}-\tilde{w}^{(k)})_s\rangle\nonumber\\
   &\leqslant -2\alpha|Y_{u}-\tilde{Y}_{u}|^2+\alpha|Y_{u}-\tilde{Y}_{u}|^2+\frac{c_H^2(2H-1)^2\|\sigma\|^2}{4\alpha}\left|\int_0^1(u-s)^{H-\frac{3}{2}}{\rm d}(W^{(k)}-\tilde{w}^{(k)})_s\right|^2\nonumber\\
   &\leqslant -\alpha|Y_{u}-\tilde{Y}_{u}|^2+\frac{c_H^2(2H-1)^2\|\sigma\|^2}{4\alpha}\left|\int_0^1(u-s)^{H-\frac{3}{2}}{\rm d}(W^{(k)}-\tilde{w}^{(k)})_s\right|^2.\nonumber
   \end{align}
   We then apply Gronwall's lemma to obtain
   \begin{equation}\label{eq:proof2}
   |Y_{u}-\tilde{Y}_{u}|^2\leqslant e^{-\alpha(u-2)}|Y_2-\tilde{Y}_2|^2+\alpha_H\int_{2}^ue^{-\alpha(u-v)}\left|\int_0^1(v-s)^{H-\frac{3}{2}}{\rm d}(W^{(k)}-\tilde{w}^{(k)})_s\right|^2{\rm d}v
   \end{equation}
   with $\alpha_H:=\frac{c_H^2(2H-1)^2\|\sigma\|^2}{4\alpha}$.\\
   Now, we set $~\varphi_k(v):=\int_0^1(v-s)^{H-\frac{3}{2}}{\rm d}(W^{(k)}-\tilde{w}^{(k)})_s~$ and we apply an integration by parts to $\varphi_k$ taking into account that $W^{(k)}_0=\tilde{w}^{(k)}_0=0$:
   \begin{align*}
   \varphi_k(v)=(v-1)^{H-\frac{3}{2}}(W^{(k)}_1-\tilde{w}^{(k)}_1)-(3/2-H)\int_0^1(v-s)^{H-\frac{5}{2}}(W^{(k)}_s-\tilde{w}^{(k)}_s){\rm d}s.
   \end{align*}
   And then
   \begin{align}\label{eq:proof1}
   &\int_{2}^ue^{-\alpha(u-v)}\left|\varphi_k(v)\right|^2{\rm d}v\nonumber\\
   &\leqslant 2|W^{(k)}_1-\tilde{w}^{(k)}_1|^2\int_{2}^u(v-1)^{2H-3}e^{-\alpha(u-v)}{\rm d}v\nonumber\\
   &\quad\quad\quad+2(3/2-H)^2\int_{2}^ue^{-\alpha(u-v)}\left|\int_0^1(v-s)^{H-\frac{5}{2}}(W^{(k)}_s-\tilde{w}^{(k)}_s){\rm d}s\right|^2{\rm d}v\nonumber\\
   &\leqslant 2|W^{(k)}_1-\tilde{w}^{(k)}_1|^2\int_{2}^u(v-1)^{2H-3}e^{-\alpha(u-v)}{\rm d}v\nonumber\\
   &\quad\quad\quad+2(3/2-H)^2\left|\int_0^1(W^{(k)}_s-\tilde{w}^{(k)}_s){\rm d}s\right|^2\int_{2}^u(v-1)^{2H-5}e^{-\alpha(u-v)}{\rm d}v
   \end{align}

   \begin{Lemma}\label{lem:control_integral} Let $\alpha,\beta>0$. Then, for all $u\geqslant2$,
   \begin{equation*}
   e^{-\alpha u}\int_{2}^u (v-1)^{-\beta}e^{\alpha v}{\rm d}v\leqslant\max\left(e^{-\alpha(u-2)}(u-1),~(u-1)^{-\beta+1}\right).
   \end{equation*}
   \end{Lemma}

   In the right hand side of \eqref{eq:proof1}, we apply an integration by parts on the first term and then we use Lemma \ref{lem:control_integral}:
   \begin{align*}
   &\int_{2}^u(v-1)^{2H-3}e^{-\alpha(u-v)}{\rm d}v\\
   &=\frac{1}{\alpha}\left((u-1)^{2H-3}-e^{-\alpha(u-2)}+(3-2H)e^{-\alpha u}\int_{2}^u (v-1)^{2H-4}e^{\alpha v}{\rm d}v\right)\nonumber\\
   &\leqslant\frac{1}{\alpha}\left((u-1)^{2H-3}-e^{-\alpha(u-2)}+(3-2H)\max\left(e^{-\alpha(u-2)}(u-1),~(u-1)^{2H-3}\right)\right)\nonumber\\
   &\leqslant C'_H(u-1)^{2H-3}
   \end{align*}
   where $C'_H>0$ is some constant. Finally, by using Lemma \ref{lem:control_integral} also on the second term in \eqref{eq:proof1}, we finally get the existence of a constant $C''_H>0$ such that:
   \begin{equation}\label{eq:proof3}
   \int_{2}^ue^{-\alpha(u-v)}\left|\varphi_k(v)\right|^2{\rm d}v\leqslant C''_H(u-1)^{2H-3}\sup\limits_{s\in[0,1]}|W^{(k)}_s-\tilde{w}^{(k)}_s|^2.
   \end{equation}
   Now, putting inequality \eqref{eq:proof3} into \eqref{eq:proof2} and taking the square root, we finally get:
   \begin{equation}\label{eq:proof5}
   |Y_{u}-\tilde{Y}_{u}|\leqslant e^{-\frac{\alpha}{2}(u-2)}|Y_2-\tilde{Y}_2|+C^{(3)}_H(u-1)^{H-\frac{3}{2}}\sup\limits_{s\in[0,1]}|W^{(k)}_s-\tilde{w}^{(k)}_s|.
   \end{equation}
   On the one hand, we can note that $e^{-\frac{\alpha}{2}(u-2)}\leqslant C' u^{H-\frac{3}{2}}$ for all $u\geqslant2$. On the other hand, we have $|Y_2-\tilde{Y}_2|\leqslant K_H\sup\limits_{s\in[0,1]}|W^{(k)}_s-\tilde{w}^{(k)}_s|$.

   These two facts combined with \eqref{eq:proof5} conclude the proof.

   \end{proof}

   \subsubsection{Conditional exponential moments of the martingale increments}

   \begin{Proposition}\label{prop:moments_martingale_inc} {Assume $\HUN$ and $\HDEUX$. Let $H\in(0,1)$.}
   There exists {$K=K(H,L,\alpha,\|\sigma\|)>0$ and} $\zeta>0$ such that for all $k\in\N^*$ and for all $p\geqslant2$,
   \begin{equation}
   \e[|M_k-M_{k-1}|^p|\mathcal{F}_{k-1}]\leqslant C^p\|F\|^p_{\rm Lip}\psi^p_{n,k}\zeta^{p/2}p\Gamma\left(\frac{p}{2}\right)\quad a.s.
   \end{equation}
   where $$\psi_{n,k}:=\left\lfloor\frac{k}{\Delta_n}\right\rfloor- \left\lceil\frac{k-1}{\Delta_n}\right\rceil+\sum_{i=\left\lceil\frac{k}{\Delta_n}\right\rceil}^{n}u_i^{H-3/2},$$
   {$u_i:=t_i-k+1=i\Delta_n-k+1$ and $M_k$ is defined by \eqref{def_martingale}.}
   \end{Proposition}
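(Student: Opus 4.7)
The plan is to combine the pathwise estimate~\eqref{eq:majo_M_Y} with the two regime-dependent bounds from Lemma~\ref{lem:control_Y_Ytilde}, and then to reduce the problem to controlling Gaussian suprema. First I would split the sum $\sum_{u_i\ge 0}|Y_{u_i}-\tilde Y_{u_i}|$ appearing in~\eqref{eq:majo_M_Y} according to whether $u_i\in[0,1]$ or $u_i>1$. The index set $\{i:u_i\in[0,1]\}$ is exactly $\{\lceil(k-1)/\Delta_n\rceil,\dots,\lfloor k/\Delta_n\rfloor\}$, whose cardinality is bounded by $\lfloor k/\Delta_n\rfloor-\lceil(k-1)/\Delta_n\rceil+1$; for those $i$, Lemma~\ref{lem:control_Y_Ytilde}(i) bounds each summand by $K\sup_{v\in[0,2]}|G_v^{(k)}(W-\tilde w)|$. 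For $i\ge\lceil k/\Delta_n\rceil$, Lemma~\ref{lem:control_Y_Ytilde}(ii) provides the weight $u_i^{H-3/2}$ multiplying $\sup_{s\in[0,1]}|W_s^{(k)}-\tilde w_s^{(k)}|$. Factoring the two suprema out, this gives
\[
|M_k-M_{k-1}|\;\le\;K\|F\|_{\rm Lip}\,\psi_{n,k}\int_{\Omega_{\tilde w}}Z^{(k)}(W,\tilde w)\,\p_W(d\tilde w),
\]
where $Z^{(k)}(W,\tilde w):=\sup_{v\in[0,2]}|G_v^{(k)}(W-\tilde w)|\;\vee\;\sup_{s\in[0,1]}|W_s^{(k)}-\tilde w_s^{(k)}|$.

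Next I would raise the inequality to the $p$-th power, use Jensen's inequality to push the exponent inside the $\p_W(d\tilde w)$ integral, and then take conditional expectation with respect to $\mathcal F_{k-1}$. Both increments $W^{(k)}:=(W_{\cdot+k-1}-W_{k-1})$ and $\tilde w^{(k)}$ are independent of $\mathcal F_{k-1}$, so the conditional expectation of $Z^{(k)}(W,\tilde w)^p$ equals its unconditional expectation, yielding
\[
\e[|M_k-M_{k-1}|^p\mid\mathcal F_{k-1}]\;\le\;K^p\|F\|_{\rm Lip}^p\psi_{n,k}^p\int_{\Omega_{\tilde w}}\e[Z^{(k)}(W,\tilde w)^p]\,\p_W(d\tilde w).
\]

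The final task is to bound $\e[Z^{(k)}(W,\tilde w)^p]$ by a Gaussian-type moment of the form $\zeta^{p/2}p\,\Gamma(p/2)$, uniformly in $k$ and $\tilde w$. For the Brownian supremum $\sup_{s\in[0,1]}|W_s^{(k)}-\tilde w_s^{(k)}|$, this is a direct consequence of Doob's maximal inequality together with the Gaussian tails of Brownian motion. For the Volterra process $v\mapsto G_v^{(k)}(W-\tilde w)$, one uses its Gaussianity (so that $p$-th moments are controlled by the variance) combined with Kolmogorov's continuity criterion to transfer the pointwise Gaussian control to a supremum over $[0,2]$; both the variance and the modulus of continuity are bounded by $H$-dependent constants only, so the estimate is uniform in $k$ and $\tilde w$. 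Integrating against $\p_W(d\tilde w)$ then absorbs the $\tilde w$-dependence and yields the announced bound, with constants tracked through the argument depending only on $H$, $L$, $\alpha$ and $\|\sigma\|$ (cf.\ Remark~\ref{rem:constant1}).

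The main obstacle I expect is the uniform-in-$k$ moment bound on $\sup_{v\in[0,2]}|G_v^{(k)}|$ in the regime $H<1/2$: the kernel $(v-s)^{H-1/2}$ is singular at $s=v$, so verifying Kolmogorov's criterion requires a careful variance estimate for the increments $G_v^{(k)}-G_{v'}^{(k)}$ to extract a Hölder exponent strictly larger than $0$. Once this regularity step is in place, the passage from the Gaussian variance bound to the subgaussian-type $p$-th moment $\zeta^{p/2}p\,\Gamma(p/2)$ is a direct application of the structure exploited in Lemma~\ref{lem:subgaussian}, and the remainder of the argument is bookkeeping of constants.
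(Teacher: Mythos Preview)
Your overall strategy matches the paper's proof: split the sum $\sum_{u_i\ge 0}$ according to $u_i\in[0,1]$ versus $u_i\ge 1$, apply the two parts of Lemma~\ref{lem:control_Y_Ytilde} to factor out $\psi_{n,k}$, use Jensen to push the $p$-th power inside the $\p_W(d\tilde w)$-average, and exploit the independence of $W^{(k)}$ from $\F_{k-1}$ to drop the conditioning.

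There is, however, a genuine gap in your final moment bound. You assert that $\e_W[Z^{(k)}(W,\tilde w)^p]$ can be bounded by $\zeta^{p/2}p\Gamma(p/2)$ \emph{uniformly in $\tilde w$}. This is false: for fixed $\tilde w$, the process $s\mapsto W_s^{(k)}-\tilde w_s^{(k)}$ is Brownian motion shifted by a deterministic path, so $\e_W[\sup_s|W_s^{(k)}-\tilde w_s^{(k)}|^p]$ grows like $\sup_s|\tilde w_s^{(k)}|^p$ and admits no uniform bound; likewise $G_v^{(k)}(W-\tilde w)$ is Gaussian with nonzero mean $-G_v^{(k)}(\tilde w)$, and its $p$-th moments are not controlled by the variance alone. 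Doob's inequality does not apply to $W_s^{(k)}-\tilde w_s^{(k)}$ with $\tilde w$ frozen (it is not a martingale), and Kolmogorov's criterion yields regularity, not moment bounds independent of the shift. Your subsequent sentence ``integrating against $\p_W(d\tilde w)$ then absorbs the $\tilde w$-dependence'' contradicts the uniformity claim just made.

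The paper fixes this by observing that the double integral $\int\e_W[\,\cdot\,]\,\p_W(d\tilde w)$ is exactly the expectation over two \emph{independent} Brownian copies $W^{(k)},\tilde W^{(k)}$; since $W^{(k)}-\tilde W^{(k)}\stackrel{d}{=}\sqrt{2}\,W^{(1)}$, the problem reduces to showing that $\sup_{v\in[0,2]}|G_v^{(1)}(W)|$ and $\sup_{s\in[0,1]}|W_s^{(1)}|$ are sub-Gaussian, which is standard. An equivalent repair of your argument is to split $|W_s^{(k)}-\tilde w_s^{(k)}|\le|W_s^{(k)}|+|\tilde w_s^{(k)}|$ (and similarly for $G_v^{(k)}$), bound the $W$-part as you intend, and then integrate the deterministic $\tilde w$-part against $\p_W$ to recover the same Gaussian moment---but that integration must be carried out explicitly rather than hidden behind a uniformity that does not hold.
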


   \begin{proof}
   Let $k\in\N^*$ and $p\geq2$. By combining inequality \eqref{eq:majo_M_Y} with the technical lemma \ref{lem:control_Y_Ytilde}, we immediately get that there exists {$K=K(H,L,\alpha,\|\sigma\|)>0$} such that:
   \begin{align*}
   &|M_k-M_{k-1}|^p\\
   &\leq {K}^p\|F\|^p_{\rm Lip}\psi^p_{n,k}\left(\int_\Omega\sup\limits_{v\in[0,2]}\left|G_v^{(k)}(W-\tilde{w})\right|+\sup\limits_{s\in[0,1]}|W_s^{(k)}-\tilde{w}^{(k)}_s|\p_W({\rm d}\tilde{w})\right)^p
   \end{align*}
   The end of the proof consists in proving that
   \begin{align*}
   &\e\left[\left.\left(\int_\Omega\sup\limits_{v\in[0,2]}\left|G_v^{(k)}(W-\tilde{w})\right|+\sup\limits_{s\in[0,1]}|W_s^{(k)}-\tilde{w}^{(k)}_s|\p_W({\rm d}\tilde{w})\right)^p~\right|~\F_{k-1}\right]\\
   &\leq \zeta^{p/2}p\Gamma\left(\frac{p}{2}\right).
   \end{align*}

   \noindent We have
   \begin{align}
   &\e\left[|M_k-M_{k-1}|^p|\F_{k-1}\right]\nonumber\\
   &\leq 2^{p-1}\e\left[\left.\left(\int_\Omega\sup\limits_{v\in[0,2]}\left|G_v^{(k)}(W-\tilde{w})\right|\p_W({\rm d}\tilde{w})\right)^p\right|~\F_{k-1}\right]\nonumber\\
   &\quad\quad\quad+2^{p-1}\e\left[\left.\left(\int_\Omega\sup\limits_{s\in[0,1]}|W_s^{(k)}-\tilde{w}^{(k)}_s|\p_W({\rm d}\tilde{w})\right)^p~\right|~\F_{k-1}\right]\nonumber\\
   &\leq 2^{p-1}\e\left[\left(\int_\Omega\sup\limits_{v\in[0,2]}\left|G_v^{(k)}(W-\tilde{w})\right|\p_W({\rm d}\tilde{w})\right)^p\right]\nonumber\\
   &\quad\quad\quad+2^{p-1}\e\left[\left(\int_\Omega\sup\limits_{s\in[0,1]}|W_s^{(k)}-\tilde{w}^{(k)}_s|\p_W({\rm d}\tilde{w})\right)^p\right]\label{eq:1proof_prop_martingale_inc}
   \end{align}
   where the last inequality is obtained by using that $W^{(k)}=(W_{s+k-1}-W_{k-1})_{s\geq0}$ is independent from $\F_{k-1}$.
   Now, if we denote by $\F^{(k)}$ the natural filtration associated to $W^{(k)}$, then the right hand side terms of \eqref{eq:1proof_prop_martingale_inc} are just expectations of conditional expectations with respect to $\F^{(k)}_1$, so we finally get
   \begin{align*}
   \e\left[|M_k-M_{k-1}|^p|\F_{k-1}\right]&\leq 2^{p-1}\e\left[\sup\limits_{v\in[0,2]}\left|G_v^{(k)}(W-\tilde{W})\right|^p\right]\nonumber\\
   &\quad\quad\quad+2^{p-1}\e\left[\sup\limits_{s\in[0,1]}|W_s^{(k)}-\tilde{W}^{(k)}_s|^p\right].
   \end{align*}
   Since $W^{(k)}$ and $\tilde{W}^{(k)}$ are i.i.d. and have the same law as $W^{(1)}$, we can replace $W^{(k)}-\tilde{W}^{(k)}$ by $\sqrt{2}W^{(1)}$, which gives
   \begin{align*}
   &\e\left[|M_k-M_{k-1}|^p|\F_{k-1}\right]\\
   &\leq 2^{p-1}\sqrt{2}\e\left[\sup\limits_{v\in[0,2]}\left|G_v^{(1)}(W)\right|^p\right]+2^{p-1}\sqrt{2}\e\left[\sup\limits_{s\in[0,1]}|W_s^{(1)}|^p\right].
   \end{align*}
   To conclude the proof, we only have to prove that $\sup_{v\in[0,2]}\left|G_v^{(1)}(W)\right|$ and $\sup_{s\in[0,1]}|W_s^{(1)}|$ are sub-Gaussian. The proof of this result follows the lines of \cite{varvenne2019concentration} Appendices A and B and we leave it to the patient reader.
   \end{proof}

   \noindent With Lemma \ref{lem:subgaussian} in hand, we finally get :

   \begin{Proposition}\label{prop:exp_moments_martingale_inc} {Assume $\HUN$ and $\HDEUX$. Let $H\in(0,1)$.}
   Let $k\in\N^*$. There exists {$K=K(H,L,\alpha,\|\sigma\|)>0$} (independent of $k$) such that for all $\lambda>0$,
   \begin{equation}
   \e[\exp(\lambda(M_k-M_{k-1}))~|~\mathcal{F}_{k-1}]\leqslant \exp\left({K}\lambda^2\|F\|^2_{\rm Lip}\psi^2_{n,k}\right)\quad a.s.
   \end{equation}
   where $$\psi_{n,k}:=\left\lfloor\frac{k}{\Delta_n}\right\rfloor- \left\lceil\frac{k-1}{\Delta_n}\right\rceil+\sum_{i=\left\lceil\frac{k}{\Delta_n}\right\rceil}^{n}u_i^{H-3/2},$$
   {$u_i:=t_i-k+1=i\Delta_n-k+1$ and $M_k$ is defined by \eqref{def_martingale}.}

   \end{Proposition}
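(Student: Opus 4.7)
The plan is to derive this as an essentially immediate corollary of Proposition \ref{prop:moments_martingale_inc} combined with the sub-Gaussian criterion in Lemma \ref{lem:subgaussian}, applied in the conditional version. The only real work is to check that the hypotheses of Lemma \ref{lem:subgaussian} are met, conditionally on $\mathcal{F}_{k-1}$, for the random variable $Z := M_k - M_{k-1}$.

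First I would verify the centering: since $(M_k)$ is by construction the Doob martingale $M_k = \e[F_X\mid\mathcal{F}_k]$, the tower property gives $\e[M_k-M_{k-1}\mid\mathcal{F}_{k-1}] = 0$ almost surely. So conditionally on $\mathcal{F}_{k-1}$, $Z$ is centered, which is exactly the hypothesis needed by Lemma \ref{lem:subgaussian}.

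Next I would rewrite the moment bound from Proposition \ref{prop:moments_martingale_inc} in the form required by the sub-Gaussian lemma. Setting
\begin{equation*}
   \tilde{\zeta} := C^2 \|F\|^2_{\rm Lip}\,\psi_{n,k}^2\,\zeta,
\end{equation*}
where $C$ and $\zeta$ are the constants provided by Proposition \ref{prop:moments_martingale_inc}, the bound there reads
\begin{equation*}
   \e\bigl[|Z|^p\,\big|\,\mathcal{F}_{k-1}\bigr]\ \le\ \tilde{\zeta}^{p/2}\,p\,\Gamma(p/2)\quad a.s.,\qquad p\ge 2.
\end{equation*}
This is exactly the sub-Gaussian moment condition of Lemma \ref{lem:subgaussian}, but with deterministic parameter $\tilde{\zeta}$ replaced by what happens to be deterministic here (since $\psi_{n,k}$ and $\|F\|_{\rm Lip}$ are deterministic). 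Applying Lemma \ref{lem:subgaussian} to a regular conditional version of the distribution of $Z$ given $\mathcal{F}_{k-1}$ yields, for every $\lambda>0$,
\begin{equation*}
   \e\bigl[\exp(\lambda Z)\,\big|\,\mathcal{F}_{k-1}\bigr]\ \le\ \exp\bigl(2\lambda^2\tilde{\zeta}\bigr)\ =\ \exp\bigl(2 C^2 \zeta\,\lambda^2 \|F\|^2_{\rm Lip}\,\psi_{n,k}^2\bigr)\quad a.s.,
\end{equation*}
which is the claimed inequality with $K := 2C^2\zeta$. The constant depends only on $H$, $L$, $\alpha$ and $\|\sigma\|$ because the same is already true of $C$ and $\zeta$ in the upstream proposition, and it is manifestly independent of $k$.

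The only subtle point—but not really a difficulty—is the conditional application of Lemma \ref{lem:subgaussian}, whose statement is written in the unconditional language. The standard way to justify this is to integrate the power series expansion $\exp(\lambda Z)=\sum_{p\ge 0}\lambda^p Z^p/p!$ term by term under $\e[\,\cdot\,\mid\mathcal{F}_{k-1}]$ (using Fubini/monotone convergence), plug in the conditional moment bound for $p\ge 2$, and use the centering $\e[Z\mid\mathcal{F}_{k-1}]=0$ to kill the $p=1$ term; the resulting series is exactly the one that the proof of Lemma \ref{lem:subgaussian} sums to the claimed Gaussian exponential, so the argument goes through verbatim with $\e$ replaced by $\e[\,\cdot\,\mid\mathcal{F}_{k-1}]$. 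I do not expect any genuine obstacle: once Proposition \ref{prop:moments_martingale_inc} is granted, the present statement is a formal packaging of its moment bound into an exponential bound.
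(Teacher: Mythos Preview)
Your proposal is correct and matches the paper's approach exactly: the paper presents this proposition without proof, merely writing ``With Lemma~\ref{lem:subgaussian} in hand, we finally get'' immediately after Proposition~\ref{prop:moments_martingale_inc}. Your justification of the conditional use of Lemma~\ref{lem:subgaussian} and the identification $K=2C^2\zeta$ just make explicit what the paper leaves implicit.
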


   \subsubsection{Proof of Proposition \ref{prop:exp_moments_sum_martingale_inc}}

   \begin{proof} The inequality on the conditional Laplace transform of $\sum_{k=1}^{\lceil t_n\rceil}M_k-M_{k-1}$ easily follows from Proposition \ref{prop:exp_moments_martingale_inc}. We thus have to prove the bound on $\sum_{k=1}^{\lceil t_n\rceil}\psi^2_{n,k}$.\\
   Let us begin by the estimation of $\psi_{n,k}$ for all $k\in\{1,\dots,\lceil t_n\rceil\}$.
   First, we easily get that
   \begin{equation}\label{eq:estim_psi_1}
   \left\lfloor\frac{k}{\Delta_n}\right\rfloor- \left\lceil\frac{k-1}{\Delta_n}\right\rceil\leq \Delta_n^{-1}.
   \end{equation}
   Secondly, let us consider the second part of $\psi_{n,k}$, we have
   \begin{align}\label{eq:estim_psi_2}
   &\sum_{i=\left\lceil\frac{k}{\Delta_n}\right\rceil}^{n}(i\Delta_n-k+1)^{H-3/2}\nonumber\\
   &\leq 1+\int_{\left\lceil k\Delta_n^{-1}\right\rceil}^n(t\Delta_n-k+1)^{H-3/2}{\rm d}t\nonumber\\
   &=1+\Delta_n^{H-3/2}\left[\frac{(t-(k-1)\Delta_n^{-1})^{H-1/2}}{H-1/2}\right]_{\left\lceil k\Delta_n^{-1}\right\rceil}^n\nonumber\\
   &=1+\frac{\Delta_n^{H-3/2}}{H-1/2}\left[(n-(k-1)\Delta_n^{-1})^{H-1/2}-\left(\left\lceil k\Delta_n^{-1}\right\rceil-(k-1)\Delta_n^{-1}\right)^{H-1/2}\right]\nonumber\\
   &\leq 1+\Delta_n^{H-3/2}\times\left\{\begin{array}{lll}
   \frac{1}{H-1/2}\left(n-(k-1)\Delta_n^{-1}\right)^{H-1/2} & \text{ if } & H>1/2\\
   \frac{1}{1/2-H}\left(\left\lceil k\Delta_n^{-1}\right\rceil-(k-1)\Delta_n^{-1}\right)^{H-1/2}& \text{ if } & H<1/2
   \end{array}\right.\nonumber\\
   &\leq 1+\Delta_n^{H-3/2}\times\left\{\begin{array}{lll}
   \frac{1}{H-1/2}\left(n-(k-1)\Delta_n^{-1}\right)^{H-1/2} & \text{ if } & H>1/2\\
   \frac{1}{1/2-H}\left(1+\Delta_n^{-1}\right)^{H-1/2}& \text{ if } & H<1/2.
   \end{array}\right.
   \end{align}
   From \eqref{eq:estim_psi_1} and \eqref{eq:estim_psi_2}, we thus deduce that there exists $c_1=c_1(H)>0$ such that
   \begin{equation}
   \psi_{n,k}\leq\Delta_n^{-1}+1+c_1\left\{\begin{array}{lll}
   \Delta_n^{-1}\left(t_n-(k-1)\right)^{H-1/2} & \text{ if } & H>1/2\\
   \Delta_n^{-1}& \text{ if } & H<1/2.
   \end{array}\right.
   \end{equation}
   We can now move on the estimation of $\sum_{k=1}^{\lceil t_n\rceil}\psi^2_{n,k}$.
   From the inequality above, it follows that there exists $c_2=c_2(H)>0$ such that
   \begin{align}\label{eq:estim_psi_3}
   \sum_{k=1}^{\lceil t_n\rceil}\psi^2_{n,k}
   \leq c_2(&\Delta_n^{-2}+1)\lceil t_n\rceil\nonumber\\
   &+c_2\left\{\begin{array}{lll}
   \Delta_n^{-2}\sum_{k=1}^{\lceil t_n\rceil}\left(t_n-(k-1)\right)^{2H-1} & \text{ if } & H>1/2\\
   \Delta_n^{-2}\lceil t_n\rceil& \text{ if } & H<1/2.
   \end{array}\right.
   \end{align}
   It remains to estimate $\sum_{k=1}^{\lceil t_n\rceil}\left(t_n-(k-1)\right)^{2H-1}$ when $H>1/2$. It is readily checked that
   \begin{equation}\label{eq:estim_psi_4}
   \sum_{k=1}^{\lceil t_n\rceil}\left(t_n-(k-1)\right)^{2H-1}\leq\sum_{k=1}^{\lceil t_n\rceil}\left(\lceil t_n\rceil-(k-1)\right)^{2H-1}=\sum_{k=1}^{\lceil t_n\rceil}k^{2H-1}\leq\lceil t_n\rceil^{2H}.
   \end{equation}
   Finally, from \eqref{eq:estim_psi_3} and \eqref{eq:estim_psi_4}, we get the existence of $c_3=c_3(H)>0$ and $c_4=c_4(H)>0$ such that
   \begin{align*}
   \sum_{k=1}^{\lceil t_n\rceil}\psi^2_{n,k}&\leq c_3\left\{\begin{array}{lll}
   \Delta_n^{-2}\lceil t_n\rceil+\Delta_n^{-2}\lceil t_n\rceil^{2H} & \text{ if } & H>1/2\\
   \Delta_n^{-2}\lceil t_n\rceil& \text{ if } & H<1/2
   \end{array}\right.\\
   &\leq c_4\left\{\begin{array}{lll}
   \Delta_n^{-2}\lceil t_n\rceil^{2H} & \text{ if } & H>1/2\\
   \Delta_n^{-2}\lceil t_n\rceil & \text{ if } & H<1/2.
   \end{array}\right.
   \end{align*}
   This concludes the proof since $t_n=n\Delta_n$.
   \end{proof}

   \subsection{Part 2: $\e[F_X~|~\mathcal{F}_0]-\e[F_X]$}
   We now turn to the bound of $\e[F_X~|~\mathcal{F}_0]-\e[F_X]$.
   First, let us remark that
   $$\e[\e[F_X~|~\mathcal{F}_0]-\e[F_X]]=0$$
   so that we can use again Lemma \ref{lem:subgaussian} in order to deduce exponential bounds. The related result is stated in Proposition \ref{prop:controlfo}.\\

   \noindent We introduce notations related to the conditioning with respect to $\mathcal{F}_0$. Let $(W_t)_{t\in\ER}$ denote the two-sided Brownian Motion induced by   Mandelbrot-Van Ness representation (see \eqref{eq:mandelbrot}) and set $W^{-}=(W_t)_{t\le 0}$.   For  $\varepsilon\in(0,1/2)$ and $\varepsilon'>0$, set
   \begin{multline}\label{eq:def-omega-minus}
   \Omega_-:=\Big\{  w:(-\infty,0]\to \R^d; \, w(0)=0\, ,\,  \lim_{t\rightarrow-\infty} \frac{w(t)}{|t|^{\frac{1}{2}+\varepsilon'}}=0 \ \text{and}\\
   w \text{ is $(\frac{1}{2}-\varepsilon)$-H\"older continuous on compact intervals}  \Big\}.
   \end{multline}
   Owing to some classical properties on the Wiener process, this subspace is of Wiener measure $1$ for any fixed
   $\varepsilon\in(0,1/2)$ and $\varepsilon'>0$. In other words, $\mathproba{P}_{W^{-}}(\Omega_-)=1$. Then, for any $w\in \Omega_-$,
   $${\cal L}((B_t^H)_{t\ge0}|W^{-}=w)={\cal L}((Z_t+ D_t(w))_{t\ge 0})$$
   where $Z_0=D_0(w)=0$ and for all $t>0$,
   $$ Z_t=c_H\int_0^t (t-s)^{H-\frac{1}{2}} -(-s)^{H-\frac{1}{2}}  dW_s$$
   and
   $$D_t(w)=\alpha_H\int_{-\infty}^0 (t-s)^{H-\frac{1}{2}} -(-s)^{H-\frac{1}{2}}  dw_s.$$
    $(Z_t)_{t\ge0}$ and $(D_t(w))_{t\ge0}$ are continuous processes on $[0,+\infty)$ (see Lemma \ref{propDmoins} below) and for any $w\in \Omega_-$, the (additive) SDE
    \begin{equation}\label{eq:conditionededs}
     dY_t=b(Y_t) dt+\sigma (dZ_t+dD_t^{w})
     \end{equation}
    has a unique solution denoted by $(X_t^{x,w})_{t\ge0}$. \\

   \noindent  Since $F$ is Lipschitz continuous with respect to $d_n$, we can also remark that
   \begin{equation}\label{eq:controlfxfo}
   |\e[F_X~|~\mathcal{F}_0]-\e[F_X]|\le \|F\|_{\rm Lip} \sum_{k=1}^n \int_{\R^d\times\Omega_-} |X_{t_k}^{X_0,W^{-}}-X_{t_k}^{y,w}|\nu(dy,dw),
   \end{equation}
   where $\nu={\cal L}(X_0,W^-)$ is an initial condition for the dynamical system which is such that the process is stationary.
   This involves that we will use bounds on $|X_{t}^{x,W^{-}}-X_{t}^{y,w}|$ to deduce the result for $\e[F_X~|~\mathcal{F}_0]-\e[F_X]$.  To this end,  we first state a technical result about  $D(w)$:
    \begin{Lemma}\label{propDmoins} Let $w\in \Omega^{-}$ with $\varepsilon\in(0,H)$ and $\varepsilon'\in(0,1-H)$. Then, $(D_{t}(w))_{t\ge0}$ is continuous on $[0,\infty)$ and
   differentiable on $(0,+\infty)$. Furthermore, for any $\delta_1\in(0,1-H-\varepsilon']$ and $\delta_2\in \le H-1-\varepsilon$, there exist some positive constants $C_{\delta_1}$ and $C_{\delta_2}$ such that for any $t>0$,
   \begin{equation}\label{dprimetphieps}
   \begin{split}
   & |D'_t(w)|\le  \Phi_{\varepsilon,\varepsilon'}(w) \left(C_{\delta_1}t^{-\delta_1} 1_{t>1}+C_{\delta_2} t^{-\delta_2} 1_{t\in(0,1]}\right)\\
     &\textnormal{with}\; \Phi_{\varepsilon,\varepsilon'}(w)=
     \sup_{r\in(0,1]}\frac{|w(-r)|}{r^{\frac{1}{2}-\varepsilon}}+\sup_{r>1}\frac{|w(-r)|}{r^{\frac{1}{2}+\varepsilon'}}.
    \end{split}
    \end{equation}
   \end{Lemma}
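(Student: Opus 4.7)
The plan is to first define $D_t(w)$ rigorously via a deterministic integration by parts that moves the $dw_s$ differential onto the kernel, then differentiate under the integral, and finally estimate separately in the two regimes $t\in(0,1]$ and $t>1$ using the two suprema comprising $\Phi_{\varepsilon,\varepsilon'}(w)$.

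Since $w\in\Omega_-$ is $(1/2-\varepsilon)$--H\"older continuous on compact sets, for every $A>0$ the integral $\int_{-A}^0 \bigl[(t-s)^{H-1/2}-(-s)^{H-1/2}\bigr]dw_s$ is well defined in the Young sense and coincides with its classical integration-by-parts form. Using $w_0=0$ and $|w_s|\le \Phi_{\varepsilon,\varepsilon'}(w)|s|^{1/2+\varepsilon'}$ for $|s|>1$, the boundary contribution at $s=-A$ tends to $0$ as $A\to\infty$: indeed $(t+A)^{H-1/2}-A^{H-1/2}\sim (H-1/2)\,t\,A^{H-3/2}$ as $A\to\infty$, so the boundary product is $O(A^{H-1+\varepsilon'})$, which vanishes since $\varepsilon'<1-H$. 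Letting $A\to\infty$ yields
\[ D_t(w) = \alpha_H(H-\tfrac12)\int_{-\infty}^0 \bigl[(t-s)^{H-3/2}-(-s)^{H-3/2}\bigr]w_s\, ds, \]
and continuity of $t\mapsto D_t(w)$ on $[0,\infty)$ follows from dominated convergence using the split bounds on $|w_s|$. I would then differentiate under the integral on $(0,\infty)$, justified on any compact subinterval of $(0,\infty)$ by the same split bounds, to obtain
\[ D_t'(w) = \alpha_H(H-\tfrac12)(H-\tfrac32)\int_{-\infty}^0 (t-s)^{H-5/2}w_s\, ds. \]

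The core estimate is then a direct Lebesgue calculation on this single integral. I split the domain at $s=-1$ and apply $|w_s|\le \Phi_{\varepsilon,\varepsilon'}(w)|s|^{1/2-\varepsilon}$ on $[-1,0]$ and $|w_s|\le \Phi_{\varepsilon,\varepsilon'}(w)|s|^{1/2+\varepsilon'}$ on $(-\infty,-1)$. After the substitution $u=-s$ the task reduces to controlling
\[ I_1(t)=\int_0^1 (t+u)^{H-5/2}u^{1/2-\varepsilon}du \qquad\text{and}\qquad I_2(t)=\int_1^\infty (t+u)^{H-5/2}u^{1/2+\varepsilon'}du. \]
For $t\in(0,1]$ the rescaling $u=tv$ in $I_1$ gives $I_1(t)=t^{H-1-\varepsilon}\int_0^{1/t}(1+v)^{H-5/2}v^{1/2-\varepsilon}dv$ with a uniformly bounded $v$-integral, since $(H-5/2)+(1/2-\varepsilon)=H-2-\varepsilon<-1$; while $I_2(t)\le\int_1^\infty u^{H-2+\varepsilon'}du<\infty$ because $\varepsilon'<1-H$. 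For $t>1$, one has $I_1(t)\lesssim t^{H-5/2}$ trivially, and the same scaling in $I_2$ produces $I_2(t)=t^{H-1+\varepsilon'}\int_{1/t}^\infty(1+v)^{H-5/2}v^{1/2+\varepsilon'}dv$ with a uniformly bounded $v$-integral. The dominant term in each regime gives $|D_t'(w)|\lesssim \Phi_{\varepsilon,\varepsilon'}(w)\,t^{H-1-\varepsilon}$ for $t\le 1$ and $|D_t'(w)|\lesssim \Phi_{\varepsilon,\varepsilon'}(w)\,t^{H-1+\varepsilon'}$ for $t>1$, reproducing the claimed inequalities with the extremal exponents $\delta_1=1-H-\varepsilon'$ and $\delta_2=1-H+\varepsilon$; weaker admissible choices follow from monotonicity of $t\mapsto t^{-\delta}$ in $\delta$ on each regime.

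The main delicate point is not computational but structural: rigorously justifying the integration by parts at $s=-\infty$ together with the subsequent differentiation under the integral sign. Both rest on the joint constraints $\varepsilon<H$ and $\varepsilon'<1-H$ built into $\Omega_-$, which are precisely what make the improper boundary terms vanish and the resulting Lebesgue integrals converge; without them the representation \emph{and} its $t$-regularity would both break down.
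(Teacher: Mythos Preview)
Your proof is correct and follows essentially the same route as the paper's: integration by parts to reach the Lebesgue representation of $D_t$, differentiation under the integral, and a split of the domain at $s=-1$ exploiting the two suprema in $\Phi_{\varepsilon,\varepsilon'}$. The only cosmetic differences are that the paper bounds the split integrals via the crude inequality $(-r)^{\alpha}\le(t-r)^{\alpha}$ rather than your rescaling $u=tv$, and deduces continuity at $t=0$ from the integrability of $t\mapsto D_t'(w)$ near $0$ (which you also obtain) rather than by dominated convergence on $D_t$ itself.
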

   \begin{proof}
   Let $w\in \Omega^{-}$. By an integration by parts, one checks that the process $(D_{t}(w))$
   is well-defined for any $t>0$ and admits the following alternative representation:
   \begin{equation}\label{ipp-d-x-minus}
   { D}_t({w})=\alpha_H\left(H-\frac 12\right)\int_{-\infty}^0 \left((t-r)^{H-\frac 32}-(-r)^{H-\frac 32}\right) w(r)\, dr\quad \textnormal{if $t\in (0,1]$}
   \end{equation}
   It easily follows that $(D_t(w))_{t>0}$ is smooth on $(0,+\infty]$ and that for all $t>0$,
   \begin{equation}
   { D}'_t({w})=\alpha_H\left(H-\frac 12\right)\left(H-\frac32\right)\int_{-\infty}^0 (t-r)^{H-\frac 52}w(r)\, dr.
   \end{equation}
   On the one hand, for any $\bar{\varepsilon}\in [\varepsilon',1-H)$,
   \begin{align*}
   |\int_{-\infty}^{-1} (t-r)^{H-\frac 52}w(r)&\, dr|\le \sup_{r>1}\frac{|w(-r)|}{r^{\frac{1}{2}+\bar{\varepsilon}}}
   \int_{-\infty}^{-1} (t-r)^{H-\frac 52} r^{\frac{1}{2}+\bar{\varepsilon}} dr\\
   &\le \Phi_{\varepsilon,\varepsilon'}(w)\int
   _{-\infty}^{-1} (t-r)^{H-2+\bar{\varepsilon}} dr\le \Phi_{\varepsilon,\varepsilon'}(w)(1+t)^{H-2+\bar{\varepsilon}}.
   \end{align*}
   On the other hand, for any $\tilde{\varepsilon}\ge\varepsilon$,
   $$\int_{-1}^0 (t-r)^{H-\frac 52}w(r)\, dr\le \sup_{r\in(0,1]}\frac{|w(-r)|}{r^{\frac{1}{2}-\tilde{\varepsilon}}}
   \int_{-1}^{0} (t-r)^{H-2-\tilde{\varepsilon}} dr\le \Phi_{\varepsilon,\varepsilon'}(w) t^{H-1-\tilde{\varepsilon}}.$$

   Inequality \eqref{dprimetphieps} easily follows from what precedes.
   In particular,  since $\varepsilon\in(0,H)$, $t\mapsto D'_t(w)$ is integrable near $0$ and hence, $t\mapsto D_t(w)$ is continuous on $\ER_+$.
   \end{proof}


   \noindent In view of  \eqref{eq:controlfxfo}, we now provide a control of the evolution of two paths of the fractional SDE \eqref{eq:conditionededs} starting from initial conditions $({x,\tilde{w}})$ and $({y,{w}})$.
   \begin{Lemma}\label{lem:conttrajxw} Suppose that assumptions $\HUN$ and $\HDEUX$ are in force. Let $w$ and $\tilde{w}$ belong to $\Omega^{-}$ with $\varepsilon \in(0,H)$ and $\varepsilon'\in(0,1-H)$.  Then, there exist some positive constants {$C_1=C_1(H,L,\alpha,\|\sigma\|)$ and $C_2=C_2(H,L,\alpha,\|\sigma\|)$} such that for every $t\ge0$,
    $$|X_{t}^{x,\widetilde{w}}-X_{t}^{y,w}|^2\le  C_1e^{-{\alpha} t}  |x-y|^2+ C_2\Phi_{\varepsilon,\varepsilon'}(\tilde{w}-w)(t\vee 1)^{2H-2+\varepsilon'}$$
    where $\Phi_{\varepsilon,\varepsilon'}$ is defined by \eqref{dprimetphieps}.
   \end{Lemma}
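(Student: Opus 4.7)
The plan is to subtract the two fractional SDEs associated with the conditioned noises $\tilde{w}$ and $w$. Both processes share the same future Brownian piece $Z_t$, so that contribution cancels; using the linearity $D_t(\tilde{w})-D_t(w)=D_t(\tilde{w}-w)$ and the smoothness of $t\mapsto D_t$ on $(0,\infty)$ from Lemma~\ref{propDmoins}, the difference $U_t := X_t^{x,\tilde{w}} - X_t^{y,w}$ satisfies the ordinary differential equation
\begin{equation*}
U'_t = \bigl[b(X_t^{x,\tilde{w}}) - b(X_t^{y,w})\bigr] + \sigma D'_t(\tilde{w} - w), \qquad U_0 = x-y.
\end{equation*}

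Taking the inner product with $U_t$, using the strong monotonicity in $\HUN$ together with Cauchy--Schwarz gives
\begin{equation*}
\tfrac{1}{2}\tfrac{d}{dt}|U_t|^2 \le -\alpha|U_t|^2 + \|\sigma\|\,|U_t|\,|D'_t(\tilde{w}-w)|.
\end{equation*}
Dividing by $|U_t|$ off the null set $\{U_t=0\}$ (justified by a standard smoothing) yields the \emph{linear} differential inequality $\tfrac{d}{dt}|U_t| \le -\alpha|U_t| + \|\sigma\|\,|D'_t(\tilde{w}-w)|$, whence Gronwall gives
\begin{equation*}
|U_t| \le e^{-\alpha t}|x-y| + \|\sigma\|\int_0^t e^{-\alpha(t-s)}|D'_s(\tilde{w}-w)|\,ds.
\end{equation*}
I would prefer this $L^1$-Gronwall route to the more natural squared version (with Young's inequality on $|U_t|^2$), because $|D'_s|^2$ fails to be integrable near $0$ whenever $H\le 1/2$, whereas $|D'_s|$ itself is integrable for every $H\in(0,1)$.

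Finally, Lemma~\ref{propDmoins} applied to $\tilde{w}-w\in\Omega_-$ with, say, $\delta_2 := 1-H+\varepsilon$ for small times and $\delta_1 := 2-H-\varepsilon'$ for large times, gives the pointwise bound $|D'_s(\tilde{w}-w)| \le \Phi_{\varepsilon,\varepsilon'}(\tilde{w}-w)\bigl(C_{\delta_2}s^{-\delta_2}\mathbf{1}_{s\le 1} + C_{\delta_1}s^{-\delta_1}\mathbf{1}_{s>1}\bigr)$. I then split the convolution integral at $s=1$ and $s=t/2$: the contributions on $[0,1]$ and $[1,t/2]$ are dominated by $e^{-\alpha(t-1)}$ and $e^{-\alpha t/2}$ respectively (using $\delta_2<1$ and $\delta_1>1$ for integrability), while the dominant piece $\int_{t/2}^t e^{-\alpha(t-s)}s^{-\delta_1}\,ds$ is controlled by $(t/2)^{-\delta_1}/\alpha$, giving a term of order $t^{H-2+\varepsilon'}$ on $(t\vee 1)$. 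Squaring via $(a+b)^2\le 2a^2+2b^2$ and collecting constants then yields the claimed inequality (with $\Phi_{\varepsilon,\varepsilon'}(\tilde{w}-w)$ naturally appearing squared in the second term).

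The main obstacle is the Laplace-type asymptotic analysis showing $\int_1^t e^{-\alpha(t-s)}s^{-\delta_1}\,ds \lesssim t^{-\delta_1}$ as $t\to\infty$, which is what produces the polynomial-decay exponent of the lemma; a secondary technical point is the need to run Gronwall on $|U_t|$ rather than $|U_t|^2$ so that the argument covers uniformly both regimes $H\le 1/2$ and $H>1/2$.
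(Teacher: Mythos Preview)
Your approach is correct and takes a genuinely different route from the paper. The paper does a \emph{two-stage} argument: on a short interval $[0,t_0]$ (with $t_0\in(0,1]$) it runs an ordinary Gronwall using only the Lipschitz bound $\HDEUX$ and the uniform bound $\sup_{s\le t_0}|D_s(\tilde w-w)|$, obtaining $|U_{t_0}|\le (|x-y|+C\Phi)e^{Lt_0}$; then on $[t_0,t]$ it differentiates $e^{\alpha t}|U_t|^2$, uses $\HUN$ with Young's inequality $2\langle U_s,\sigma D'_s\rangle\le \alpha|U_s|^2+\alpha^{-1}\|\sigma\|^2|D'_s|^2$, and applies the same Laplace-type estimate on $\int_{t_0}^t e^{\alpha s}|D'_s|^2\,ds$. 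The cutoff at $t_0>0$ is precisely how the paper sidesteps the non-integrability of $|D'_s|^2$ near $0$ when $H\le 1/2$.

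Your single-pass linear Gronwall on $|U_t|$ is cleaner: because $|D'_s|$ (rather than $|D'_s|^2$) is integrable near $0$ for all $H\in(0,1)$ once $\varepsilon<H$, you can start at $t=0$ and avoid the separate short-time argument entirely. Your splitting of $\int_0^t e^{-\alpha(t-s)}|D'_s|\,ds$ at $s=1$ and $s=t/2$ is correct and produces a dominant piece of order $t^{H-2+\varepsilon'}$; after squaring this gives $(t\vee 1)^{2H-4+2\varepsilon'}$, which is in fact \emph{stronger} than the lemma's stated exponent $2H-2+\varepsilon'$ (and than the paper's own $2H-2+2\varepsilon'$ obtained from the squared route). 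Your remark that $\Phi_{\varepsilon,\varepsilon'}$ naturally enters squared is also right; the paper's stated bound with a single power of $\Phi$ appears to be a typo carried over from its proof, where $|D'_s|^2$ should produce $\Phi^2$. None of this affects the downstream use in Proposition~\ref{prop:controlfo}, where only polynomial moments of $\Phi$ are needed.
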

   \begin{proof} For two paths ${w}$ and $\widetilde{w}$ in $\Omega^{-}$,
   $$ B_t^{\widetilde{w}}-B_t^{w}= D_t(\widetilde{w}-w).$$
   Thus, for any $t_0>0$,
   $$ X_t^{x,\widetilde{w}}-X_t^{y,w}=
   X_{t_0}^{x,\widetilde{w}}-X_{t_0}^{y,w}+\int_{t_0}^t b(X_s^{x,\widetilde{w}})- b(X_s^{x,{w}^-}) ds+\sigma D_t(\widetilde{w}-w).$$
   As a consequence,
   \begin{align*}
   & e^{\alpha t} |X_t^{x,\widetilde{w}}-X_t^{y,w}|^2=e^{\alpha t_0} |X_{t_0}^{x,\widetilde{w}}-X_{t_0}^{y,w}|^2\\
    &+\int_0^t 2e^{\alpha s}\langle X_s^{x,\widetilde{w}}-X_s^{x,{w}}), \sigma D'_s (\widetilde{w}-w)\rangle ds\\
    &+\int_0^t e^{\alpha s}\left(\alpha |X_s^{x,\widetilde{w}}-X_s^{y,w}|^2+2\langle X_s^{x,\widetilde{w}}-X_s^{x,{w}}, b(X_s^{x,\widetilde{w}})- b(X_s^{x,{w}})\rangle\right) ds.
   \end{align*}
   By Assumption $\HUN$,
   $$\langle X_s^{x,\widetilde{w}}-X_s^{x,{w}}, b(X_s^{x,\widetilde{w}})- b(X_s^{x,{w}})\rangle\le -\alpha |X_s^{x,\widetilde{w}}-X_s^{y,w}|^2$$
   whereas by the elementary inequality $|uv|\le (\varepsilon/2)|u|^2+1/(2\varepsilon)|v|^2$ applied with $\varepsilon=\alpha$,
   $$2\langle X_s^{x,\widetilde{w}}-X_s^{x,{w}}), \sigma D'_s (\widetilde{w}-w)\rangle\le \alpha |X_s^{x,\widetilde{w}}-X_s^{y,w}|^2+
   \frac{\|\sigma\|^2}{\alpha}|D'_s (\widetilde{w}-w)|^2.$$
   Thus, for any $\varepsilon'>0$, we have for any $t\ge t_0$,
   $$e^{\alpha t} |X_t^{x,\widetilde{w}}-X_t^{y,w}|^2\le e^{\alpha t_0}|X_{t_0}^{x,\widetilde{w}}-X_{t_0}^{y,w}|^2+
   \frac{\|\sigma\|^2}{\alpha}\int_{t_0}^t e^{\alpha s}|D'_s (\widetilde{w}-w)|^2ds.$$
   By Lemma \ref{propDmoins}, we deduce that a positive constant $C$  exists such that:
   $$\int_{t_0}^t e^{\alpha s}|D'_s (\widetilde{w}-w)|^2ds\le C\Phi_{\varepsilon,\varepsilon'}(\tilde{w}-w)\left(1+ \int_1^{t\vee 1} e^{\alpha s}s^{2H-2+2\varepsilon'} ds\right).$$
   By an integration by parts, it follows that
   \begin{align*}
   \frac{e^{-\alpha t}}{\alpha} \int_{t_0}^t e^{\alpha s}|D'_s (\widetilde{w}-w)|^2ds&\le
   \frac{C\Phi_{\varepsilon,\varepsilon'}(\tilde{w}-w)}{\alpha}\left(e^{-\alpha t} +\int_1^{t\vee 1} e^{\alpha (s-t)}s^{2H-2+2\varepsilon'} ds\right)\\
   &\le C\Phi_{\varepsilon,\varepsilon'}(\tilde{w}-w))
   (t\vee 1)^{2H-2+2\varepsilon'}.
   \end{align*}
   Thus,
   $$|X_t^{x,\widetilde{w}}-X_t^{y,w}|^2\le e^{-\alpha (t-t_0)}|X_{t_0}^{x,\widetilde{w}}-X_{t_0}^{y,w}|^2+C\|\sigma\|^2 {\Phi_{\varepsilon,\varepsilon'}(\tilde{w}-w)}(t\vee 1)^{2H-2+2\varepsilon'}.$$
   Let us finally control $|X_{t_0}^{x,\widetilde{w}}-X_{t_0}^{y,w}|^2$. Since $b$ is $L$-Lipschitz continuous, for every $t\ge 0$,
   $$|X_{t}^{x,\widetilde{w}}-X_{t}^{y,w}|\le |x-y|+L \int_0^t |X_{s}^{x,\widetilde{w}}-X_{s}^{y,w}| ds+\|\sigma\|\sup_{s\in(0,t]} |D_s(\widetilde{w}-w)|,$$
   and the Gronwall Lemma yields:
   $$|X_{t_0}^{x,\widetilde{w}}-X_{t_0}^{y,w}|\le \left(|x-y|+\|\sigma\|\sup_{s\in(0,t_0]} |D_s(\widetilde{w}-w)|\right)e^{L t_0}.$$
   Now, assume that $t_0\in(0,1]$. By Lemma \ref{propDmoins},
   $$\sup_{s\in(0,t_0]} |D_s(\widetilde{w}-w)|\le \int_0^{t_0}|D'_s(\widetilde{w}-w)| ds\le C \Phi_{\varepsilon,\varepsilon'}(\tilde{w}-w) t_0^{H-\varepsilon}.$$
   As a consequence,
   $$|X_{t_0}^{x,\widetilde{w}}-X_{t_0}^{y,w}|^2\le C\left(|x-y|^2 +\Phi_{\varepsilon,\varepsilon'}(\tilde{w}-w)\right).$$
   The result follows.
   \end{proof}
   Before stating Proposition \ref{prop:controlfo} (which provides the exponential bound for $\e[F_X~|~\mathcal{F}_0]-\e[F_X]$), we finally obtain bounds on  the moments of $\Phi_{\varepsilon,\varepsilon'}(W^-)$ and of the invariant distribution.

   \begin{Lemma} \label{lem:contbrownnorm} Let $\varepsilon, \varepsilon'$ be some positive numbers. Then, there exists {$C_{\varepsilon'}>0$} such that for every $\lambda>0$,
   $$\e[|\Phi_{\varepsilon,\varepsilon'}(W^-)|^p]\le C_{\varepsilon'}^p \e[|{\cal Z}|^p]$$
   where ${\cal Z}$ has ${\cal N}(0,1)$-distribution.
   \end{Lemma}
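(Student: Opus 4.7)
The plan is to recognize $\Phi_{\varepsilon,\varepsilon'}(W^-)$ as a sum of two suprema of centered Gaussian processes with uniformly bounded variance, and to control these via Brownian self-similarity and Gaussian concentration. By time reversal, $(W_{-r})_{r\ge 0}$ has the law of a standard Brownian motion $B$, so $\Phi_{\varepsilon,\varepsilon'}(W^-)$ has the same law as $A_1+A_2$ with
\[
A_1 := \sup_{r \in (0,1]} \frac{|B_r|}{r^{1/2-\varepsilon}}, \qquad A_2 := \sup_{r > 1} \frac{|B_r|}{r^{1/2+\varepsilon'}}.
\]
For $A_2$, applying the Brownian time inversion $\widetilde{B}_u := u B_{1/u}$ (which is again a standard Brownian motion, extended continuously by $\widetilde{B}_0=0$) and setting $u=1/r$ yields $A_2 \stackrel{d}{=} \sup_{u \in (0,1]} |\widetilde{B}_u|/u^{1/2-\varepsilon'}$. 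Both $A_1$ and $A_2$ are therefore instances of $Z_\delta := \sup_{r \in (0,1]} |B_r|/r^{1/2-\delta}$, with $\delta=\varepsilon$ and $\delta=\varepsilon'$ respectively, so it suffices to prove that $\e[Z_\delta^p] \le K_\delta^p\, \e[|\mathcal{Z}|^p]$.

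To bound $Z_\delta$, I dyadically decompose $(0,1]=\bigcup_{k\ge 0} I_k$ with $I_k := (2^{-k-1}, 2^{-k}]$. Brownian self-similarity gives
\[
\sup_{r \in I_k} \frac{|B_r|}{r^{1/2-\delta}} \stackrel{d}{=} 2^{-k\delta}\, M, \qquad M := \sup_{s \in [1/2,1]} \frac{|B_s|}{s^{1/2-\delta}}.
\]
The random variable $M$ is the supremum of a centered continuous Gaussian process on $[1/2,1]$ with pointwise variance $s^{2\delta}\le 1$. A standard chaining argument yields $\e[M]\le C_\delta<\infty$, and the Borell-TIS inequality provides $\p(M \ge \e[M] + x) \le \exp(-x^2/2)$ for every $x\ge 0$, hence $\p(M > x)\le \exp(-c_\delta x^2)$ for $x$ large enough. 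A union bound over $k$ gives
\[
\p(Z_\delta > x) \le \sum_{k \ge 0} \p(M > x\, 2^{k\delta}),
\]
and the fast geometric decay of $\exp(-c_\delta x^2 2^{2k\delta})$ in $k$ keeps this sum of order $\exp(-c_\delta x^2)$ for $x$ large, so $Z_\delta$ has a genuine sub-Gaussian tail with constants depending only on $\delta$.

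Finally, the tail bound together with $\e[Z_\delta^p] = p\int_0^\infty x^{p-1}\p(Z_\delta > x)\, dx$ and the change of variable $y=c_\delta x^2$ yields $\e[Z_\delta^p]\le K_\delta^p\, \Gamma(p/2+1)$. Since $\e[|\mathcal{Z}|^p]$ is of order $(p/e)^{p/2}$ by Stirling, we obtain $\e[Z_\delta^p]\le \widetilde{K}_\delta^p\, \e[|\mathcal{Z}|^p]$. Applying this for $\delta=\varepsilon$ and $\delta=\varepsilon'$, and combining with $(A_1+A_2)^p \le 2^{p-1}(A_1^p+A_2^p)$, produces the stated inequality. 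The only delicate point is the uniform sub-Gaussian bound for $M$ used in the dyadic step; once Borell-TIS supplies it, the remaining ingredients (time reversal, time inversion, union bound, moment-to-tail conversion) are entirely routine.
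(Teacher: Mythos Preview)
Your argument is correct, but it follows a genuinely different route from the paper's. Both proofs begin with the same reduction: by time reversal one works with a forward Brownian motion, and Brownian time inversion $u\mapsto uB_{1/u}$ converts the two suprema into the same type of quantity. From there the paths diverge. The paper represents $W_t/t^{1/2+\varepsilon'}$ on $[1,\infty)$ via It\^o's formula as $W_1+\int_1^t s^{-1/2-\varepsilon'}\,dW_s-(1/2+\varepsilon')\int_1^t W_s\,s^{-3/2-\varepsilon'}\,ds$; the stochastic integral is a time-changed Brownian motion on a \emph{finite} interval (total variance $\sigma_\infty=\int_1^\infty s^{-1-2\varepsilon'}ds$), so its running supremum is controlled by the reflection principle, and the Lebesgue integral is handled by Jensen plus Brownian scaling. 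Your route instead uses a dyadic decomposition of $(0,1]$, self-similarity on each block, and Borell--TIS to get a sub-Gaussian tail for $Z_\delta$ via a union bound. The paper's approach is more elementary (no Borell--TIS, only the reflection principle and scaling) and keeps the constants fairly explicit; yours is more in the spirit of general Gaussian process theory and would adapt more readily to non-Brownian Gaussian drivers. Two minor remarks: you should say a word about reducing to $d=1$ (the paper does; $|\cdot|$ is the Euclidean norm on $\R^d$), and invoking chaining for $\e[M]<\infty$ is unnecessary since $M\le c_\delta\sup_{s\in[0,1]}|B_s|$ already gives it.
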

   \begin{proof} It is enough to consider the one-dimensional case and by  a symmetry argument, it is certainly equivalent to prove the result for a Brownian motion on $\ER_+$. Furthermore, using that for a Brownian motion $W$ on $\ER_+$,
   $t\mapsto t W_{\frac{1}{t}}$ (with initial value equal to $0$) is also a Brownian motion, we deduce that
   we have only to prove that for any $\varepsilon'>0$,
   $$\e\left[\sup_{t\ge1}\left|\frac{W_t}{t^{\frac{1}{2}+\varepsilon'}}\right|^p\right]\le C_{\varepsilon'}^p \e[|{\cal Z}|^p].$$
   By the It\^o formula,
   \begin{equation}\label{eq:decompwsur}
   \frac{W_t}{t^{\frac{1}{2}+\varepsilon'}}=W_1+\int_1^{t} \frac{1}{s^{\frac{1}{2}+\varepsilon'}}dW_s-(\frac{1}{2}+\varepsilon')\int_1^{t} \frac{W_s}{s^{\frac{3}{2}+\varepsilon'}} ds.
   \end{equation}
   For the first right-hand side term, there is nothing to prove. For the second one, we remark that it is a Gaussian process  and it follows that a Brownian Motion $\tilde{W}$ exists such that
   $$\sup_{t\ge1} \left|\int_1^{t} \frac{1}{s^{\frac{1}{2}+\varepsilon'}}dW_s\right|\overset{(d)}{=}\sup_{t\in[0,\sigma_\infty)} |\tilde{W}_t|\quad\textnormal{where}\quad \sigma_\infty=\int_1^{+\infty} \frac{1}{s^{{1+2\varepsilon'}}} ds<+\infty, $$
   and ``$\overset{(d)}{=}$'' stands for the equality in distribution. But
   $$\sup_{t\in[0,\sigma_\infty)} |\tilde{W}_t|\le \sup_{t\in[0,\sigma_\infty)} \tilde{W}_t-\sup_{t\in[0,\sigma_\infty)} (-\tilde{W}_t)$$
   and hence
   $$\e[\sup_{t\in[0,\sigma_\infty)} |\tilde{W}_t|^p]\le 2^p \e[|\sup_{t\in[0,\sigma_\infty)} \tilde{W}_t|^p]\le (2\sqrt{\sigma_\infty})^{p} \e[|{\cal Z}|^p]$$
   since $\sup_{t\in[0,\sigma_\infty)} \tilde{W}_t$ has the same distribution as $\sqrt{\sigma_\infty} |{\cal Z}|$ where ${\cal Z}$ has ${\cal N}(0,1)$-distribution.
   Let us now consider the last term of \eqref{eq:decompwsur}. We have
   $$\sup_{t\ge1}\left|\int_1^{t} \frac{W_s}{s^{\frac{3}{2}+\varepsilon'}} ds\right|\le \int_1^{+\infty} \frac{|W_s|}{s^{\frac{3}{2}+\varepsilon'}} ds.$$
   Hence, by the Jensen inequality applied with the probability measure $\mu_{\varepsilon'}(ds)=Cs^{-(1+\varepsilon')} ds$, we get:
   $$\e\left[\sup_{t\ge1}\left|\int_1^{t} \frac{W_s}{s^{\frac{3}{2}+\varepsilon'}} ds\right|^p\right]\le
   C \int_1^{+\infty} \e\left|\frac{W_s}{\sqrt{s}}\right|^p \frac{1}{s^{1+\varepsilon'}} ds.$$
   Thus, by the scaling property, it follows that
   $$\e\left[\sup_{t\ge1}\left|\int_1^{t} \frac{W_s}{s^{\frac{3}{2}+\varepsilon'}} ds\right|^p\right]\le C\e[|{\cal Z}|^p]$$
   where ${\cal Z}$ has ${\cal N}(0,1)$-distribution.
   \end{proof}
   \begin{Lemma}\label{contmomentinvar} Assume $\HUN$ and $\HDEUX$ and let $\bar{\nu}$ denote the \textit{marginal} invariant distribution. Then, {there exists $C=C(H,L,\alpha,|b(0_{\R^d})|,\|\sigma\|)>0$ such that} for any $p>1$,
   $$ \int_{\R^d} |y|^p \bar{\nu}(dy)\le C^p\e[|{\cal Z}|^p]$$
   where ${\cal Z}$ denotes a random variable with ${\cal N}(0,1)$-distribution.{Furthermore, $(L,\alpha,b_0,s)\mapsto C(H,L,\alpha,b_0,s)$ is bounded on every compact set of $\ER_+\times\ER_+^*\times\ER_+\times\ER_+$.}
   \end{Lemma}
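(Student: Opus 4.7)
The plan is to combine the stationarity of $(X_t)_{t\geq0}$ with the pathwise contraction estimate of Lemma~\ref{lem:conttrajxw}, applied to the pair $(X_0,W^-)\sim\nu$ and to the null initial condition $(0,0)\in\R^d\times\Omega^-$. Fix $\varepsilon\in(0,H)$ and $\varepsilon'\in(0,2-2H)$; Lemma~\ref{lem:conttrajxw} then yields, for every $t\geq0$,
\begin{equation*}
|X_t^{X_0,W^-}-X_t^{0,0}|\le C_1^{1/2}e^{-\alpha t/2}|X_0|+C_2^{1/2}\Phi_{\varepsilon,\varepsilon'}(W^-)^{1/2}(t\vee1)^{H-1+\varepsilon'/2},
\end{equation*}
and the exponent $H-1+\varepsilon'/2$ is strictly negative by our choice of $\varepsilon'$. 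By stationarity, $\e[|X_t^{X_0,W^-}|^p]=\int|y|^p\bar\nu(dy)$ for every $t\geq0$, and Minkowski's inequality therefore gives, after choosing $t=t_*=(2/\alpha)\log(2C_1^{1/2})$ so that $C_1^{1/2}e^{-\alpha t_*/2}\le 1/2$, and assuming the left-hand side is a priori finite, a bound of the form
\begin{equation*}
\left(\int|y|^p\bar\nu(dy)\right)^{1/p}\le 2\,\e\bigl[|X_{t_*}^{0,0}|^p\bigr]^{1/p}+2C_2^{1/2}(t_*\vee1)^{H-1+\varepsilon'/2}\e\bigl[\Phi_{\varepsilon,\varepsilon'}(W^-)^{p/2}\bigr]^{1/p}.
\end{equation*}

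To bound $\e[|X_{t_*}^{0,0}|^p]^{1/p}$, I would set $Y_t=X_t^{0,0}-\sigma B_t^H$ and derive a Gronwall-type estimate by combining $\HUN$ in the form $\langle b(x),x\rangle\le-\alpha|x|^2+|b(0)||x|$, the Lipschitz hypothesis $\HDEUX$ (to control $b(X_t^{0,0})-b(Y_t)$), and the elementary Young inequality $2ab\le\alpha a^2+b^2/\alpha$; this leads to $\tfrac{d}{dt}|Y_t|^2\le-\alpha|Y_t|^2+\alpha^{-1}(|b(0)|+L\|\sigma\||B_t^H|)^2$, and Gronwall yields $|Y_{t_*}|^2\le\alpha^{-1}\int_0^{t_*}e^{-\alpha(t_*-s)}(|b(0)|+L\|\sigma\||B_s^H|)^2\,ds$. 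Taking the $p/2$-th power, applying Jensen's inequality in the probability measure proportional to $e^{-\alpha(t_*-s)}\,ds$ on $[0,t_*]$, and using the Gaussian moments $\e[|B_s^H|^p]=c_H^p s^{pH}\e[|\mathcal{Z}|^p]$ gives, after absorbing $|b(0)|$ into a multiplicative constant via $\e[|\mathcal{Z}|^p]^{1/p}\ge\sqrt{2/\pi}$, a bound $\e[|Y_{t_*}|^p]^{1/p}\le C_3\e[|\mathcal{Z}|^p]^{1/p}$ with $C_3=C_3(H,L,\alpha,|b(0)|,\|\sigma\|)$; adding $\|\sigma\|\e[|B_{t_*}^H|^p]^{1/p}$ produces the analogous bound on $\e[|X_{t_*}^{0,0}|^p]^{1/p}$. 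For the second term, Lemma~\ref{lem:contbrownnorm} applied at exponent $p/2$, combined with the monotonicity of $q\mapsto\e[|\mathcal{Z}|^q]^{1/q}$, gives $\e[\Phi_{\varepsilon,\varepsilon'}(W^-)^{p/2}]^{1/p}\le C_4\e[|\mathcal{Z}|^p]^{1/p}$.

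The main obstacle is justifying a priori that $\int|y|^p\bar\nu(dy)<+\infty$ before rearranging. I would handle this by a truncation argument: define $a_N:=\e[(|X_0|\wedge N)^p]^{1/p}\le N$. The elementary inequality $(a+b)\wedge N\le (a\wedge N)+(b\wedge N)$, combined with $(\tfrac{1}{2}|X_0|+b)\wedge N\le\tfrac{1}{2}(|X_0|\wedge 2N)+b$, turns the comparison argument above at $t=t_*$ into the recursive bound $a_N\le K+\tfrac{1}{2}a_{2N}$ with $K$ independent of $N$. Iterating gives $a_N\le 2K+2^{-M}a_{2^MN}$ for every $M\geq 1$, and this closes (uniformly in $N$) into $a_N\le 2K$ as soon as one has a mild tail control of the form $a_M=O(M^\theta)$ for some $\theta<1$, equivalently the finiteness of some positive-order moment $\e[|X_0|^q]$. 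Such a preliminary moment bound can be obtained either from Hairer's construction of $\bar\nu$ or by running the very same comparison at a small exponent $q$. Monotone convergence then yields $\int|y|^p\bar\nu(dy)=\lim_Na_N^p<+\infty$, and the two bounds of the second paragraph combine to produce the announced inequality. The fact that $C(H,L,\alpha,|b(0)|,\|\sigma\|)$ is bounded on compact sets follows by inspection, since the constants $C_1,C_2$ of Lemma~\ref{lem:conttrajxw} have this property and $t_*$ depends continuously on them.
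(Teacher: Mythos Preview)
Your route is genuinely different from the paper's. The paper does \emph{not} use Lemma~\ref{lem:conttrajxw} or stationarity here; instead it couples $X_t^x$ (deterministic start $x$) with the fractional Ornstein--Uhlenbeck process $U_t^x$ driven by the \emph{same} fBm, obtains
\[
|X_t^x-U_t^x|^2\le C\int_0^te^{\alpha(s-t)}(1+|U_s^x|^2)\,ds,
\]
takes $L^{p/2}$-norms, and then lets $t\to\infty$, using convergence in distribution of both processes to their invariant laws together with the fact that the OU invariant law $\pi_\sigma$ is explicitly Gaussian (Hairer, Prop.~3.12). The crucial point is that $\sup_t\|U_t^x\|_p<\infty$, which makes the limit well-defined without any a priori knowledge of $\bar\nu$-moments.

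Your approach is correct \emph{provided} $\int|y|^p\bar\nu(dy)<\infty$, but the truncation argument you propose to establish this has a real gap. The recursion $a_N\le K+\tfrac12 a_{2N}$ goes the wrong direction: it bounds a smaller truncation level by a larger one, and iterating yields only $a_N\le 2K+2^{-M}a_{2^MN}$; since $a_{2^MN}\le 2^MN$, this is vacuous. To close it you invoke $a_M=O(M^\theta)$ with $\theta<1$, i.e.\ finiteness of some moment of $|X_0|$---but that is exactly what you are trying to prove, and ``running the very same comparison at a small exponent $q$'' reproduces the same one-sided recursion at exponent $q$. The appeal to Hairer's construction is not supported by the reference used in the paper (Prop.~3.12 there is specific to OU). Separately, note that $X_t^{0,0}$ is driven by $Z_t$, not $B_t^H$ (with $w=0$ one has $D_t(0)=0$); this is only a notational slip since $Z_t$ is Gaussian with variance $O(t^{2H})$, but it also explains why one cannot simply take $t\to\infty$ along your route: $\e|X_t^{0,0}|^p$ grows like $t^{pH}$, whereas in the paper's OU comparison the reference process has uniformly bounded moments.

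In short, the idea of exploiting Lemma~\ref{lem:conttrajxw} plus stationarity is natural, but as written it needs an independent source of moment finiteness; the paper's OU comparison is precisely a device that manufactures the bound from scratch, because the reference process has an explicit Gaussian stationary law. If you want to rescue your approach, one clean fix is to replace the stationary $X$ by $X_t^x$ with deterministic $x$ in your contraction estimate and then pass to the limit $t\to\infty$ via Fatou---but for the limit to be finite you would again need a reference process with uniformly bounded moments, which brings you back to the OU comparison.
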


   \begin{Remark} The dependency of $C$ with respect to $H,L,\alpha,b(0_{\R^d})$ and $\sigma$ is explicit and is given in the following proof.
   \end{Remark}

   \begin{proof} The proof uses some arguments of \cite[Proposition 3.12]{hairer} by controlling the distance between the solution to the SDE with the one of a Ornstein-Uhlenbeck process for which the announced property holds. For the sake of completeness, let us give some details. Let $(U_t)_{t\ge0}$ denote a solution to $dU_t=-U_t dt+\sigma dB_t^H$ and $(X_t)_{t\ge0}$ a solution to  \eqref{modeldiffusion}. Assume that $U$ and $X$ are built with the same fBm and start from the same starting point $x$. Then,
   $$ X_t-U_t=\int_0^{t} b(X_s)-U_s ds$$
   so that
   $$ |X_t-U_t|^2=2\int_0^t \langle b(X_s)-U_s, X_s-U_s\rangle ds.$$
   For any $x$ and $u\in\ER^d$,
   $$\langle b(x)-u, x-u\rangle=\langle b(x)-b(u), x-u\rangle+\langle b(u)-u, x-u\rangle.$$
   Now, by $\HUN$ and $\HDEUX$ (which implies that {$|b(u)|\le (|b(0_{\R^d})|\vee L)(1+|u|)$}), we get
   $$\langle b(x)-u, x-u\rangle\le -\frac{\alpha}{2}|x-u|^2+{\frac{(|b(0_{\R^d})|\vee L+1)^2}{\alpha}(1+|u|^2)}.$$
   Thus, with similar arguments as in the proof of Lemma \ref{lem:conttrajxw} (based on Gronwall-type arguments), we deduce that for any $t\ge 0$,
   $$ |X_t-U_t|^2\le {\frac{2(|b(0_{\R^d})|\vee L+1)^2}{\alpha}}\int_0^te^{{\alpha}(s-t)}{(1+|U_s|^2 )}ds.$$
   Thus, denoting by $\|.\|_p$, the $L^p$-norm we deduce from Jensen inequality that
   $$\|X_t^x\|_p^{{p}}\le {2^{p-1}\left(\frac{4(|b(0_{\R^d})|\vee L+1)^2}{\alpha}\times\left(\frac{1}{\alpha}\vee1\right)\right)^{p/2}} \int_0^te^{{\alpha}(s-t)}(1+\|U_s^x\|_p^{{p}} )ds+{2^{p-1}}\|U_t^x\|_p^{{p}}.$$
   Denote by $\bar{\nu}$ and $\pi_\sigma$ the (marginal) invariant distributions of $X$ and $U$. {Owing to uniform integrability arguments and to  the convergence in distribution of $(X_t)_{t\ge0}$ and $(U_t)_{t\ge0}$ towards $\bar{\nu}$ and $\pi_\sigma$, we get:}
   $$\int_{\R^d}|y|^p\bar{\nu}(dy)=\lim_{t\rightarrow+\infty}\|X_t^x\|_p^p\le {\bar{C}}^p\lim_{t\rightarrow+\infty}\|U_t^x\|_p^p
   ={\bar{C}}^p\int_{\R^d}|y|^p\pi_\sigma(dy)$$
   {where $$\bar{C}:=2\left(\frac{4(|b(0_{\R^d})|\vee L+1)^2}{\alpha}\times\left(\frac{1}{\alpha}\vee1\right)\right)^{1/2}+2.$$}
   {Finally, let us recall that by a standard integration by parts,
   $$ U_t=x e^{-t}+\sigma \int_0^t e^{s-t} d B_s^H$$
   and it follows that $\pi_\sigma=\pi_{I_d}\circ \varphi_\sigma^{-1}$ where $\varphi_\sigma=\sigma x.$ Thus,
   $$\int_{\R^d}|y|^p\pi_\sigma(dy)\le \|\sigma\|^p \int |y|^p\pi_{I_d}(dy).$$
But by \cite[Proposition 3.12]{hairer}, $\pi_{Id}$ has Gaussian distribution ${\cal N}(0_{\ER^d},c_0 I_d)$ where $c_0\le \Gamma(2H+1)$, so that the result follows with ${C}=\|\sigma\| (d\Gamma(2H+1))^{\frac{1}{2}}\bar{C}$ (which has the local boundedness property announced in the lemma).}
   \end{proof}

   \noindent We are now in position to provide an exponential bound for $\e[F_X~|~\mathcal{F}_0]-\e[F_X]$:
   \begin{Proposition}\label{prop:controlfo} Assume $\HUN$ and $\HDEUX$. Suppose that $n\Delta_n\ge 1$. Then, for any $\varepsilon'\in(0,1-H)$, a constant {$C=C(H,L,\alpha,b(0_{\R^d}),\sigma,\varepsilon')$} exists such that
   $$\e[\exp\left(\lambda (\e[F_X~|~\mathcal{F}_0]-\e[F_X])\right)]\le \exp\left(\lambda^2 u_0^{(n)}\right),
   $$
   where
   $$u_0^{(n)}=C\|F\|^2_{\rm Lip}\frac{(n\Delta_n)^{H+\varepsilon'}}{\Delta_n}.$$
   In particular, with $\varepsilon'=\frac{1}{2}$ and $\varepsilon'=\frac{1-H}{2}$ when $H<1/2$ and $H>1/2$ respectively,
   $$ u_0^{(n)}\le C\|F\|^2_{\rm Lip}
   \begin{cases}
   n\Delta_n^{-1}&\textnormal{if $H<1/2$,}\\
   n^{H+\frac{1-H}{2}}\Delta_n^{\frac{H-1}{2}}&\textnormal{if $H>1/2$.}
   \end{cases}$$
   \end{Proposition}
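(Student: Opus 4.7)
The plan is to follow the same scheme used throughout this section: obtain a pathwise control, turn it into a $p$-th moment bound, and conclude via Lemma~\ref{lem:subgaussian}, which applies since $\e[F_X|\F_0]-\e[F_X]$ is centered. Starting from \eqref{eq:controlfxfo} and applying Lemma~\ref{lem:conttrajxw} with $t=t_k$, $x=X_0$, $\widetilde{w}=W^-$ (and taking square roots), I would get, for each $k$,
\begin{equation*}
|X_{t_k}^{X_0,W^-}-X_{t_k}^{y,w}|\le \sqrt{C_1}\,e^{-\alpha t_k/2}|X_0-y|+\sqrt{C_2\,\Phi_{\varepsilon,\varepsilon'}(W^--w)}\,(t_k\vee 1)^{H-1+\varepsilon'/2}.
\end{equation*}
Integrating against $\nu(dy,dw)$ and using the subadditivity $\Phi_{\varepsilon,\varepsilon'}(W^--w)\le 2\bigl(\Phi_{\varepsilon,\varepsilon'}(W^-)+\Phi_{\varepsilon,\varepsilon'}(w)\bigr)$ together with the fact that the second marginal of $\nu$ is $\mathproba{P}_{W^-}$, this produces a pathwise bound of the form $\|F\|_{\rm Lip}(A_n U+B_n V)$, where $U$ involves $|X_0|$ and $\int|y|\,\bar{\nu}(dy)$, $V$ involves $\sqrt{\Phi_{\varepsilon,\varepsilon'}(W^-)}$ and its expectation, and
\begin{equation*}
A_n=\sum_{k=1}^n e^{-\alpha t_k/2},\qquad B_n=\sum_{k=1}^n (t_k\vee 1)^{H-1+\varepsilon'/2}.
\end{equation*}

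Next I would estimate these two deterministic sums. A geometric sum argument yields $A_n\lesssim \Delta_n^{-1}$, while splitting $B_n$ at the index $k_0\simeq \Delta_n^{-1}$ and comparing the tail to an integral (permissible since $H-1+\varepsilon'/2>-1$) gives $B_n\lesssim \Delta_n^{-1}+\Delta_n^{H-1+\varepsilon'/2}n^{H+\varepsilon'/2}\lesssim (n\Delta_n)^{H+\varepsilon'/2}/\Delta_n$ under $n\Delta_n\ge 1$. Renaming $\varepsilon'/2$ as $\varepsilon'$ yields precisely the factor of $u_0^{(n)}$ stated in the proposition. For the random parts, the moments of $|X_0|$ are controlled by Lemma~\ref{contmomentinvar} and those of $\sqrt{\Phi_{\varepsilon,\varepsilon'}(W^-)}$ by Lemma~\ref{lem:contbrownnorm}, both being dominated by powers of $\e[|\mathcal{Z}|^p]$, hence by $\zeta^{p/2}p\,\Gamma(p/2)$ for some $\zeta=\zeta(H,L,\alpha,b(0_{\R^d}),\|\sigma\|,\varepsilon')$. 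Combined with the preceding estimate of $B_n$, this provides the $p$-th moment bound required by Lemma~\ref{lem:subgaussian}.

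The exponential bound then follows directly, and the two particular choices $\varepsilon'=1/2$ for $H<1/2$ and $\varepsilon'=(1-H)/2$ for $H>1/2$ simply produce the exponents stated at the end of the proposition. I expect the main technical obstacle to be the careful bookkeeping ensuring that the final constant can be chosen with the local boundedness property in $(L,\alpha,b_0,\|\sigma\|)$ announced in Remark~\ref{rem:theoconcen}, together with the precise estimation of $B_n$ isolating the exact factor $(n\Delta_n)^{H+\varepsilon'}/\Delta_n$; the remaining ingredients are by now routine consequences of the lemmas already established in this subsection.
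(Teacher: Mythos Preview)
Your proposal is correct and follows essentially the same route as the paper: start from \eqref{eq:controlfxfo}, apply Lemma~\ref{lem:conttrajxw} and take square roots, split into an exponentially decaying term controlled via Lemma~\ref{contmomentinvar} and a polynomially decaying term controlled via Lemma~\ref{lem:contbrownnorm}, estimate the sums $A_n$ and $B_n$ exactly as you indicate, and conclude with Lemma~\ref{lem:subgaussian}. The only cosmetic difference is that the paper keeps $\Phi_{\varepsilon,\varepsilon'}(W^{-}-w)$ as a single object integrated against $\mathproba{P}_{W^-}(dw)$ rather than using subadditivity to separate $W^-$ from $w$, and it absorbs the factor $1/2$ on $\varepsilon'$ directly into the statement rather than renaming at the end; neither affects the argument.
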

   \begin{Remark} \label{rem:constant2}{Following carefully the constants involved in the proof below (induced by the previous lemmas), one checks that for every $H\in(0,1)$ and $\varepsilon'\in(0,1-H)$, $(L,\alpha,b_0,s)\mapsto C(H,L,\alpha,b_0,s,\varepsilon')$ is bounded on every compact set of $\ER_+\times\ER_+^*\times\ER_+\times\ER_+$. Thus, since the proof of Theorem \ref{main_thm_concentration} is obtained as a combination of Propositions \ref{prop:exp_moments_sum_martingale_inc} and \ref{prop:controlfo}, this property combined with Remark \ref{rem:constant1} implies that the constant  $\constPV$  of Theorem \ref{main_thm_concentration} has the local boundedness property announced in Remark \ref{rem:theoconcen}.}\end{Remark}

   \begin{Remark} The above bound easily involves that the contribution of $\e[F_X~|~\mathcal{F}_0]-\e[F_X]$ is always less constraining (up to a multiplicative constant) than the one obtained in Proposition \ref{prop:exp_moments_sum_martingale_inc}.
   \end{Remark}
   \begin{proof} By \eqref{eq:controlfxfo} and Lemma \ref{lem:conttrajxw}, for any $p>1$,
   \begin{align*}
   |\e[F_X~|~\mathcal{F}_0&]-\e[F_X]|^p\le (2C_1\|F\|_{\rm Lip})^p\left( \int_{\R^d} |X_0-y|\bar{\nu}(dy)\sum_{k=1}^n e^{-\frac{\alpha }{2} t_k}\right)^p\\
   &+(2C_2\|F\|_{\rm Lip})^p\left(\int\left|\Phi_{\varepsilon,\varepsilon'}(W^{-}-w)\right|^p\mathproba{P}_{W^-}(dw)\right)\left(\sum_{k=1}^n (t_k\vee 1)^{H-1+\varepsilon'}\right)^p
   \end{align*}
   where $\bar{\nu}$ stands for the ``marginal'' invariant distribution, $i.e.$ the projection of $\nu$ on the first coordinate and $\tilde{w}$ denotes the p
   Let us consider the two right-hand side terms separately. On the one hand, using Jensen inequality, we get
   $$\left(\int_{\R^d} |X_0-y|\bar{\nu}(dy)\sum_{k=1}^n e^{-\frac{\alpha }{2} t_k}\right)^p\le \int_{\R^d} |X_0-y|^p\bar{\nu}(dy)\left(\frac{2}{\alpha \Delta_n}\right)^p.$$
   Thus, using that $|X_0-y|^p\le 2^p\left(|X_0|^p+|y|^p\right)$ and Lemma \ref{contmomentinvar}, we get
   \begin{equation}\label{eq:123212}
   \e \left(\int_{\R^d} |X_0-y|\bar{\nu}(dy)\sum_{k=1}^n e^{-\frac{\alpha }{2} t_k}\right)^p\le
   \left(\frac{4}{\alpha \Delta_n}\right)^p C^p \e[|{\cal Z}|^p].
   \end{equation}
   On the other hand, since $H-1+\varepsilon'>-1$,
   $$\sum_{k=1}^n (t_k\vee 1)^{H-1+\varepsilon'}\le C\left(\frac{1}{\Delta_n}+{n^{H+\varepsilon'}}{\Delta_n}^{H-1+\varepsilon'}\right)\le  C\frac{(n\Delta_n)^{H+\varepsilon}}{\Delta_n}. $$
   Furthermore,
   $$\e\int_{\Omega_-}\left|\Phi_{\varepsilon,\varepsilon'}(W^--w)\right|^p\mathproba{P}_{W^-}(dw)\le 2^p \e|\Phi_{\varepsilon,\varepsilon'}(W^-)|^p.$$
   Thus, by Lemma \ref{lem:contbrownnorm}, it follows that
   \begin{equation}\label{eq:12321}
   \e\int_{\Omega_-}\left|\Phi_{\varepsilon,\varepsilon'}(W^--w)\right|^p\mathproba{P}_{W^-}(dw)\left(\sum_{k=1}^n (t_k\vee 1)^{H-1+\varepsilon'}\right)^p\le
   \left(C\frac{(n\Delta_n)^{H+\varepsilon}}{\Delta_n}\right)^p \e[|{\cal Z}|^p].
   \end{equation}
   Combining \eqref{eq:123212}, \eqref{eq:12321} and the fact that
   $\e|{\cal Z}|^p\le C^p \Gamma\left(\frac{p+1}{2}\right)\le C^p p\Gamma\left(\frac{p}{2}\right)$,
   we get: there exists a constant $C>0$ such that for all $p\ge 2$,
   $$\e|\e[F_X~|~\mathcal{F}_0]-\e[F_X]|^p\le \left(C\|F\|_{\rm Lip}\frac{(n\Delta_n)^{H+\varepsilon}}{\Delta_n}\right)^p p\Gamma\left(\frac{p}{2}\right).
   $$
   To conclude, we apply Lemma \ref{lem:subgaussian}.
   \end{proof}

   \section{Proofs of Statistical properties}\label{sec:proofs2}

   \subsection{Proof of Proposition~\ref{prop:bias}}

   \paragraph{Step 1.}
   Below we denote $\x=(t_1,\dotsc,t_d)$ and we define for $h\in\cH$ and $\eta$  such that $\eta_i\in\{0, h_i\}$:
   \begin{equation}
      v_i(u) = (t_1-\eta_1u_1,\ldots,t_{i-1}-\eta_{i-1}u_{i-1},
      \quad t_i\quad,
      t_{i+1}-h_{i+1}u_{i+1}, \ldots, t_d-h_du_d).
   \end{equation}
   We can write:
   \begin{align}
      f(\x-h\cdot u)-f(\x-\eta\cdot u)
      &= \sum_{i=1}^d f(v_i(u)-h_iu_ie_i) - f(v_i(u)-\eta_iu_ie_i)\\
      &= \sum_{i\in I} f(v_i(u)-h_iu_ie_i) - f(v_i(u)),
   \end{align}
   where $I=\{i=1,\ldots,d : \eta_i=0\}$. Now fix $i\in I$.

   If $\lfloor s_i\rfloor = 0$ then we obtain:
   \begin{equation*}
      |f(v_i(u)-h_iu_ie_i) - f(v_i(u)) | \leq L_i |h_iu_i|^{s_i}
   \end{equation*}
   which leads to (since $f\in \Sigma_d(\boldsymbol{s},\bold L)$)
   \begin{equation*}
      \left|\int_\R K(u_i) \left( f(v_i(u)-h_iu_ie_i) - f(v_i(u)) \right) d u_i\right|
      \leq L_i h_i^{s_i} \int_\R \big|u_i^{s_i} K(u_i)\big| du_i.
   \end{equation*}
   Otherwise, using a Taylor expansion of the function $z\in\R\mapsto f(v_i(u)+ze_i)$ around $0$, we obtain:
   \begin{align}
      f(v_i(u)-h_iu_ie_i) - f(v_i(u))
      &=
         {\sum_{k=1}^{\lfloor s_i\rfloor}}
         D_i^k f(v_i(u))\frac{(-h_iu_i)^k}{k!}\\
      &+\frac{(-h_iu_i)^{\lfloor s_i\rfloor}}{\lfloor s_i\rfloor!}
         \left[\frac{\partial^{\lfloor s_i\rfloor}f}{\partial x_i^{\lfloor s_i\rfloor}} (v_i(u)-\tau h_iu_i)-\frac{\partial^{\lfloor s_i\rfloor}f}{\partial x_i^{\lfloor s_i\rfloor}}(v_i(u))\right],
   \end{align}
   where $\tau\in(0,1)$. This implies that, using that $K$ is a kernel of order larger than $\lfloor s_i \rfloor$ combined with the fact that $v_i(u)$ does not depend on $u_i$,
   \begin{equation*}
      \left|\int_\R K(u_i) \left( f(v_i(u)-h_iu_ie_i) - f(v_i(u)) \right) d u_i\right|
      \leq \frac{L_i h_i^{s_i}}{\lfloor s_i \rfloor !} \int_\R \big|u_i^{s_i} K(u_i)\big| du_i.
   \end{equation*}

   Combining the above results we obtain:
   \begin{equation*}
      \left|\int_{\R^d} \prod_{i=1}^d K(u_i) \left(f(\x-h\cdot u)-f(\x-\eta\cdot u)\right) du \right|
      \leq \sum_{i\in I} \frac{L_i h_i^{s_i}}{\lfloor s_i \rfloor !} \int_\R \big|u_i^{s_i} K(u_i)\big| du_i.
   \end{equation*}

   \paragraph{Step 2.}
   Let $h, \kh \in\cH$. Taking $\eta=0$ in step 1 we obtain \eqref{eq:bias}
   Taking $\eta_i = 0$ if $h_i = \kh_i \vee h_i$ and $\eta_i = \kh_i = h_i\vee \kh_i$ otherwise, we obtain:
   \begin{align*}
      \left|\e \hat{f}_{h\vee\kh}(x_0)-\e\hat{f}_{\kh}(x_0)\right|
      &\leq
      \left|\e \hat{f}_{h\vee\kh}(x_0)-\e\hat{f}_{\eta}(x_0)\right|
      + \left|\e \hat{f}_{\eta}(x_0)-\e\hat{f}_{\kh}(x_0)\right|\\
      &\leq 2\sum_{i\in I} \frac{L_i h_i^{s_i}}{\lfloor s_i \rfloor !} \int_\R \big|u_i^{s_i} K(u_i)\big| du_i\\
      &\leq 2\sum_{i=1}^d \frac{L_i h_i^{s_i}}{\lfloor s_i \rfloor !} \int_\R \big|u_i^{s_i} K(u_i)\big| du_i.
   \end{align*}
   This implies \eqref{eq:bias2}.

   \subsection{Proof of Proposition~\ref{prop:stoc}}
   We have
   \begin{equation}
      \e\left|\hat f_h(\xx) - \e\hat f_h(\xx)\right|^p
      =\int_0^\infty \p\left(\left|\hat f_h(\xx) - \e\hat f_h(\xx)\right|^p>t\right)dt.
   \end{equation}
   We now use Corollary~\ref{cor:concentration_inequality} with the functional $g(u)=K_h (x_0-u)$. We obtain that
   \begin{align}
      \e\left|\hat f_h(\xx) - \e\hat f_h(\xx)\right|^p & = p\int_0^\infty u^{p-1} \p\left(\left|\hat f_h(\xx) - \e\hat f_h(\xx)\right| >u\right)du\\
      &\le  2p\int_0^\infty u^{p-1} \exp\left(-\frac{u^2 (n\Delta)^{\betaH}}{4\constPV\|g\|^2_{\Lip}}\right)du\\
      &=  2p\left(\frac{4\constPV\|g\|_{\Lip}^2}{(n\Delta_n)^{\betaH}}\right)^{p/2} \int_0^{+\infty} u^{p-1} \exp\left( -u^2 \right) du\\
      &= p\Gamma\left(\frac{p+1}{2}\right) \left(\frac{4\constPV\|g\|_{\Lip}^2}{(n\Delta_n)^{\betaH}}\right)^{p/2}\\
      &= p\Gamma\left(\frac{p+1}{2}\right)
      \constPV^{p/2}
      \left(\frac{4\|g\|_{\Lip}^2}{(n\Delta_n)^{\betaH}}\right)^{p/2}
   \end{align}

   Since $\|g\|_{\Lip} \leq \sqrt{d}\LK V_h^{-1}$, we obtain \eqref{eq:stoc}.

   \subsection{Proof of oracle inequality}

   We split the proof of Theorem~\ref{thm:oracle} into several steps.\\

   \paragraph{Step 1.} Let $h\in\cH$ be an arbitrary bandwidth. Using triangular inequality we have:
   \begin{equation}
      |\hat{f}(x_0)-f(x_0)|\le |\hat{f}_{\hat{h}}(x_0)-\hat{f}_{h\vee\hat{h}}(x_0)|+|\hat{f}_{h\vee\hat{h}}(x_0)-\hat{f}_{h}(x_0)|+|\hat{f}_{h}(x_0)-f(x_0)|.
   \end{equation}
   Note that
   \begin{align}
      |\hat{f}_{\hat{h}}(x_0)-\hat{f}_{h\vee\hat{h}}(x_0)|
      &\leq \left\{|\hat{f}_{\hat{h}}(x_0)-\hat{f}_{h\vee\hat{h}}(x_0)|- M_n(h,\hat h)\right\}_+ + M_n(h,\hat h)\\
      &\leq \max_{\kh\in\cH}\left\{|\hat{f}_{\kh}(x_0)-\hat{f}_{h\vee\kh}(x_0)|-M_n(h,\kh)\right\}_+ + M_n(h,\hat h)\\
      &\leq B(h,\x) + M_n(\hat h) + M_n(h).
   \end{align}
   Applying the same reasoning to the term $|\hat{f}_{h\vee\hat{h}}(x_0)-\hat{f}_{h}(x_0)|$ and using \eqref{eq:selection-rule}, this leads to
   \begin{align*}
      |\hat{f}(x_0)-f(x_0)|
      &\leq B(h,x_0)+ 2 M_n(h)
      + B(\hat h,x_0) + 2 M_n(\hat h)
      +|\hat{f}_{h}(x_0)-f(x_0)|\\
      &\leq 3 B(h,x_0)+ 4 M_n(h)
      +|\hat{f}_{h}(x_0)-f(x_0)|
   \end{align*}
   This implies that:
   \[
   R_n(\hat{f},f) \leq
   3 \left(\e B^p(h, \x)\right)^{1/p} +
   4 M_n(h) +
   R_n(\hat f_h, f)
   \]

   \paragraph{Step 2.}
   Now, we upper bound $B(h,\x)$. Using basic inequalities we have:
   \begin{align}
      B(h,\x)
      &\leq
      \max_{\kh\in\cH}
      \left\{
      \left|\e \hat{f}_{h\vee\kh}(x_0)-\e\hat{f}_{\kh}(x_0)\right|
      \right\}_+ +
      \max_{\kh\in\cH}
      \left\{
      \left|\hat{f}_{\kh}(x_0)-\e\hat{f}_{\kh}(x_0)\right|
      -{M}_n(\kh)\right\}_+\\
      &\qquad +
      \max_{\kh\in\cH}
      \left\{
      \left|\hat{f}_{h\vee\kh}(x_0)-\e \hat{f}_{h\vee\kh}(x_0)\right|
      -{M}_n(h\vee\kh)\right\}_+  \\
      &\leq E_h(\x) + 2 T,
   \end{align}
   where
   \begin{equation}
      T=\max_{\kh\in\cH}
      \left\{
      \left|\hat{f}_{\kh}(x_0)-\e\hat{f}_{\kh}(x_0)\right|
      -{M}_n(\kh)\right\}_+.
   \end{equation}

   This leads to:
   \begin{equation}\label{eq:step3}
      \left(\e B^p(h,\x)\right)^{1/p} \leq E_h(\x)+2\left(\e T^p\right)^{1/p}.
   \end{equation}

   \paragraph{Step 3.}

   We have:
   \begin{align*}
      \e T^p
      &\le \sum_{\kh\in\cH}\int_0^{\infty} \p
      \left(\left|\frac1n \sum_{i=1}^n \bar g_\kh(X_{t_i})\right|\ge M_n(\kh)+t^{1/p}\right)dt,
   \end{align*}
   where
   \[
   g_\kh(X_{t_i}) = K_\kh(\x-X_{t_i})
   \qquad\text{and}\qquad
   \bar g_\kh(X_{t_i}) = g_\kh(X_{t_i}) - \e g_\kh(X_{t_i}).
   \]
   We obtain, using Corollary~\ref{cor:concentration_inequality}:
   \begin{align*}
      \e T^p
      &\leq p\sum_{\kh\in\cH}\int_0^{\infty} u^{p-1}\p
      \left(\left|\frac1n \sum_{i=1}^n \bar g_\kh(X_{t_i})\right|\ge M_n(\kh)+u\right)du \\
      &\leq  2p\sum_{\kh\in\cH}\int_0^\infty
      u^{p-1}\exp\left(-\frac{(u+M_n(\kh))^2 (n\Delta_n)^{\betaH}}{4\constPV\|g_\kh\|^2_{\Lip}}\right)du\\
      &\leq 2p\sum_{\kh\in\cH}\int_0^\infty
      u^{p-1}\exp\left(-\frac1{\constPV}\left(\frac{u+M_n(\kh)}{\varphi_n(\kh)}\right)^2\right)du\\
      &\leq 2p\sum_{\kh\in\cH}\int_0^\infty
      u^{p-1}\exp\left(-{\frac1{\constPV}}\left(\frac{u}{\varphi_n(\kh)}\right)^2\right)
      \exp\left( -p\frac{\kkk}{\constPV}|\log V_\kh|  \right)
      du
   \end{align*}
   Since $\kkk>\constPV$ we obtain:
   \begin{align*}
      \e T^p&\leq p\Gamma\left( \frac{p+1}{2} \right) \sum_{\kh\in\cH} \left(\constPV^{1/2}\varphi_n(\kh) V_\kh\right)^{p}
     {V_\kh^{p\left( \frac{\kkk}{\constPV}-1 \right)}}\\
      &\leq p\Gamma\left( \frac{p+1}{2} \right) (4d\LK^2)^{p/2}
      \left(\frac{\constPV}{(n\Delta_n)^{\beta{H}}}\right)^{p/2}{\sum_{\kh\in\cH}V_\kh^{p\left( \frac{\kkk}{\constPV}-1 \right)}}\\
      &\leq  p{\left( \frac{e^{p\left( \frac{\kkk}{\constPV}-1 \right)}}{e^{p\left( \frac{\kkk}{\constPV}-1 \right)}-1} \right)^d}\Gamma\left( \frac{p+1}{2} \right) (4d\LK^2)^{p/2}
      \left(\frac{\constPV}{(n\Delta_n)^{\beta{H}}}\right)^{p/2}
   \end{align*}
   Finally we obtain the following upper bound:
   \begin{equation*}
      \left( \e T^p \right)^{1/p}
      \leq 2\LK\sqrt{d} \left( p\Gamma\left( \frac{p+1}{2} \right)
      {\left(
      \frac{e^{p\left( \frac{\kkk}{\constPV}-1 \right)}}{e^{p\left( \frac{\kkk}{\constPV}-1 \right)}-1}
      \right)^d}
      \right)^{1/p}
      \times \left(\frac{\constPV}{(n\Delta_n)^{\beta{H}}}\right)^{1/2}.
   \end{equation*}
   This allows us to obtain Theorem~\ref{thm:oracle}.

   \subsection{Proof of Theorem~\ref{thm:adaptive}}

   Set $\mathbf{s}\in(0,M+1]^d$, $\mathbf{L}\in(0, +\infty)^d$.
   {To prove this result, we construct a specific bandwidth vector $h^*$ that belongs to $\cH$. This allows to apply Propositions~\ref{prop:bias} and~\ref{prop:stoc} and to bound, in~\eqref{eq:thm-oracle}, the minimum over $h\in\cH$ by the value for $h=h^*$}.
   Let
   \begin{equation*}
      l_i^* = {\left\lfloor \frac{\gamma(\mathbf{s})}{s_i}
      \left(
         \log\left( (n\Delta_n)^{\betaH} \right)
         -
         \log \log\left((n\Delta_n)^{\betaH} \right)
      \right)
      \right\rfloor}.
   \end{equation*}
   Since $\gamma(\mathbf{s})/s_i \leq 1/2$ we have
   \begin{equation}\label{eq:th4-1}
      0\leq l_i^* \leq  \frac12
      \log\left( {(n\Delta_n)^{\betaH}} \right)
      \leq \frac{\betaH}2
      \log\left( n\Delta_n \right).
   \end{equation}
   Now, denote for $i=1,\dotsc, d$:
   \begin{equation*}
      h_i^* = e^{-l_i^*}
      \quad\text{and}\quad
      h_i(\mathbf{s}) = \left({(n\Delta_n)^{-\betaH}}\log\left({(n\Delta_n)^{\betaH}}\right)\right)^{\gamma(\mathbf{s})/s_i}.
   \end{equation*}
   Remark that, using these notations we have $h_i(\mathbf{s}) \leq h_i^* \leq e h_i(\mathbf{s})$.  If we consider $h^*=(h^*_1,\dotsc, h^*_d)$ and $h(\mathrm s) = (h_1(\mathrm s), \dotsc, h_d(\mathrm s))$, then:
   \begin{align*}
      V_{h^*}
      &\geq V_{h(\mathbf{s})} \\
      &\geq
      \left((n\Delta_n)^{-\betaH}\log\left({(n\Delta_n)^{\betaH}}\right)\right)^{\gamma(\mathbf{s})(1/\bar s + 1/s_\mathrm{min})}\\
      &\geq
      \left((n\Delta_n)^{-\betaH}\right)^{\gamma(\mathbf{s})(1/\bar s + 1/s_\mathrm{min})}\\
      &= \left((n\Delta_n)^{-\betaH}\right)^{1/2-\gamma(\mathbf{s})}\\
      &\geq (n\Delta_n)^{-\betaH/2} ,
   \end{align*}
   where $s_\mathrm{min} = \min_j s_j$ and using that $(n\Delta_n)^{\beta_H}\ge e$. This implies, in combination with~\eqref{eq:th4-1}, that $h^*\in\cH$. In~\eqref{eq:thm-oracle} we can bound the right hand side by taking $h=h^*$. Let us consider each term separately.

   First, using Propositions~\ref{prop:bias} and~\ref{prop:stoc}, since $0<s_i\leq M+1$ for each $i$, we have
   \begin{align}
      R_n(\hat{f}_{h^*}, f)
      \leq &
      \left(e^{M+1}\Lambda_1 + \frac{\Lambda_2}{(\log((n\Delta_n)^\betaH))^{1/2}}\right)
   \left(\frac{\log((n\Delta_n)^\betaH)}{(n\Delta_n)^{\betaH}}\right)^{\gamma(\boldsymbol{s})}\\
   \leq & \left(e^{M+1}\Lambda_1 + \Lambda_2\right)
   \left(\frac{\log((n\Delta_n)^\betaH)}{(n\Delta_n)^{\betaH}}\right)^{\gamma(\boldsymbol{s})} .
   \end{align}
   Secondly, using Proposition~\ref{prop:bias} we obtain:
   \begin{equation*}
      E_{h^*}(\xx) \leq
      2e^{M+1}\Lambda_1
      \left(\frac{\log((n\Delta_n)^\betaH)}{(n\Delta_n)^{\betaH}}\right)^{\gamma(\boldsymbol{s})}.
   \end{equation*}
   Finally, using~\eqref{eq:Vhvarphi} and~\eqref{eq:MNHH} we have:
   \begin{align*}
      M_n(h^*) &= \left(\frac{4d\LK^2}{V_{h^*}^2(n\Delta_n)^{\betaH}}\right)^{1/2}\sqrt{\kkk p |\log V_{h^*}|}\\
      &\leq 2\sqrt{\kkk d p}\LK   \left(\frac{1}{V_{h(\mathbf{s})}^2(n\Delta_n)^{\betaH}}\right)^{1/2}   \left( |\log (e^{d+1}V_{h(\mathbf{s})})| \right)^{1/2}\\
      & \leq 2\sqrt{\kkk d p}\LK \left(\frac{\log((n\Delta_n)^\betaH)}{(n\Delta_n)^{\betaH}}\right)^{\gamma(\boldsymbol{s})}  \left( \frac{|\log (e^{d+1}V_{h(\mathbf{s})})|}{ \log((n\Delta_n)^\betaH) } \right)^{1/2}\\
       & \leq 2\sqrt{\kkk d p}\LK \left(\frac{\log((n\Delta_n)^\betaH)}{(n\Delta_n)^{\betaH}}\right)^{\gamma(\boldsymbol{s})}  \left( \frac{d+1+(1/2-\gamma(\boldsymbol{s}))  \log((n\Delta_n)^\betaH) }{ \log((n\Delta_n)^\betaH) } \right)^{1/2}\\
       & \leq 2\sqrt{\kkk d(d+3/2) p}\LK \left(\frac{\log((n\Delta_n)^\betaH)}{(n\Delta_n)^{\betaH}}\right)^{\gamma(\boldsymbol{s})}
   \end{align*}
   This allows us to conclude that $\min_{h \in\cH} \left\{
            R_n(\hat{f}_h, f) +
            4 M_n(h)
            + 3 E_h(\x)
         \right\}$ is bounded up to a multiplicative constant by $\left(\frac{\log((n\Delta_n)^\betaH)}{(n\Delta_n)^{\betaH}}\right)^{\gamma(\boldsymbol{s})}$. To conclude, only note that $(n\Delta_n)^{-\beta_H/2}\le(n\Delta_n)^{-\gamma(\boldsymbol{s})\beta_H} $ since $\gamma(\boldsymbol{s})\le 1/2$.\\

\noindent \textbf{Acknowledgements.}
   The authors have been supported by Fondecyt projects 1171335 and 1190801, and Mathamsud 19-MATH-06 and 20-MATH-05.

   \bibliographystyle{apalike}
   \bibliography{bib1}

   \end{document}

8\sqrt{\kkk d(d+3/2) p}\LK +7e^{M+1}\Lambda_1+ \Lambda_2+C_0\sqrt{\kc}